\documentclass[11pt]{amsart}
\usepackage{amsmath, amsthm, amssymb, amsfonts, verbatim,color}

\usepackage[english]{babel}

\usepackage{wrapfig}

\usepackage{float}

\usepackage{graphics}

\usepackage[pdftex]{graphicx}
\usepackage[bookmarks]{hyperref}
\addtolength{\topmargin}{-.25in}
\setlength{\textwidth}{6in}       
\setlength{\oddsidemargin}{.25in}              
\setlength{\evensidemargin}{.25in}         
\setlength{\textheight}{8.5in}

\reversemarginpar            

\definecolor{darkgreen}{rgb}{0,0.55,0}

\newtheorem{theorem}{Theorem}[section]

\newtheorem{lemma}[theorem]{Lemma}
\newtheorem{prop}[theorem]{Proposition}
\theoremstyle{definition}

\theoremstyle{remark}

\numberwithin{equation}{section}
\numberwithin{theorem}{section}

\newcommand{\R}{{\mathbb{R}}}
\newcommand{\tep}{t_0}
\newcommand{\C}{{\mathbb{C}}}
\newcommand{\N}{{\mathbb{N}}}
\newcommand{\uH}{{\underline{H}}}
\newcommand{\uM}{{\underline{M}}}

\newcommand{\dist}{{\operatorname{dist}}}
\newcommand{\supp}{{\operatorname{supp}}}

\newcommand{\calO}{{\mathcal{O}}}

\newcommand{\calN}{{\mathcal{N}}}
\newcommand{\lam}{{\lambda,m}}
\newcommand{\Ktwo}{\mathcal J_2}


\newcommand{\hex }{h_{ex}}

\def\be{\begin{equation}}
\def\ee{\end{equation}}

\def\ep{\epsilon}
\def\vp{\varphi}

\def\calA{\mathcal A}

\begin{document}
\title[Stable configurations with unbounded vorticity for $2$d Ginzburg-Landau]{Local minimizers with unbounded vorticity for the $2$d Ginzburg-Landau functional.}

\author{Andres Contreras \and Robert L. Jerrard  }
\address{Department of Mathematical Sciences, New Mexico State University, Las Cruces, New Mexico, USA}\email{acontre@nmsu.edu}
\address{Department of Mathematics, University of Toronto,
Toronto, Ontario, Canada}\email{rjerrard@math.toronto.edu}

\date{\today}
\maketitle

\begin{abstract}
A central focus of Ginzburg-Landau theory is the understanding 
and characterization of vortex configurations.  On a bounded domain 
$\Omega\subseteq \R^2,$ global minimizers, and critical states in general, of the corresponding energy functional have been studied thoroughly in the limit $\ep\to 0,$ where $\ep>0$ is the inverse of the Ginzburg-Landau parameter. The presence of an applied magnetic field  of strength $\hex\gg 1$ makes possible the existence of stable vortex states. A notable open problem is whether there are solutions of the Ginzburg-Landau equation for any number of vortices below $ h_{ex} \vert \Omega\vert /2 \pi,$ for external fields of up to super-heating field strength. The best earlier partial results give, for every $0<c<1,$ and $K>0,$ the existence of local minimizers of the Ginzburg-Landau functional with a prescribed number of vortices in the range $1 \leq N \leq \min \{ K \vert \log \ep \vert, c ( h_{ex} \vert \Omega\vert /2 \pi ) \}$  and  for values of $1\ll_\ep h_{ex}$  smaller than a power of the Ginzburg-Landau parameter.

In this paper, we prove that there are constants $K_1, \alpha>0$ such that given natural numbers satisfying
\[1\leq N \leq \frac{\hex}{2\pi}(|\Omega|-h_{ex}^{-1/4}),\]
local minimizers of the Ginzburg-Landau functional with this many vortices exist, for fields such that $K_1\leq \hex \leq 1/\ep^{\alpha}.$ Our strategy consists in combining: the minimization over a subset of configurations for which we can obtain a very precise localization of vortices; expansion of the energy in terms of a modified vortex interaction energy that allows for a reduction to a potential theory problem; and a quantitative vortex separation result for admissible configurations. 
Our results provide detailed information about the vorticity and refined asymptotics of the local minimizers that we construct.
\end{abstract}

\section{introduction }

Let $\Omega$ be a bounded, open, simply-connected
subset of $\R^2$ with smooth boundary.
Given $(u,A)\in H^1(\Omega;\C)\times H^1(\Omega;\R^2)$, we define
the Ginzburg-Landau functional
\[
GL_\epsilon(u,A) := \frac 12 \int_\Omega
 |(\nabla - iA)u|^2 + |\nabla\times A - \hex |^2 + \frac {(1-|u|^2)^2}{2\epsilon^2}.
\]
Quantized vortices, described below in detail, are a prominent qualitative feature
of {a large class of} critical points of {$GL_\ep$}, relevant to both
physical phenomena 
and certain problems of a geometric flavour. The influential work \cite{BBH} characterizes minimizers of a simplified version without magnetic field,  where vortices emerge as a result of imposed topologically non-trivial boundary conditions. Later, this work was extended to a problem contemplating magnetic influences in \cite{BR}. In a series of works, starting with \cite{Serf1} and \cite{Serf2}, continuing with \cite{SS1,SS2} the monograph \cite{SandSerfbook} (and references therein) and culminating in \cite{SandSerfCMP}, the vortex structure of global minimizers of the full model has been described in great detail for a wide range of values of the external field $\hex$. It is known that minimizers transition from a vortex-less state to one where a specific number of vortices is preferred, as the external field increases over a threshold called the \textit{first critical field}. On the high end of {strengths} of applied fields considered {in \cite{SandSerfCMP}}, the optimal number of vortices, which diverges as $\ep \to 0,$ and their asymptotic distribution is obtained at main order.

A satisfactory picture of the moduli space of solutions to the Ginzburg-Landau equations should not only characterize global minimizers but also other stable equilibria. In $2$d Ginzburg-Landau, the existence of branches of solutions with a prescribed number of vortices (different from those present in a global minimizer) in a range determined by the capacity of the applied field to contain them, is a known conjecture. This phenomenon is a mathematical manifestation of the expected hysteretic properties of vortex (and vortex-less) configurations as noted in \cite{Sixlectures, Riviere}. Stable vortex states were obtained  in \cite{DuLin} below the first critical field. On the other hand, it was noted as early as \cite{Serf1, Serf2} that local minimizers with a fixed number of vortices exist for applied fields near the first critical field; these results were extended in \cite{Stable} in particular considering fields in a much larger interval $1\ll h_{ex} \ll 1/\ep^{s}, 0<s<1/2 .$ In \cite{SandSerfbook}, the authors obtain for the first time local minimizers with a possibly divergent number of vortices, although $N\ll C \vert \log \ep\vert^{1/2}$ and close to the highest allowed numbers, these solutions exist for a limited range of external fields (smaller than any power of $1/\ep$). In the list of open questions in \cite{SandSerfbook}, it is asked to extend the results about branches of stable solutions in chapter $11,$ to a larger set of choices of numbers of vortices and applied fields. The work \cite{CS} partially addresses this question and proves, in particular, the existence of solutions with vortices up to $N\sim\vert \log \ep \vert$ for fields sufficiently larger than the first critical field. {The ranges obtained in \cite{CS} improve on previous constructions considerably, however they are still far from establishing the folklore problem about local branches of mimizers with prescribed vorticity and in fact they do not cover a noticeable portion of the expected range:
\[K_1\ll\hex \ll \frac 1\ep \mbox{ and }1 \le N \le N^*(\hex),\]
where $K_1$ is some, possibly large, number and $N^*(\hex)$ is the maximum expected number of vortices that can be contained by a field of strength $h_{ex}.$ The maximum allowed vorticity is believed to be $N^*(\hex)=\frac{h_{ex}|\Omega|}{2\pi}$ based on a free boundary problem associated to the corresponding mean field model \cite{ChapRubSchatz, SandSerfCMP,CS} for $N\to \infty$ vortices. The condition $\hex \ll \frac 1\ep$ comes from the knowledge that the Meissner (vortex-less) solution is stable for fields of these strengths; the strength of the field for which the Meissner solution loses its stability is known as super-heating field.
}

Most of the above mentioned works also give information about the location of vortices in terms of a renormalized energy or averaged versions of it. The work \cite{SandSerfCMP} introduces the \textit{Coulombian renormalized energy} and global minimizers studied there and the local minimizers found in \cite{CS} assort their vortices so as to asymptotically minimize this energy. Understanding reduced models for a divergent number of vortices in this and other related equations is of great interest \cite{ChapRubSchatz, KurSpi, JSp3, SerfMF}; in the case of $2$d Ginzburg-Landau this interest is partially motivated by connections to problems of crystallization \cite{SandSerfCMP, PetSerf}. In all instances of problems where one has to deal with very large vorticities, the analysis becomes very technically difficult, and these challenges are partly responsible for the lack of progress in the problem of obtaining stable vortex configurations with very large number of vortices.

{
The constructions of local minimizers in the earlier works \cite{SandSerfbook, Serf1, Serf2, Stable} rely on two elements:
\begin{itemize}
\item Roughly speaking, the energy contribution of a vortex for $E_\ep$ (defined in \eqref{Eep.def} below) is $\pi \vert \log \ep \vert ,$ while the energy associated with interaction between vortices scales like
$O(1)\times(\# \mbox{ of pairs of vortices})= O(N^2) .$ The admissible class of functions is chosen so that the energy contribution to $E_\ep$ due to interactions between vortices is known up to $C \vert \log \ep \vert,$  for some $C>0$ sufficiently small. 
\item An energy lower bound for an $N$ vortex configuration of the form
\begin{equation}
GL_\ep (u,A) \geq a_0 h^2_{ex}+\pi N \vert \log \ep\vert+ a_1 N^2+ a_2 N + error (N),
\label{intro1}\end{equation}
for certain {\em explicit} constants $a_0, a_1, a_2$ depending on $\Omega,
\hex$ and  (in a mild way) on $N$ itself.
\end{itemize}
The idea is that the total vorticity of minimizers in the class can be prescribed because their energy is compatible only with the desired vorticity; this is why accurate knowledge of the error is essential. In \cite{SandSerfbook} $error (N)=o(N^2),$ whence the restriction $N^2 \ll \vert \log \ep \vert . $
In \cite{CS}, the authors exploit
lower bounds involving the Coulombian renormalized energy that follow from 
results and techniques in \cite{SandSerfCMP}. The improved lower bounds yield an expansion \eqref{intro1} (for different constants $a_1, a_2$) where $error (N)=o(N).$ 
Given that the range of $N$'s was to be extended to values much higher than $\vert \log \ep\vert ^{1/2},$ the first element can not be combined with the lower bound to prescribe the vorticity as in \cite{SandSerfbook, Serf1, Serf2, Stable}, although a lower bound for the vorticity is available. Instead, the authors of  \cite{CS} devise a new approach whereby a new admissible class allows to bound the total vorticity from above indirectly. Even then the result can only cover a range of $N$'s where the error does not exceed the cost of a vortex.}

In this paper we develop a new strategy that allows us to find
local minimizers of $GL_\ep$ for much larger numbers $N = N_\ep$ of vortices
and applied magnetic field $\hex = h_{ex,\ep} $.
We will always assume that 
\be\label{scaling1}
0<\ep< \ep_0,  \hspace{8em} K_1 \le  \hex \le k_1 \ep^{-1/4},
\ee
and
\be\label{scaling2}
1\le N \le 
\min
\left\{ \frac{\hex}{2\pi}(|\Omega|-\hex^{-1/4}) , 
k_2\ep^{-1/10}\hex^{-1/5} \right\}
\ee
where the constants, fixed below, depend only on $\Omega .$
(In general we write $k_j, K_j$ to denote small and large constants,  and
we always assume that $k_j\le 1 \le K_j$.)
In particular, for $K_1 \le \hex \le k_2' \ep^{-1/12}$,  the entire range $1\le N \le \frac{\hex}{2\pi}(|\Omega|-\hex^{-1/4})$ is included. 
No technical adaptation of earlier arguments seems likely to be of use in this {whole}
range. The key new elements in our approach are:

\begin{itemize}
\item[ 1.] The set over which we minimize prescribes the number of vortices directly: we work with functions $u$ whose vorticity (see \eqref{def.vorticity} below) is close to $\pi \sum_{i=1}^N \delta_{a_i}$
where $a = (a_1,\ldots, a_n)$  is an approximate constrained minimizer of a renormalized energy $H_\ep^N$, defined in \eqref{Hep.newdef}.
\item[ 2.] We derive lower bounds in terms of the renormalized energy.
A similar renormalized energy has appeared before in \cite{SandSerfbook}.
Here, drawing on \cite{JSp2}, we rigorously justify the renormalized energy for very large values of $N$ and $h_{ex}.$ A drawback of these expressions is that the renormalized energy $H_\ep^N$ tends to $-\infty$ as vortices approach the boundary. (In particular, $H_\ep^N$ does not attain its infimum.) 
Moreover,
it loses accuracy as vortices approach the boundary or as any pair of vortices approach 
each other. These considerations give rise to the constraints to which we have alluded above
on the configurations $(a_1,\ldots, a_n)$ that we consider.
\item[ 3.] 
A major advantage of our approach is that, unlike earlier works, 
we do not require\footnote{although such an expansion could presumably be
derived {\em a posteriori} from our results.} an energy expansion of the form \eqref{intro1}. 
But to handle the difficulties mentioned above,
we need {\em a priori} lower bounds for $\min_i \dist(a_i,\partial \Omega)$ and
$\min_{i\ne j}|a_i-a_j|$, when $a= (a_1, \ldots, a_n)$ is an approximate constrained
minimizer of $H_\ep^N$. We also need to show that the ``vorticity
close to $\pi \sum_{i=1}^N \delta_{a_i}$" condition in point 1. above can be improved
for $(u,A)$ minimizing $GL_\ep$ in the admissible class.
To do these, we introduce a modification of the renormalized energy that let us obtain minimizers of this energy in terms of an obstacle problem. From here we can study deviations in almost optimal configurations via a ``screened'' problem. We do this by means of a quantitative version of an argument used in \cite{RNS}, for a similar problem.
\end{itemize}

\subsection{Main result}
To formulate our results, we need some definitions. 
First, let
$G  = G(x,y)$ be the Green's function defined by
\begin{equation}
-\Delta_xG + G =  \delta_y\mbox{ for }x\in\Omega,\qquad\qquad
G(x,y)= 0\mbox{ for }x\in \partial \Omega.
\label{G.def}\end{equation}
We let $S(\cdot, \cdot)$  denote the regular part of $G$, defined by
\begin{equation}
S(x,y) =  2\pi G(x,y) + \log|x-y|.
\label{S.def}\end{equation}
We define
\begin{align}
E_\ep(u) &:= \int_{\Omega} \frac{|\nabla u|^2}2 + \frac{(|u|^2-1)^2}{4\ep^2},
\label{Eep.def}
\\
F(\xi) &:= \frac 12 \int_\Omega |\nabla\xi|^2 + (\xi + 1)^2\,dx \, .
\label{F.def}
\end{align}
We will always write $\xi_0$ to denote the (unique) minimizer of $F$ in $H^1_0(\Omega)$.
Thus $\xi_0$ satisfies
\be\label{xi0.def}
(-\Delta+1)\xi_0 = -1\mbox{ in }\Omega, \qquad \xi_0 = 0\mbox{ on }\partial \Omega.
\ee
For $N\in \N$ and $a = (a_1,\ldots, a_N)\in \Omega^N$, we define the renormalized energy 
\begin{equation}
H_\ep^N(a) =   \sum_{i=1}^N \big[2\pi \hex \xi_0(a_i)  + \pi S(a_i,a_i) \big]+ 2\pi^2 \sum_{i\ne j} G(a_i,a_j),
\label{Hep.newdef}\end{equation}
where we set $G(a,a)=+\infty$ for $a\in \Omega$.
We will see that this approximately characterizes the least possible energy of a pair $(u,A)$ with
vortices near $(a_1,\ldots, a_N)$, up to a constant that depends on $\ep, N$ and $\hex$
but not on vortex locations.

We will always restrict our attention to pairs
$(u,A)$ such that
\be
\nabla \cdot A = 0 \mbox{ in }\Omega, \qquad A\cdot \nu = 0\mbox{ on }\partial \Omega.
\label{coulomb}\ee
holds; in view of the basic gauge invariance property of the Ginzburg-Landau
functional (see for example \cite{SandSerfbook},  Section 2.1.3) , this does not
entail any loss of generality.
Recalling that $\Omega$ is simply connected, we can then write 
\begin{equation}
A = \nabla^\perp B\quad\quad\mbox{ for some (unique) }B\in H^2\cap H^1_0(\Omega) \, .
\label{B.def}\end{equation}
We will write $B = (\nabla^\perp)^{-1}A$ when \eqref{B.def} holds, without indicating the role of the boundary
conditions.

Define
\begin{align}
M_{\ep, N}
&:= \{ a = (a_1,\ldots, a_N) \in \Omega^N : \dist(a_i, \partial \Omega) \ge  \hex^{-1/3}  \},
\label{Mepn.def}
\\
M^*_{\ep, N}
&:= \{ a = (a_1,\ldots, a_N)\in M_{\ep, N} : H_\ep^N(a) \le \min_{M_{\ep,N}} H_\ep^N + \tep\}\ .
\label{Mepnstar.def}\end{align}
for some positive $\tep$, to be chosen below (in the proof of Lemma \ref{L.uMepNstar}).
Thus, configurations in $M^*_{\ep ,N}$ nearly minimize the renormalized energy 
$H_\ep^N$, subject to the constraint that no $a_i$ is too close to $\partial \Omega$.

Given $(u,A)$ satisfying \eqref{coulomb},
writing  $u = u^1+iu^2 \cong (u^1,u^2)$, 
we define the associated {\em vorticity}, denoted $Ju$, by
\be\label{def.vorticity} 
 Ju:= \det \nabla u = \partial_1 u^1\partial_2 u^2-\partial_2 u^1\partial_1 u^2 .
\ee 

A pair $(u,A)$ is thus interpreted as having vortices near $a\in \Omega^N$ if
\be
A\mbox{ satisfies }\eqref{coulomb},  \ \ 
\mbox{ and }\   
\| Ju - \pi \sum_{i=1}^N \delta_{a_i}\|_{\dot W^{-1,1}}\le \sigma_\ep
\label{wherearethey}
\ee
for some small $\sigma_\ep$. 
This means roughly that the vorticity  $Ju$ is strongly
concentrated in a union $\cup_{i=1}^N B_{r_i}(a_i)$ with $\sum r_i\le\sigma_\ep$,
and with $\int_{B_{r_i}(a_i)}Ju \approx \pi$ for every $i$.

In particular, we are interested in the set of pairs $(u,A)$ with vortices
very close to a configuration $a = (a_1,\ldots, a_N)$
that is a near-minimizer of $H_\ep^N$. Thus,  we define
\be\label{calA.def}
\calA_\ep^N := \{ (u,A)\in H^1(\Omega;\C)\times H^1(\Omega;\R^2) :  \ \exists\, a\in M_{\ep,N}^*\mbox{ such that  \eqref{wherearethey} holds} \}
\ee
for a choice of $\sigma_\ep$ to be specified later; see \eqref{sigmaep.def}.
It is a standard fact \footnote{Take a minimizing sequence in $\calA_\ep^N$. By using exactly the
argument to prove existence of unconstrained minimizers for 2d Ginzburg-Landau 
(see for example Proposition 3.5 in section 3.1.5 in \cite{SandSerfbook}), one can extract a subsequence that converges weakly in $H^1\times
H^1$ to a limit, with the energy of the limit bounded by
$\inf_{\calA_\ep^N}GL_\ep$. We thus only need to prove that
the limit belongs to $\calA_\ep^N$, and this follows from weak
continuity properties of the Jacobian, together with the fact that
$M_{\ep, N}^*$ is a closed set.}
 that  $GL_\ep$ attains its minimum in $\calA_\ep^N .$ 
Our main result is

\begin{theorem}
Assume that \eqref{scaling1}, \eqref{scaling2} hold, and let
$(u_\ep, A_\ep)$ minimize $GL_\ep$ in $\calA^N_\ep$,
for $\sigma_\ep$ defined in \eqref{sigmaep.def},
which in particular implies that $\sigma_\ep \le C\ep^{1/2}$.

Then 
$(u_\ep, A_\ep)$ belongs to the interior of $\calA^N_\ep$.
As a result is a  $(u_\ep, A_\ep)$ is a local minimizer of $GL_\ep$,
and hence a solution of the Ginzburg-Landau equations.
\label{T.main}\end{theorem}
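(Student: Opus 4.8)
The plan is to show that a minimizer $(u_\ep, A_\ep)$ of $GL_\ep$ over $\calA_\ep^N$ cannot have its vorticity sitting on the boundary of the constraint set $\calA_\ep^N$, so that it is in fact an unconstrained local minimizer. The constraint set $\calA_\ep^N$ is cut out by two conditions: the existence of a configuration $a \in M_{\ep,N}^*$ satisfying the quantitative concentration estimate \eqref{wherearethey} with the small parameter $\sigma_\ep$. Accordingly, the argument splits into two parts: first, one must show that for the minimizing pair the optimal approximating configuration $a$ lies in the \emph{interior} of $M_{\ep,N}^*$ — that is, $\dist(a_i,\partial\Omega)$ is strictly larger than $\hex^{-1/3}$ for all $i$, and $H_\ep^N(a) < \min_{M_{\ep,N}} H_\ep^N + \tep$ strictly. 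Second, one must show that the concentration estimate holds with strict inequality, i.e. $\| Ju_\ep - \pi\sum_i \delta_{a_i}\|_{\dot W^{-1,1}} < \sigma_\ep$. Together these give an open neighborhood of $(u_\ep, A_\ep)$ inside $\calA_\ep^N$, and then minimality over $\calA_\ep^N$ upgrades to local minimality over the full space; being a local minimizer of the (smooth, gauge-fixed) functional, it solves the Ginzburg-Landau equations.

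The first part rests on the \emph{a priori} vortex-separation and interior-localization estimates advertised in point 3 of the introduction: for any approximate constrained minimizer $a \in M_{\ep,N}^*$, one has quantitative lower bounds $\min_i \dist(a_i,\partial\Omega) \ge c_1 \gg \hex^{-1/3}$ and $\min_{i\ne j}|a_i - a_j| \ge c_2$, obtained via the obstacle-problem reformulation of the modified renormalized energy and the screened-problem perturbation argument modeled on \cite{RNS}. Because these bounds beat the defining threshold $\hex^{-1/3}$ of $M_{\ep,N}$ by a definite margin, the constraint $\dist(a_i,\partial\Omega)\ge \hex^{-1/3}$ is inactive, so the distance part of $M_{\ep,N}^*$ is interior. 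For the energy inequality $H_\ep^N(a)\le \min H_\ep^N + \tep$: one shows that the set of true constrained minimizers is nonempty and that, by continuity of $H_\ep^N$ on the compact region where vortices stay away from $\partial\Omega$ and from each other, any configuration strictly within this region and close enough to a minimizer satisfies the inequality strictly; since the minimizing $(u_\ep,A_\ep)$ forces its associated $a$ into precisely this well-separated interior region (by the separation estimates just quoted), $a$ inherits strict inequality. Equivalently, one can argue directly that $H_\ep^N(a_\ep) = \min_{M_{\ep,N}} H_\ep^N + o(\tep)$, using the lower bound in terms of $H_\ep^N$ from point 2 together with the fact that a near-minimizer of $H_\ep^N$ can be realized by an admissible pair with energy $GL_\ep$ matching the lower bound up to the $\ep,N,\hex$-dependent constant.

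For the second part, one uses the energy lower bound in terms of $H_\ep^N$ (the rigorous justification of the renormalized energy for large $N,\hex$, following \cite{JSp2}) together with an upper bound obtained by a test-configuration construction: building an explicit $(u,A)$ with vorticity \emph{exactly} a sum of $\pi\delta_{a_i}$'s around a minimizing $a$ and energy within the expected additive constant of the lower bound. Comparing, the minimizer $(u_\ep,A_\ep)$ must nearly saturate the lower bound, and the excess energy controls, via the same vorticity-estimate machinery that produces \eqref{wherearethey} in the first place, the quantity $\| Ju_\ep - \pi\sum_i \delta_{a_i}\|_{\dot W^{-1,1}}$; one arranges the constants so this is bounded by, say, $\frac12 \sigma_\ep$, strictly less than $\sigma_\ep$. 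Here the explicit choice of $\sigma_\ep$ in \eqref{sigmaep.def} (and the claimed $\sigma_\ep \le C\ep^{1/2}$) is exactly what makes the slack available.

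The main obstacle is the second part combined with the interior-in-energy claim: one needs the vorticity of the constrained minimizer to be controlled \emph{strictly better} than the threshold $\sigma_\ep$, and one needs the associated configuration to be a near-minimizer of $H_\ep^N$ with slack strictly less than $\tep$ — both require sharp two-sided matching of $GL_\ep(u_\ep,A_\ep)$ with $\min_{M_{\ep,N}} H_\ep^N$ up to errors that are small relative to $\sigma_\ep$ and $\tep$ respectively, uniformly over the whole scaling range \eqref{scaling1}–\eqref{scaling2}. Propagating the renormalized-energy expansion to this precision when $N$ may be as large as $\sim \hex|\Omega|/2\pi$ and $\hex$ as large as $\ep^{-1/4}$ is delicate, and it is precisely where the quantitative separation estimates (preventing loss of accuracy from near-collisions or near-boundary vortices) and the obstacle/screened-problem analysis do the essential work.
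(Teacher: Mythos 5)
You correctly identify the overall architecture: the obstacle-problem-based separation estimates (Proposition \ref{main.prop}), the upper bound by a test construction, the lower bound in terms of $H_\ep^N$, and the localization result that converts small excess energy into sharper vorticity concentration. At that level your plan matches the paper.

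However, there is a genuine gap in the way you close the argument, concerning \emph{which} configuration witnesses membership in $\calA_\ep^N$ with strict slack. You argue that the distance $\|Ju_\ep - \pi\sum_i\delta_{a_i}\|_{\dot W^{-1,1}}$ is controlled to be $\le \frac12\sigma_\ep$ for the \emph{same} configuration $a=a_\ep$ that comes from the definition of $\calA_\ep^N$. The localization machinery does not deliver this, and it cannot be expected: membership in $\calA_\ep^N$ only guarantees $\|Ju_\ep - \pi\sum_i\delta_{a_{\ep,i}}\| \le \sigma_\ep$, and the actual vortex cores of $u_\ep$ can sit anywhere within that tolerance and need not lie within $\frac12\sigma_\ep$ of the $a_{\ep,i}$. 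What the excess-energy localization (Proposition \ref{P.localization}, following \cite{JSp2}) provides is a \emph{new} configuration $\xi=(\xi_1,\ldots,\xi_N)$, near but in general distinct from $a_\ep$ (with $|\xi_i - a_i|$ allowed to be as large as $\rho_a/(2C_2N^4)\gg\sigma_\ep$), such that $\|Ju_\ep - \pi\sum_i\delta_{\xi_i}\| \le \frac12\sigma_\ep$. One then carries the further and essential burden of showing $\xi\in M_{\ep,N}^*$: first one checks $\xi\in M_{\ep,N}$, and then one applies the lower bound \eqref{pmt1} \emph{with $\xi$} rather than $a_\ep$ to get $GL_\ep(u_\ep,A_\ep)\ge H_\ep^N(\xi)+\kappa_\ep^{GL}-\tep/3$, which combined with the upper bound \eqref{pmt1a} forces $H_\ep^N(\xi)\le \min_{M_{\ep,N}}H_\ep^N+\frac23\tep$. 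Only then does Proposition \ref{main.prop} apply to $\xi$ to yield the separation and interior-distance estimates, and the strict inequalities close. Your proposal conflates $\xi$ with $a_\ep$ (and your continuity argument for the strict energy inequality tacitly assumes $a_\ep$ is close to a true minimizer, which is itself a consequence of, not a substitute for, the lower-bound-with-$\xi$ step). This passage to the new $\xi$ and its subsequent verification as an admissible near-minimizer is exactly where the two-sided energy expansion and the obstacle-problem estimates do their joint work, and without it the argument does not close.
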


This shows both that there exists a local minimizer with $N$ vortices, and
that the vortices are located near points found by minimizing the renormalized  energy.

Theorem \ref{T.main} is the first result showing the existence of local minimizers of $GL_\ep$ with a number of vortices much larger than $\vert \log \ep \vert ,$ going all the way up to $\ep^{-\alpha}$ for some positive $\alpha .$ For fields smaller than $\ep^{-1/12},$ our result settles the conjecture about local minimizers covering the full range of $N$'s. Above this range, that is for $\ep^{-1/12}<h_{ex}<\ep^{-1/4},$ Theorem \ref{T.main} also greatly extends the previous best known partial result \cite{CS} in two directions: the strength of the field is allowed to be as large as $\ep^{-1/4}$ which is much larger than the $\ep^{-1/7}$ in \cite{CS}, and also the number of vortices is still allowed to get as big as $\ep^{-1/12}$ in this range of fields.

Our proof also yields a great deal of information about the local minimizers that we construct.
We show that their vortices are approximated with extreme precision by sums of point
masses at points that {asymptotically minimize $H_\ep^N$}. We also describe {their energy up to errors of order $o(1).$}  For our local minimizers, our results would in principle make it possible to derive explicit estimates in terms of the Coulombian renormalized energy by directly studying the simple discrete energy $H_\ep^N ;$ this would allow to bypass the delicate mass displacement results used in \cite{SandSerfCMP}. We believe that our results may also have some implications for global minimizers, at least when $\hex$ is not too large, but we do not explore that here.

\begin{minipage}{0.45\linewidth}
\begin{figure}[H]
\centering
\includegraphics[trim = 0mm 0mm 0mm 0mm, clip, width = 7cm]{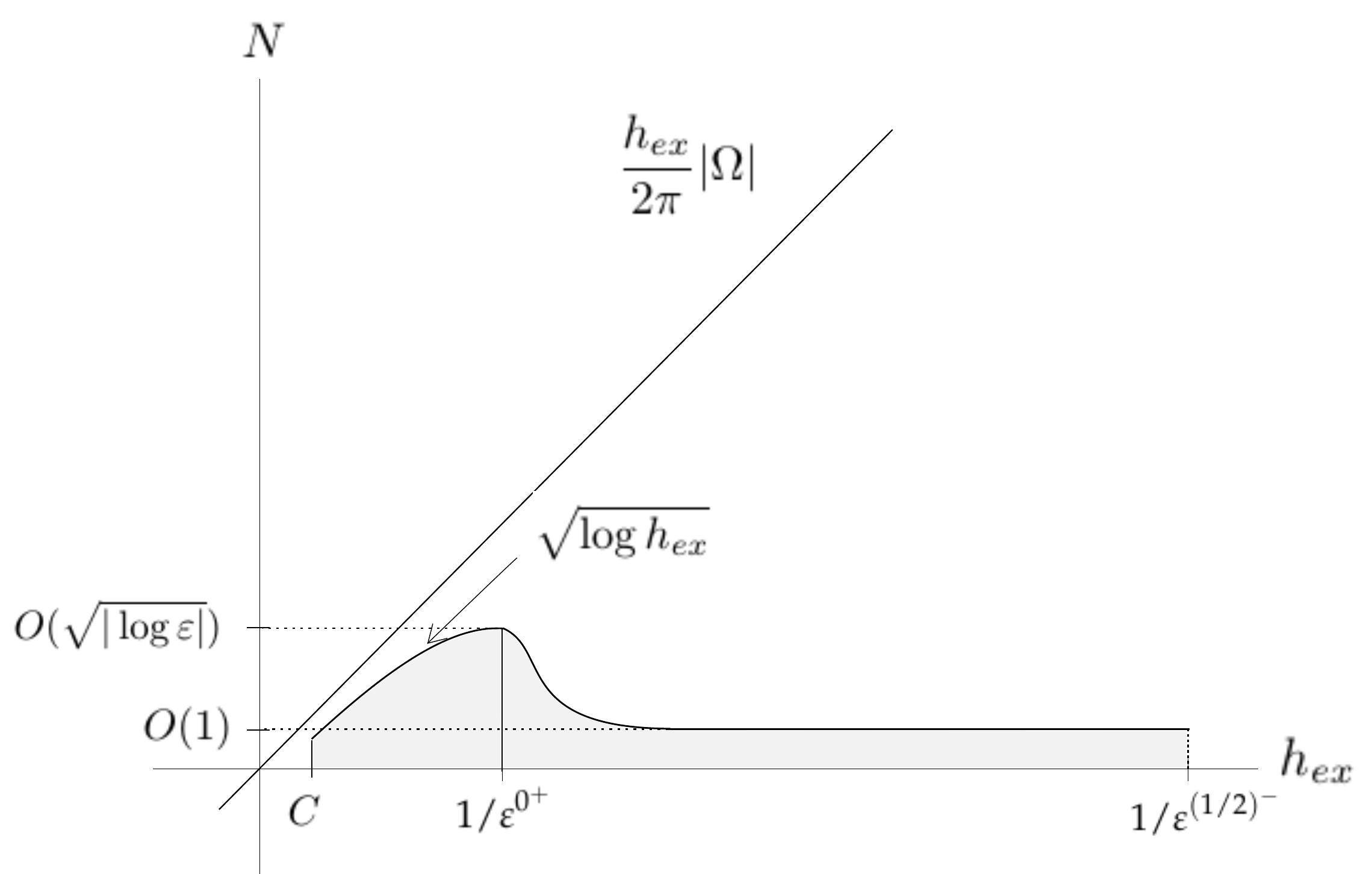}
\caption{\footnotesize Ranges of $N$ and $h_{ex}$ covered in \cite{SandSerfbook, Serf1, Serf2, Stable}.}
\end{figure}
\end{minipage}
\begin{minipage}{0.5\linewidth}
\begin{figure}[H]
\centering
\includegraphics[trim = 0mm 0mm 0mm 0mm, clip, width = 7cm]{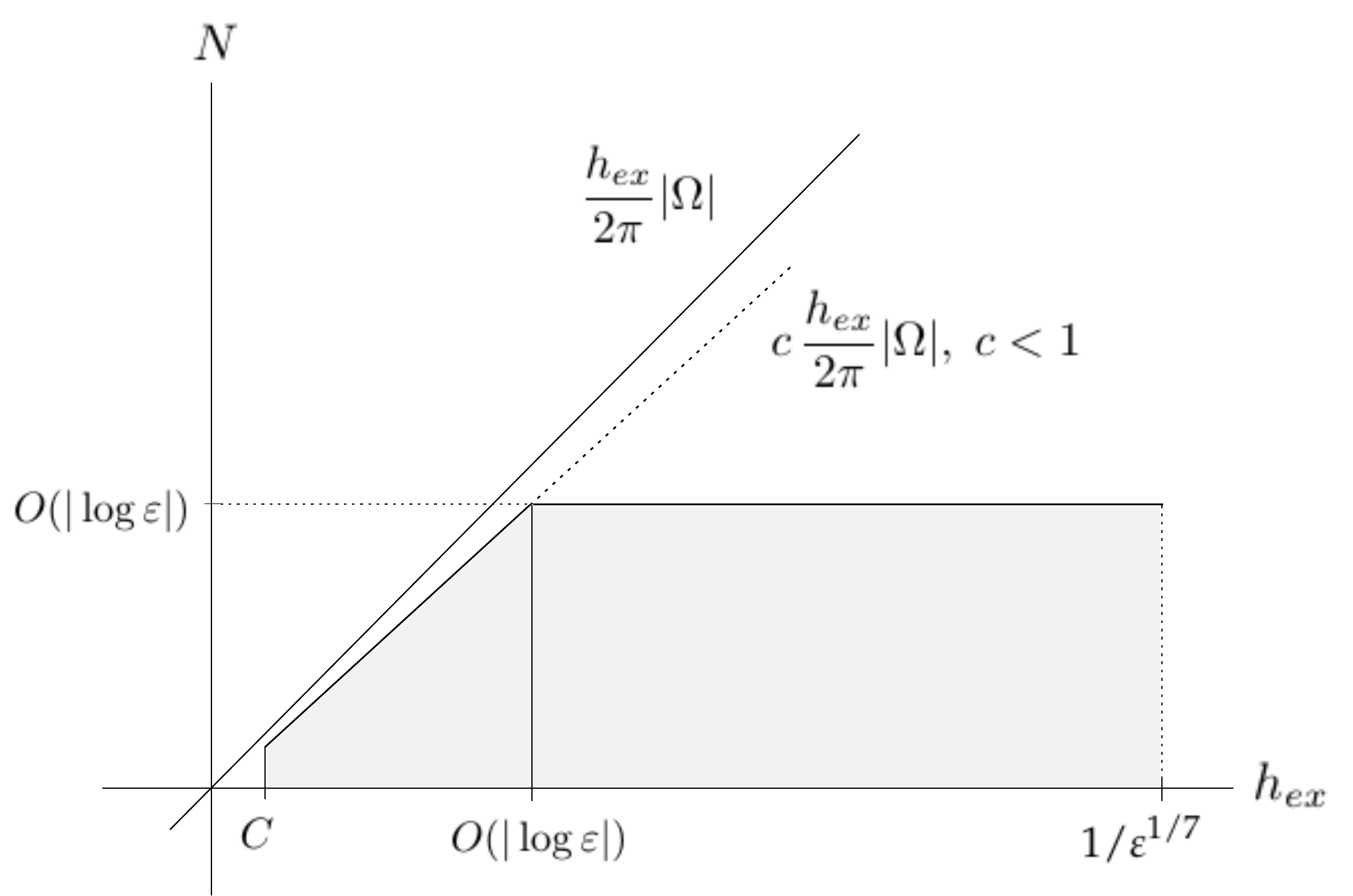}
\caption{\footnotesize Ranges covered in \cite{CS}.}
\end{figure}
\end{minipage}

\begin{figure}[H]
\centering
\includegraphics[trim = 0mm 0mm 0mm 0mm, clip, width = 7cm]{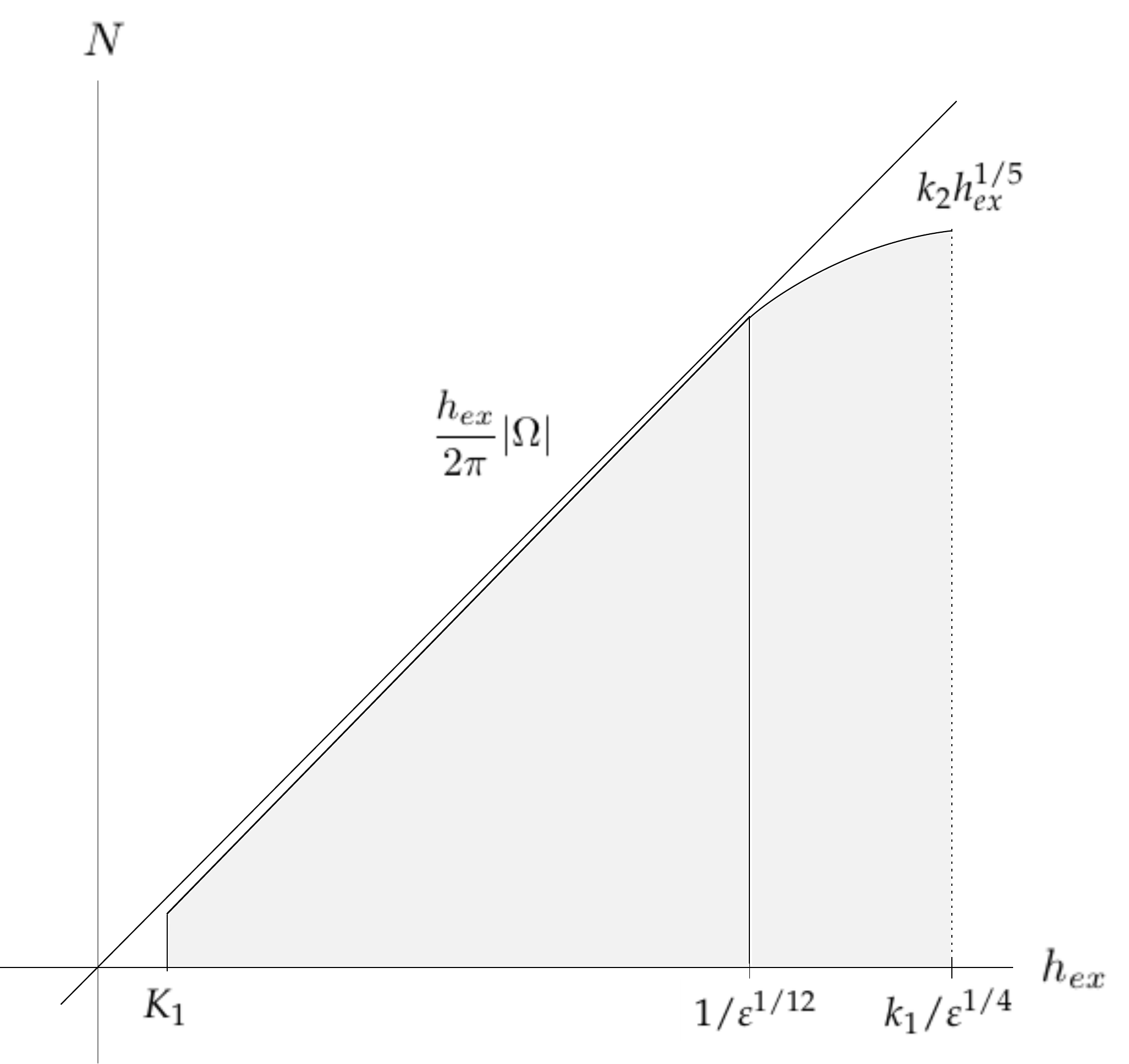}
\caption{\footnotesize Vorticity and external fields in Theorem \ref{T.main}.}
\end{figure}

The organization of the paper is as follows. First, in Section \ref{S-T.main} we present the proof of Theorem \ref{T.main}, assuming various facts that are proved in the remainder of the paper.
 In Section \ref{smp} we introduce a modification of $H_\ep^N$ and study properties of near minimizers of this modification via an auxiliary screened problem. The localization results and corresponding lower bounds are proved in Section \ref{sec:3}. Finally, in Section \ref{EM.Localization:UB, LB}, the upper and lower bounds for minimizers in $\calA^N_\ep$ together with the improved localization of vortices are collected, concluding the proof of Theorem \ref{T.main}.
\vskip.3in
{\it \bf Acknowledgments}
The work of A.C was partially supported by was partially supported by a grant from the Simons
Foundation \# 426318.  The work of R.J. was partially supported by the Natural Sciences and Engineering Research Council of Canada under operating Grant 261955. A.C. wishes to thank S. Serfaty for useful discussions.

\section{Proof of Theorem \ref{T.main}}\label{S-T.main}
In this section we first describe the ingredients in our analysis, and we then
show how these elements combine naturally to yield the proof of our main result.
In doing so, we give a more detailed account of the overall strategy.

\subsection{Ingredients in the proof}

\subsubsection{Interior near-minimizers of the renormalized energy}

The following result
provides information about points in  $M_{\ep,N}^*$, that is,
near-minimizers $a=(a_1,\ldots, a_N)$ of $H_\ep^N$, subject to
the constraint that every $a_j$ stays a certain distance from $\partial \Omega$.
\begin{prop}
Assume that \eqref{scaling1} holds and that
\be\label{scaling2a}
1\le N \le 
\frac{\hex}{2\pi}(|\Omega|-\hex^{-1/4}) 
\ee
(which is implied by \eqref{scaling2}).
Then there exists  $c_0, c_1>0$, depending on
$\Omega$,
such that 
every 
$a = (a_1,\ldots, a_N)\in M^*_{\ep, N}$ satisfies
\begin{align}
\dist(a_i,\partial \Omega)& \ge  c_0\hex^{-1/4} &\mbox{ for all i},
\label{mp.contain}\\
|a_i - a_j| &\ge  c_1\hex^{-1/2}&\mbox{ for all }i\ne j.
\label{mp.sep}
\end{align}
\label{main.prop}\end{prop}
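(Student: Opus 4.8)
The plan is to compare $H_\ep^N$ with its ``mean field'' relaxation, use the associated obstacle problem to locate vortices to leading order, and then read off the two inequalities \eqref{mp.contain}--\eqref{mp.sep} from a convexity (``screened'') estimate.

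\textbf{Reduction to a mean-field energy.}
Fix a smearing scale $\eta$ equal to a small negative power of $\hex$ and, given $a\in M_{\ep,N}$, replace each $\delta_{a_i}$ by the uniform probability measure $\nu_{a_i}$ on $B_\eta(a_i)$; set $\tilde\mu_a:=\sum_i\nu_{a_i}$ and $h_{\tilde\mu_a}:=2\pi G*\tilde\mu_a\in H^1_0(\Omega)$, so that $(-\Delta+1)h_{\tilde\mu_a}=2\pi\tilde\mu_a$. Splitting the resulting double integral into diagonal and off-diagonal parts and using \eqref{G.def}--\eqref{S.def} and \eqref{xi0.def} (as in \cite{JSp2}) gives
\[
H_\ep^N(a)\;=\;\mathcal E(\tilde\mu_a)\;+\;\pi N(\log\eta-c_*)\;+\;\mathrm{err}_\eta(a),
\qquad
\mathcal E(\mu):=2\pi^2\!\iint G\,d\mu\,d\mu\;+\;2\pi\hex\!\int\xi_0\,d\mu,
\]
with $c_*$ universal and $\mathrm{err}_\eta(a)=o(1)$ uniformly on $M_{\ep,N}$ once $\eta$ is small (this uses $\dist(a_i,\partial\Omega)\ge\hex^{-1/3}$ to bound the $C^2$-norm of $x\mapsto S(x,x)$, together with $N\le\hex|\Omega|/2\pi$; a crude preliminary separation $\min_{i\ne j}|a_i-a_j|\ge\hex^{-K}$ valid on $M^*_{\ep,N}$, obtained first from $H_\ep^N(a)<\infty$, handles the off-diagonal terms). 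Note that $\mathcal E$, unlike $H_\ep^N$, is finite and well-behaved up to $\partial\Omega$; this is the ``modified renormalized energy''.

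\textbf{The obstacle problem and its free boundary.}
The quadratic functional $h_\mu\mapsto\mathcal E(\mu)$ is strictly convex, and its minimizer over $\{\mu\ge0,\ \mu(\Omega)=N\}$ is $\mu_*=\tfrac{\hex+\lambda}{2\pi}\mathbf 1_{\{\mu_*>0\}}$, where the coincidence set $\{\mu_*>0\}$ and the multiplier $\lambda$ solve the obstacle problem $h_{\mu_*}+\hex\xi_0\ge\lambda$ in $\Omega$, with equality on $\{\mu_*>0\}$. Since $\mu_*=0$ near $\partial\Omega$ one gets $\lambda\le0$, and $\mu_*(\Omega)=N=\tfrac{\hex}{2\pi}(|\Omega|-\hex^{-1/4})$ forces $|\{\mu_*=0\}|\approx\hex^{-1/4}$; classical obstacle-problem estimates, using $\xi_0\sim-\dist(\cdot,\partial\Omega)$ near $\partial\Omega$ and the $C^1$ fit across the free boundary, then show that $\{\mu_*=0\}$ is a boundary layer of width $\asymp\hex^{-1/4}$ and that $-\lambda$ is bounded below by a positive power of $\hex$ (heuristically $-\lambda\asymp\hex^{1/2}$, and in any case $-\lambda\ll\hex^{3/4}$). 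Hence $\phi:=h_{\mu_*}+\hex\xi_0-\lambda\ge0$ vanishes on $\{\mu_*>0\}$, equals $-\lambda$ on $\partial\Omega$, is subharmonic where positive, and therefore satisfies $\phi\ge c(-\lambda)$ on $\{\dist(\cdot,\partial\Omega)\le c_0\hex^{-1/4}\}$ for suitable $c,c_0>0$.

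\textbf{The screened inequality and the two bounds.}
For the quadratic $\mathcal E$ one has the exact identity $\mathcal E(\tilde\mu_a)-\mathcal E(\mu_*)=\tfrac12\|h_{\tilde\mu_a}-h_{\mu_*}\|_{H^1_0}^2+2\pi\!\int\phi\,d\tilde\mu_a$, where the linear term equals $D\mathcal E(h_{\mu_*})[h_{\tilde\mu_a}-h_{\mu_*}]=2\pi\!\int\phi\,d(\tilde\mu_a-\mu_*)=2\pi\!\int\phi\,d\tilde\mu_a\ge0$ (the $\mu_*$-contribution drops since $\phi\equiv0$ on $\supp\mu_*$, and $\int d(\tilde\mu_a-\mu_*)=0$). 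Expanding the $H^1_0$-norm and isolating the $\eta$-divergent self-energy $\pi N(-\log\eta)$, the remainder is, up to $o(1)$, the Coulombian renormalized energy $\mathcal W(a)\ge\mathcal W_{\min}$ of the configuration relative to $\mu_*$, whose quantitative lower bound and logarithmic blow-up as two vortices collide are furnished by the ball-construction estimates of \cite{SandSerfCMP}. Plugging into the reduction step, the $\eta$-dependent constants cancel and $H_\ep^N(a)=\mathcal E(\mu_*)-\pi N c_*+\mathcal W(a)+2\pi\!\int\phi\,d\tilde\mu_a+o(1)$, so $\min_{M_{\ep,N}}H_\ep^N=\mathcal E(\mu_*)-\pi N c_*+\mathcal W_{\min}+o(1)$ and, for every $a\in M^*_{\ep,N}$,
\[
\bigl(\mathcal W(a)-\mathcal W_{\min}\bigr)\;+\;2\pi\!\int\phi\,d\tilde\mu_a\;\le\;\tep+o(1).
\]
As both summands are nonnegative, each is $\le\tep+o(1)$. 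If some $a_i$ had $\dist(a_i,\partial\Omega)<c_0\hex^{-1/4}$ then, the smeared mass near $a_i$ being $\approx1$, $\int\phi\,d\tilde\mu_a\gtrsim\phi(a_i)\gtrsim-\lambda$, which exceeds $\tep+o(1)$ — contradiction, giving \eqref{mp.contain}. If $r:=\min_{i\ne j}|a_i-a_j|$ satisfies $r\hex^{1/2}\to0$, the renormalized-energy bound gives $\mathcal W(a)-\mathcal W_{\min}\gtrsim-\log(r\hex^{1/2})$, so $-\log(r\hex^{1/2})\lesssim\tep+o(1)$ and hence $r\ge c_1\hex^{-1/2}$, giving \eqref{mp.sep}. (One chooses $\tep$ — deferred in the statement — small enough that $\tep+o(1)$ is $o(1)$ and $\ll -\lambda$.)

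\textbf{Expected main difficulty.}
The crux is the second step: the fine description of the obstacle-problem minimizer $\mu_*$ near its free boundary — in particular the lower bound on $-\lambda$ and the placement of the coincidence set in an $\asymp\hex^{-1/4}$ layer — together with the quantitative ``screening'' (in the spirit of \cite{RNS}) needed to pass from the global renormalized-energy bound to the pointwise separation \eqref{mp.sep}. The smearing bookkeeping, the convexity identity, and the renormalized-energy lower bounds are comparatively routine, modulo the preliminary crude separation estimate.
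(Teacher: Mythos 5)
Your overall architecture — reduce $H_\ep^N$ to a quadratic mean-field energy, identify the leading-order distribution through an obstacle problem, and exploit convexity to split the excess energy into two nonnegative pieces — shares one key ingredient with the paper (the obstacle problem of Lemma \ref{L.obs}). But the mechanism you use to extract \eqref{mp.contain}--\eqref{mp.sep} is genuinely different and, as written, contains gaps that are not merely ``routine.''

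The paper's argument (Lemma \ref{L.uMepNstar}) is a Lieb-style screening estimate à la \cite{RNS}: it moves a \emph{single} vortex $a_1\mapsto\tilde a_1$, writes the resulting change in $\uH_\ep^N$ as $4\pi^2 N\big([\zeta_\lambda+U](a_1)-[\zeta_\lambda+U](\tilde a_1)\big)$ for a local potential $U$, uses a sign/flux argument for $U$ plus the barrier estimates to get $\zeta_\lambda(a_1)\le\min\zeta_\lambda+\tep/(4\pi^2N)$, and for separation splits $U=U_{near}+U_{far}$ and uses the logarithmic growth of $U_{near}$ near $a_2$ with the maximum principle for $U_{far}$. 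None of this invokes the Coulombian renormalized energy or its lower bounds. Your argument instead compares the whole empirical measure $\tilde\mu_a$ to the obstacle minimizer $\mu_*$ via a global quadratic identity and then appeals to lower bounds on $\mathcal W(a)-\mathcal W_{\min}$. That is precisely the \cite{SandSerfCMP} ``mass displacement'' machinery the paper explicitly says it is designed to \emph{bypass}, and the needed quantitative statement $\mathcal W(a)-\mathcal W_{\min}\gtrsim-\log(r\hex^{1/2})$ for all $1\le N\le\hex|\Omega|/2\pi$ (with uniform constants, and with a usable handle on $\mathcal W_{\min}$) is a substantial result that you assert rather than prove; it is not a ``comparatively routine'' ball-construction estimate.

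There are also two more concrete gaps. First, your obstacle-problem claims — coincidence set avoiding a boundary layer of width $\asymp\hex^{-1/4}$, and $-\lambda\asymp\hex^{1/2}$ — are only correct when $N$ is near the top of the allowed range, i.e.\ when $|\Omega|-2\pi N/\hex\asymp\hex^{-1/4}$. For smaller $N$ (e.g.\ $N=1$, or more generally $N\lesssim\hex/\lambda_0$), $\{\mu_*=0\}$ occupies a fixed fraction of $\Omega$ and the free-boundary scale is $O(1)$, not $\hex^{-1/4}$; the paper treats this as a separate case (\eqref{obstacle2a}), with different barriers. Your derivation of ``$\phi\ge c(-\lambda)$ on $\{d\le c_0\hex^{-1/4}\}$'' must be redone there. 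Second, the preliminary separation bound $\min_{i\ne j}|a_i-a_j|\ge\hex^{-K}$ that you invoke ``from $H_\ep^N(a)<\infty$'' is not available: finiteness of $H_\ep^N$ gives only strict positivity, and even combining $H_\ep^N(a)\le\min_{M_{\ep,N}}H_\ep^N+\tep\le\tep$ with $G\ge0$, the crude bound you can read off is $|a_i-a_j|\ge e^{-C\hex^2}$, which is far from polynomial. This matters: the smearing scale $\eta$ must sit below $\min_{i\ne j}|a_i-a_j|$, and with only an exponentially small a priori bound, $\pi N\log\eta$ and the off-diagonal error $\mathrm{err}_\eta(a)$ are not controlled; a genuine bootstrap is needed here, which you do not supply. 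The paper avoids all of this by never smearing and never comparing to $\mathcal W_{\min}$: it only compares $\uH_\ep^N$ at two configurations differing in one coordinate.
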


The proof, which we present in Section \ref{smp},  uses ideas from \cite{SandSerfbook, RNS}.

As mentioned earlier the full conjecture about stable vortex states with prescribed vorticity asks to show that the conclusions of
Theorem \ref{T.main} hold if assumption \eqref{scaling2} is replaced by
assumption \eqref{scaling2a}, as long as $K_1\le \hex \ll \frac 1 \ep$.
For use toward a possible proof of
this conjecture, Proposition \ref{main.prop} is sharp. The stronger requirement
\eqref{scaling2} arises from other parts of the proof, described below,
involving upper and lower energy bounds in terms of the renormalized energy.


\subsubsection{Lower energy bounds}

In Proposition \ref{P.gstab}, we prove some results relating the
Ginzburg-Landau energy and the renormalized energy. We show that if
$(u,A)$ satisfies \eqref{wherearethey}, then
\be\label{pmt1}
GL_\ep(u,A) \ge H_\ep^N(a) +\kappa_\ep^{GL}  - \mbox{ error terms}
\ee
where $\kappa_\ep^{GL}$ is a constant defined in \eqref{kappaGL}.
The error terms are quite complicated and depend on $\ep, E_\ep(u), \hex, \sigma_\ep, N, \rho_a$,
where
\be
\rho_a = \frac 14 \min\{ \min_{i\ne j} |a_i - a_j| ,\  \min_i \dist(a_i, \partial \Omega)\}.
\label{rhoa.def}\ee
but will end up being small 
under assumptions \eqref{scaling1}, \eqref{scaling2}, and for 
our eventual choice of $\sigma_\ep$. The
proof of this smallness uses the lower bound for $\rho_a$
that follows from Proposition \ref{main.prop}.
Estimate \eqref{pmt1} is reasonably sharp in the sense that for
every $a\in \Omega^N$ such that $\rho_a$ is not too small, there exist
$(u,A)$ such that \eqref{wherearethey} holds and in addition
\be\label{pmt2}
GL_\ep(u,A) \le H_\ep^N(a) +\kappa_\ep^{GL}  + \mbox{ error terms}
\ee
for error terms of a similar character, and that are similarly small
under our assumptions. 
This follows from arguments in the proof of Proposition \ref{interior}.

These results are adaptations to our setting of estimates
proved in \cite{JSp2}, which dealt with the simplfied functional $E_\ep$,
without magnetic field, rather than the full Ginzburg-Landau 
functional $GL_\ep$. Similar results are proved in \cite{KSp}.
Bounds related to \eqref{pmt1}, \eqref{pmt2} can also be found
in \cite{SandSerfbook}; see for example the formal discussion
leading up to equation (9.3), or the rigorous derivation
(10.2),  which
applies for a bounded number of vortices in the limit as $\ep\to 0$.

\subsubsection{Localization}

The next input needed for Theorem \ref{T.main} is given in Proposition \ref{P.localization}, also adapted from \cite{JSp2}.
It involves the quantity
\[
\Sigma_\ep^{GL}(u,A,a) := GL_\ep(u,A) - \left( H_\ep^N(a) +\kappa_\ep^{GL}\right) ,
\]
which measures the {\em excess} energy of $(u,A)$,
relative to the lower bound \eqref{pmt1}.
The proposition
shows that if $(u,A)$ satisfies \eqref{wherearethey} and $\Sigma_\ep^{GL}(u,A,a)$
is small, then one can find
$\xi = (\xi_1,\ldots, \xi_N)$ near $a$ such that 
\be\label{pmt3}
\| Ju - \pi \sum_{i=1}^N \delta_{\xi_i}\|_{\dot W^{-1,1}}\le \mbox{error terms}
\ee
where the (complicated) error terms depend on the same parameters as \eqref{pmt1},
together with $\Sigma_\ep^{GL}(u,A,a) $. This is a good estimate when the
right-hand side is smaller than $\sigma_\ep$, appearing in hypotheses \eqref{wherearethey}; otherwise it is obvious.

\subsection{Proof of Theorem \ref{T.main}}\label{sec:PMT}

We now describe the proof of our main result.  The inequalities appearing in the
argument are all established in Proposition \ref{interior}

Let $(u_\ep, A_\ep)$ minimize $GL_\ep$ in $\calA_\ep^N$, 
where the parameter $\sigma_\ep$ in
the definition of $\calA_\ep^N$  is in the range
$\ep^{99/100}\lesssim \sigma_\ep \lesssim \ep^{49/100}$.
The precise choice will depend on
$\hex, N$ and $\ep$, see \eqref{sigmaep.def}. 

We first verify, by construction of a competitor, that
\be\label{pmt1a}
GL_\ep(u_\ep, A_\ep) \le \min_{M_{\ep,N}} H_\ep^N + \kappa_\ep^{GL} + \frac \tep 3
\ee
whenever $\ep$ is small enough.
This is an instance of \eqref{pmt2} and its proof essentially contains
that of the general case (which we omit).
We also show that $E_\ep(u_\ep)\le C \hex^2$, which is
needed to make effective use of the lower bound and localization results.

The definition of $\calA_\ep^N$ implies that there exists some $a_\ep\in M_{\ep, N}^*$ such that $(u_\ep, A_\ep)$ and $a_\ep$
satisfy \eqref{wherearethey}. Then \eqref{pmt1a} immediately yields
\[
\Sigma^{GL}_\ep(u_\ep, A_\ep, a_\ep) = 
GL_\ep(u_\ep, A_\ep)  - H_\ep^N(a_\ep)  - \kappa_\ep^{GL}  \le \frac \tep 3.
\]
Note also that Proposition \ref{main.prop} provides a lower
bound $\rho_{a_\ep}\ge c_1\hex^{-1/2}$. 
These estimates and the scaling assumptions \eqref{scaling1}, \eqref{scaling2}  allow us to 
control the error terms in \eqref{pmt3} and
finalize the choice of $\sigma_\ep$ in such a way that
\be\label{pmt2a}
\| Ju_\ep - \pi \sum \delta_{\xi_i}\|_{\dot W^{-1,1}} \le \frac 12 \sigma_\ep
\ee
for some $\xi\in M_{\ep, N}$.
Once this is known, we can apply \eqref{pmt1} to relate $GL_\ep(u_\ep, A_\ep)$ to $H_\ep^N(\xi)$. After controlling error terms as above, this
yields
\[
GL_\ep(u_\ep, A_\ep)  \ge  H_\ep^N(\xi)  + \kappa_\ep^{GL}  - \frac \tep3.
\]
Recalling \eqref{pmt1a}, we deduce that 
\be\label{pmt4a}
H_\ep^N(\xi) \le  \min_{M_{\ep,N}} H_\ep^N + \frac 23 \tep.
\ee
Thus $\xi\in M_{\ep,N}^*$, and in fact
Proposition \ref{main.prop} 
guarantees
that  $\xi\in (M_{\ep,N}^*)^{int}$. 
Then \eqref{mp.contain}, \eqref{mp.sep},\eqref{pmt2a}and \eqref{pmt4a} imply that $(u_\ep, A_\ep)\in (\calA_\ep^N)^{int}$, and is thus a local minimizer.  
This completes the proof. $\hfill\Box$


\section{ Near-minimizers of $H_\ep^N$ in $M_{\ep, N}$.}\label{smp}

In this section we prove Proposition \ref{main.prop}. The crucial idea is to transform this problem into a local argument via a screening process. This screening is made possible by first identifying the leading order distribution of vortices through an obstacle problem. In attempting to carry this out we encounter a nontrivial technical challenge; the renormalized energy $H_\ep^N$ is not bounded from below in $\Omega^N$ and this makes impossible a dual formulation. To overcome this difficulty, we modify the renormalized energy near the boundary so as to have the desired dual formulation, and to do this we need to estimate how fast the divergent parts of $H_\ep^N$ go to $-\infty$ as some of the vortices approach $\partial \Omega .$

We remark that $H_\ep^N$ depends on $\ep$ only through $\hex$.
Similarly, all quantities to be introduced in this 
section, (such as auxiliary functions 
$v_\ep, w_\ep = (-\Delta+1)v_\ep$, \ldots) that appear to depend on $\ep$
in fact depend only on $\hex$ (which however may depend on $\ep$.)
Thus the right hypothesis in these results is not that $\ep$ be sufficiently small,
but rather that $\hex$ be sufficiently large (which however forces $\ep$
to be rather small, in view of \eqref{scaling1}).

\subsection{Modification of $H_\ep^N$ }
Proposition \ref{main.prop} deals with near-minimizers $a = (a_1,\ldots, a_N)$ of $H_\ep^N$,
which is unbounded below, 
subject to a constraint that $\mbox{dist}(a_j,\partial \Omega)$ is not too small.
Our first lemma will allow us instead to
analyze  unconstrained near-minimizers of a function $\uH_\ep^N$ that is continuous
on $\bar \Omega^N$ and in particular bounded below.

\begin{lemma}
There exists a  function $v_\ep\in C^\infty_c(\Omega)$ 
such that
\begin{align}
\|(-\Delta+1)v_\ep \|_{L^\infty(\Omega)} &\le  C \hex^{-1/3} \, |\log \hex| \label{vep1} \ , \\
v_\ep(x) &= \frac 1{2 \hex} S(x,x) \quad\mbox{ if }\mbox{dist}(a,\partial \Omega)\ge \hex^{-1/3}.\label{vep2}
\end{align}\label{L.vep}
\end{lemma}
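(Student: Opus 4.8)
\textbf{Proof plan for Lemma \ref{L.vep}.}

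The goal is to build a smooth, compactly supported function $v_\ep$ that coincides with the ``diagonal'' term $\frac{1}{2\hex}S(x,x)$ on the bulk region $\{\dist(x,\partial\Omega)\ge \hex^{-1/3}\}$, and whose image under $-\Delta+1$ is uniformly small, of size $\hex^{-1/3}|\log\hex|$. The natural approach is a cutoff-and-correct construction. First I would record the relevant boundary behaviour of the regular part $S(x,x)$: since $G(x,y)$ vanishes on $\partial\Omega$, one has $S(x,x) = \log\dist(x,\partial\Omega) + O(1)$ as $x\to\partial\Omega$ (with derivatives controlled by $1/\dist(x,\partial\Omega)$ and $1/\dist(x,\partial\Omega)^2$ respectively), a standard fact about Green's functions on smooth domains that can be obtained by comparison with the half-plane Green's function. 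In particular $\frac{1}{2\hex}S(x,x)$ and its first two derivatives are bounded respectively by $C\hex^{-1}|\log\hex|$, $C\hex^{-1}\cdot\hex^{1/3}=C\hex^{-2/3}$, and $C\hex^{-1}\cdot\hex^{2/3}=C\hex^{-1/3}$ on the collar $\{\hex^{-1/3}\le \dist(x,\partial\Omega)\le 2\hex^{-1/3}\}$.

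Next I would pick a cutoff function $\chi_\ep\in C_c^\infty(\Omega)$ with $\chi_\ep\equiv 1$ on $\{\dist(x,\partial\Omega)\ge 2\hex^{-1/3}\}$, $\chi_\ep\equiv 0$ near $\partial\Omega$, $0\le\chi_\ep\le 1$, and with $|\nabla\chi_\ep|\le C\hex^{1/3}$, $|\nabla^2\chi_\ep|\le C\hex^{2/3}$ (possible since the transition region has width $\sim\hex^{-1/3}$), and set
\[
v_\ep(x) := \chi_\ep(x)\,\frac{1}{2\hex}S(x,x).
\]
Then $v_\ep\in C_c^\infty(\Omega)$ and \eqref{vep2} holds by construction. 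To verify \eqref{vep1}, I would compute $(-\Delta+1)v_\ep$ via the Leibniz rule: on the bulk $\{\dist\ge 2\hex^{-1/3}\}$ we have $v_\ep = \frac{1}{2\hex}S(x,x)$, and since $x\mapsto S(x,x)$ is smooth there with $\Delta_x\big(S(x,x)\big)$ bounded by a constant depending only on $\Omega$ (because $S$ solves an elliptic equation with smooth data away from the diagonal blow-up, and the diagonal restriction inherits smoothness), the term $(-\Delta+1)v_\ep$ is $O(\hex^{-1})$ there; near $\partial\Omega$ it vanishes; and on the collar the dangerous terms are $\chi_\ep\,(-\Delta+1)\big(\frac{1}{2\hex}S(x,x)\big)$, $\frac{1}{\hex}\nabla\chi_\ep\cdot\nabla\big(\tfrac12 S(x,x)\big)$, and $\frac{1}{2\hex}S(x,x)\,\Delta\chi_\ep$, which are bounded respectively by $C\hex^{-1/3}$, $C\hex^{1/3}\cdot\hex^{-2/3}=C\hex^{-1/3}$, and $C\hex^{-1}|\log\hex|\cdot\hex^{2/3}=C\hex^{-1/3}|\log\hex|$. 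Taking the max gives \eqref{vep1}.

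The main obstacle is getting the boundary asymptotics of $S(x,x)$ and its derivatives sharp enough — specifically the bound $|\nabla^2_x(S(x,x))|\le C/\dist(x,\partial\Omega)^2$ on the collar — since a worse power would spoil the $\hex^{-1/3}|\log\hex|$ estimate. This requires the standard but slightly delicate decomposition $G(x,y) = \frac{1}{2\pi}\log\frac{1}{|x-y|} + (\text{harmonic corrector})$ together with boundary regularity estimates for the corrector obtained by flattening the boundary and using Schauder theory, or equivalently by comparison with the explicit Green's function of a half-disk or half-plane. Once those derivative bounds are in hand, everything else is the routine product-rule bookkeeping sketched above. (One should also double-check that the power $\hex^{-1/3}$ in the definition of the collar width is exactly the one that balances the $|\nabla^2\chi_\ep|$ and the $|S(x,x)|$ growth against the target rate; the logarithm in \eqref{vep1} is precisely the residue of $|S(x,x)|\sim|\log\dist|$ evaluated at $\dist\sim\hex^{-1/3}$.)
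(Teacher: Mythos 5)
Your plan is essentially the same cutoff-and-correct construction that the paper uses: multiply $\frac{1}{2\hex}S(x,x)$ by a boundary cutoff $\chi_\ep$ with $|\nabla\chi_\ep|\lesssim\hex^{1/3}$, $|\nabla^2\chi_\ep|\lesssim\hex^{2/3}$, and then verify the estimate by the Leibniz rule after establishing the three collar bounds $|S(x,x)|\lesssim|\log d(x)|$, $|\nabla(S(x,x))|\lesssim d(x)^{-1}$, $|\Delta(S(x,x))|\lesssim d(x)^{-2}$. Your Leibniz-rule bookkeeping and the balancing of exponents are correct, and you correctly identify that the $|\log\hex|$ in \eqref{vep1} comes from the $\Delta\chi_\ep\cdot(S/2\hex)$ term. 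Two small discrepancies with the paper are worth flagging. First, a cosmetic one: you take $\chi_\ep\equiv 1$ on $\{d\ge 2\hex^{-1/3}\}$, which only gives \eqref{vep2} on that smaller set; the paper sets $\chi_\ep\equiv 1$ on $\{d\ge\hex^{-1/3}\}$ and $\chi_\ep\equiv 0$ on $\{d\le\tfrac12\hex^{-1/3}\}$, which matches the statement and costs nothing. Second, and more substantively, your route to the derivative estimates on $S(x,x)$ invokes the decomposition $G(x,y)=\tfrac{1}{2\pi}\log\tfrac{1}{|x-y|}+(\text{harmonic corrector})$, but that corrector is \emph{not} harmonic here: $G$ is the Green's function of $-\Delta+1$, so the regular part $S$ satisfies $(-\Delta_x+1)S=\log|x-y|$, an inhomogeneous equation. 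The paper avoids this by subtracting the Bessel potential $\Ktwo$ (the free-space kernel of $-\Delta+1$) instead of the logarithm, so that $\tilde S(x,y)=2\pi(G(x,y)-\Ktwo(x-y))$ is genuinely annihilated by $-\Delta_x+1$ with an explicitly bounded boundary trace $-2\pi\Ktwo(y-\cdot)$; the three collar bounds then follow from the maximum principle and scaled interior elliptic estimates, with no boundary flattening or Schauder theory required. Your Schauder/half-plane-comparison alternative would work if you correct for the lower-order term, but the Bessel-potential decomposition is the cleaner and more direct choice for this operator.
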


The lemma will allow us to prove Proposition \ref{main.prop} by studying
near-minimizers of
\begin{equation}\label{uH.def}
\uH_\ep^N(a) : = \sum_{i=1}^N  2\pi \hex \left[ \xi_0(a_i) + v_\ep(a_i)\right]
+ 2\pi^2 \sum_{i\ne j} G(a_i,a_j),
\end{equation}
which coincides with $H_\ep^N$ in $M_{\ep,N}$ as a result of \eqref{vep2}.

\begin{proof}
Let $\chi_\ep:\Omega\to [0,1]$ denote a smooth function such that 
\begin{equation}\label{chiep.def}
\chi_\ep(x) = 1 \mbox{ if }\dist(x,\partial \Omega) \ge \hex^{-1/3},\qquad
\chi_\ep(x) = 0 \mbox{ if }\dist(x,\partial \Omega) \le \frac 12 \hex^{-1/3},
\end{equation}
and  
\begin{equation}\label{Dchi}
\|\nabla \chi_\ep\|_\infty \le C \hex^{1/3}, \qquad \|\nabla^2\chi_\ep \|_\infty \le C \hex^{2/3}.
\end{equation}
We define
\[
v_\ep (x) :=  \frac {\chi_\ep(x)}{2\hex} S(x,x).
\]
Then \eqref{vep2} is immediate. 
For the proof of \eqref{vep1} we will write $s(x) = S(x,x)$.
In view of  \eqref{chiep.def} and \eqref{Dchi},
it suffices to show that
\begin{equation}\label{s.ests}
|s(x)| \le C(|\log d(x)|+1) , 
\qquad
|\nabla s(x)| \le Cd(x)^{-1} , 
\qquad 
|\Delta s(x)| \le Cd(x)^{-2} , 
\end{equation}
for $d(x) = \dist(x,\partial \Omega)$.

\medskip

{\em Estimate of $|s(x)|$}. Let
\begin{equation}\label{splitG}
\tilde S(x,y) =  2\pi \big( G(y,x) -  \Ktwo(y-x) \big)
\end{equation}
where $\Ktwo\in L^2(\R^2)$ is the Bessel potential of order $2$, that is, the unique
square-integrable function on $\R^2$ solving $(-\Delta+1)\Ktwo = \delta_0$.
Well-known properties of $\Ktwo$ include the fact that it is smooth
away from the origin, radial,
with exponential decay as $|x|\to \infty$;
see for example \cite{Bessel1}, 
where  rather explicit formulas may be found. This formula,
or the maximum principle, implies that $\Ktwo\ge 0$.
Also, 
\[
(-\Delta+1)(2\pi \Ktwo(\cdot) + \log|\cdot|)  = \log|\cdot| \in W^{1,.p}_{loc}(\R^2)\mbox{ for all }p\in [1,2).
\]
Elliptic regularity thus implies that
 \begin{equation}
2\pi \Ktwo(\cdot) + \log|\cdot| \in W^{3,p}_{loc}(\R^2)\subset C^{1, \alpha }_{loc}(\R^2)
\qquad\mbox{ for all }p\in [1,2), \mbox{ with $\alpha = \frac{2p-2}p\in [0,1).$}
\label{K2log}\end{equation}
In particular, $L:= \lim_{z\to 0} (2\pi \Ktwo(z) +\log|z|)$ exists, and as a result,
\begin{equation}
s(x) =
S(x,x) 
=
\lim_{y\to x} \left[ \tilde S(x,y) + 2\pi \Ktwo(x-y) + \log|x-y| \right]  = \tilde S(x,x) + L.
\label{s.tildeS}\end{equation}
It follows from the definitions of $G$ (see \eqref{G.def}) and $\tilde S$ that for every $y\in\Omega$,
\begin{equation}
\begin{cases}
(-\Delta_x  + 1)\tilde S = 0 \qquad&\mbox{ for }x\in \Omega,  \\
\tilde S(x,y) =-2\pi \Ktwo(y-x) &\mbox{ for }x\in \partial \Omega .
\end{cases}
\label{tildeS.pde}\end{equation}
Then, since $\Ktwo\ge 0$, the maximum principle and \eqref{K2log} easily imply that 
for every $y$,
\begin{equation}\label{tildeS.bounds}
0 \ge \max_{x\in \bar\Omega} \tilde S(x,y)
\ge \min_{x\in \bar\Omega} \tilde S(x,y)
= \min_{x\in \partial \Omega} - 2\pi \Ktwo(x-y)
\ge \log (d(y)) -C(\Omega)
\end{equation}
for all $y\in \Omega$. This and \eqref{s.tildeS}  imply the estimate of $|s(x)|$
stated in \eqref{s.ests}.

\medskip

{\em Estimate of $|\nabla s|$}.
Next, we use the chain rule,  \eqref{s.tildeS}, and \eqref{tildeS.pde}  to compute
\[
\nabla s(x) = \left. \left( \nabla_x \tilde S(x,y) + \nabla_y\tilde S(x,y)\right)\right|_{y=x} \ 
=\left.  2\nabla_y \tilde S(x,y) \right|_{y=x}\ ,
\]
where the second equality follows from the standard fact that $\tilde S(x,y) = \tilde S(y,x)$ for
all $x$ and $y$.
By differentiating \eqref{tildeS.pde}, we find that
\[
\begin{cases}
(-\Delta_x  + 1)\partial_{y_j}\tilde S = 0 \qquad&\mbox{ for }x\in \Omega,  \\
\partial_{y_j} \tilde S(x,y) =-2\pi \partial_{y_j} \Ktwo(y-x) &\mbox{ for }x\in \partial \Omega ,
\end{cases}\quad\mbox{ for }j=1,2.
\]
From \eqref{K2log}, we see that $|\nabla \Ktwo(y-x)| \le Cd(y)^{-1}$ for $y\in \Omega$ and $x\in \partial \Omega$,
so we again use the maximum principle to deduce that
\begin{equation}\label{nytS}
\sup_x |\nabla_y\tilde S(x,y)| \le Cd(y)^{-1},\quad\mbox{ and thus }|\nabla s(x)|\le C d(x)^{-1}.
\end{equation}

\medskip

{\em Estimate of $|\nabla^2 s|$}.
Finally, we compute 
\begin{align}
\Delta s(x) 
&= \left. \left( \Delta_x \tilde S(x,y) + 2\nabla_x \cdot \nabla_y\tilde S(x,y) +\Delta_y\tilde S(x,y)\right)\right|_{y=x} \nonumber \\
&= - 2\tilde S(x,x) + \left. 2\nabla_x \cdot \nabla_y\tilde S(x,y) \right|_{y=x}\ . 
\label{Delta.s}
\end{align}
In general, if $w(x)$ satisfies $(-\Delta+1)w = 0$ in a ball $B(r,a)$, then standard elliptic theory\footnote{After rescaling, this is equivalent to the claim that if $(-\Delta+r^2)v=0$
on $B(1)$, then $|Dv(0)|\le C\|v\|_\infty$. This is proved by noting that in this context, 
standard interior estimates $\| v\|_{H^k(B(2^{-k}))} \le C(k) \|v\|_{L^2(B(1))}$ hold with constants 
independent of $r$.}
implies that 
\[
|\nabla w(a)| \le \frac C r \sup_{B(r,a)} |w|, 
\]
Fixing $y\in \Omega$, we apply this to  $w(x) = \partial_{y_j}S(x,y)$
in $B( d(y), y)$ and use \eqref{nytS} to conclude that
\[
|\nabla_x \partial_{y_j} \tilde S(y,y)| \le C d(y)^{-2}
\]
for every $y\in \Omega$. Then \eqref{Delta.s} and our earlier estimate of $|s|$ imply that $|\Delta s(x)|\le C d(x)^{-2}$.
\end{proof}

\subsection{An obstacle problem}

Having modified $H_\ep^N,$ we aim to study near minimizers in $M^*_{\ep, N}$ by characterizing an associated coincidence set.

The main result of this section is the following lemma, which yields some auxiliary 
functions that will play a key role in the proof of Proposition \ref{main.prop}. We introduce a family of obstacle problems indexed by a parameter $\lambda>0.$ 
 For each $\lambda$
in a certain range, this obstacle problem yields, among other things, a  {\it coincidence set}
$\Sigma_\lambda$, see \eqref{obs.output}.
We will see in Lemma \ref{L.uMepNstar} below that, given suitable $\hex$ and $N ,$
most vortices are found in  $\Sigma_\lambda$ for
the particular choice $\lambda =  \frac{2\pi \hex}N$.

This is very convenient, since the obstacle problem formulation allows for the use of barriers, not only to estimate how far the coincidence set is from $\partial\Omega$ but also, as we will see later, the minimum cost of a vortex lying outside the coincidence set in terms of its distance to it.

\begin{lemma}\label{L.obs}
Let $\xi_\ep = \xi_0+v_\ep$, where $v_\ep$ is the function
found in Lemma \ref{L.vep}. Then for $\hex\ge K_1$ and
$\lambda> ( |\Omega|- \hex^{-1/4})^{-1}$, there exist $m(\lambda)>0$, a function
$\vp_\lambda\in C^{1,1}_0(\Omega)$, and a set $\Sigma_\lambda\subset\Omega$
such that
\begin{equation}\label{obs.output}
\begin{aligned}
\zeta_\lambda
&:= \lambda\xi_\ep + \vp_\lambda \ge -m(\lambda), \\
\Sigma_\lambda 
&= \supp((-\Delta+1)\vp_\lambda) = \{ x\in \Omega : \zeta_\lambda(x) = -m(\lambda) \}
\end{aligned}
\end{equation}
and
\begin{equation}\label{probmeas}
\int_\Omega (-\Delta+1)\vp_\lambda \, dx = 1.
\end{equation}
Moreover, there exist positive constants $\lambda_0>\frac 1{|\Omega|}$ and
$c_2, \ldots ,c_5$ such that for all $\ep<\ep_0$,

\begin{itemize}
\item if $(|\Omega|- \hex^{-1/4})^{-1}\le \lambda \le\lambda_0$, then
\begin{align}
&\min_\Omega \zeta_\lambda = -m(\lambda)  \le   - c_2(|\Omega|-\frac 1\lambda)^2 , \label{obstacle2}\\
\zeta_\lambda(x) &  \ge  -2\sqrt{c_3\lambda} \left( |\Omega| - \frac 1{\lambda} \right) d(x) + \lambda d^2(x)
 \qquad\mbox{ if }d(x)< \sqrt{\frac{c_3}\lambda } \left(  |\Omega| - \frac 1{\lambda} \right)  \ .\label{obstacle3}
\end{align}

\item if $\lambda \ge \lambda_0$ then
\begin{equation}
\min_\Omega \zeta_\lambda = -m(\lambda)  \le  
 - c_5\lambda , \qquad \mbox{ and }\quad 
\zeta_\lambda(x)   \ge - c_4 d(x)\label{obstacle2a}.
\end{equation}
\end{itemize}
\end{lemma}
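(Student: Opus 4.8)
The plan is to set up the obstacle problem as the dual (or Euler--Lagrange) formulation of minimizing a quadratic energy over probability measures, with $\xi_\ep$ playing the role of the external potential. Concretely, for $\lambda$ in the stated range I would consider the variational problem of minimizing, over probability measures $\mu$ on $\bar\Omega$ with $H^{-1}$ density, the functional $\mathcal E_\lambda(\mu) = \frac12\int_\Omega(|\nabla h_\mu|^2 + h_\mu^2)\,dx + \lambda\int_\Omega \xi_\ep\,d\mu$, where $h_\mu = (-\Delta+1)^{-1}\mu$ with zero Dirichlet data; equivalently one minimizes a strictly convex functional whose Euler--Lagrange condition is exactly that $\zeta_\lambda := \lambda\xi_\ep + \varphi_\lambda$ (with $\varphi_\lambda = (-\Delta+1)^{-1}\mu_\lambda - \lambda\xi_\ep$, say, up to the normalization) satisfies the complementarity relations in \eqref{obs.output}. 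The constant $-m(\lambda)$ emerges as the Lagrange multiplier for the constraint $\mu(\bar\Omega)=1$, which is \eqref{probmeas}. Standard obstacle-problem regularity (Frehse, or Brezis--Kinderlehrer) gives $\varphi_\lambda \in C^{1,1}_0(\Omega)$ once $\xi_\ep$ is $C^{1,1}$, which it is by Lemma \ref{L.vep} together with the smoothness of $\xi_0$; the bound \eqref{vep1} on $(-\Delta+1)v_\ep$ is what keeps this regularity uniform in $\ep$. The fact that $\Sigma_\lambda$ sits strictly inside $\Omega$ for $\lambda$ not too large follows because near $\partial\Omega$ one has $\xi_\ep \to 0$ while $\xi_\ep < 0$ in the interior (from $\xi_0 < 0$ and the smallness of $v_\ep$), so the obstacle is not touched in a boundary collar whose width we must quantify.

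The heart of the lemma is the quantitative estimates \eqref{obstacle2}, \eqref{obstacle3}, \eqref{obstacle2a}, and I would obtain these by constructing explicit barriers (sub- and supersolutions) for $\zeta_\lambda$. For the lower bound \eqref{obstacle3} near the boundary, note that on the noncoincidence set $(-\Delta+1)\zeta_\lambda = \lambda(-\Delta+1)\xi_\ep = \lambda(-1 + (-\Delta+1)v_\ep) = -\lambda + O(\lambda \hex^{-1/3}|\log\hex|)$, so $\zeta_\lambda$ is, up to a small error, a solution of $-\Delta\zeta_\lambda = -\zeta_\lambda - \lambda$; comparing with the one-dimensional barrier $x \mapsto \lambda d(x)^2 - c\,d(x)$ in the normal direction (using smoothness of $\partial\Omega$ to write points near the boundary in terms of $d(x)$) and choosing the slope $c$ so that the barrier lies below $\zeta_\lambda$ on the inner edge of the collar and on $\partial\Omega$ (where $\zeta_\lambda = 0$), the maximum principle gives \eqref{obstacle3}. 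For \eqref{obstacle2}, the size of $m(\lambda)$ is governed by how much ``mass'' $\lambda\xi_\ep$ has to be pushed up; I would estimate $\min_\Omega \zeta_\lambda$ from above by testing the complementarity against a well-chosen competitor measure (e.g. normalized Lebesgue measure on a ball), which produces the factor $(|\Omega| - 1/\lambda)^2$: the defect $|\Omega| - 1/\lambda$ measures how far $\lambda$ is above the critical value $1/|\Omega|$, and squaring reflects the quadratic nature of $\mathcal E_\lambda$. The regime $\lambda \ge \lambda_0$ in \eqref{obstacle2a} is the ``saturated'' one where $\Sigma_\lambda$ fills most of $\Omega$, the obstacle depth is linear in $\lambda$, and the boundary decay of $\zeta_\lambda$ is merely Lipschitz, $\zeta_\lambda(x) \ge -c_4 d(x)$; here the barrier is simply a multiple of $d(x)$ and the constant $c_4$ comes from the gradient bound on $\varphi_\lambda$ at $\partial\Omega$ via Hopf's lemma together with $|\nabla\xi_\ep|$ being bounded.

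The main obstacle I anticipate is making the barrier comparisons rigorous and uniform: one must control $\zeta_\lambda$ in a boundary collar whose width itself depends on $\lambda$ and on the defect $|\Omega| - 1/\lambda$, verify that the collar lies outside $\Sigma_\lambda$ (so that the PDE $(-\Delta+1)\zeta_\lambda = \lambda(-\Delta+1)\xi_\ep$ genuinely holds there), and absorb the error term $O(\lambda\hex^{-1/3}|\log\hex|)$ coming from \eqref{vep1} into the main terms --- this is where the hypothesis $\hex \ge K_1$ with $K_1$ large is spent, and where the restriction $\lambda > (|\Omega| - \hex^{-1/4})^{-1}$ (rather than merely $\lambda > 1/|\Omega|$) enters, since the relevant geometric scales must dominate $\hex^{-1/3}$. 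A secondary technical point is establishing the two-regime dichotomy cleanly: one needs a threshold $\lambda_0 > 1/|\Omega|$ below which the collar-barrier argument of \eqref{obstacle3} applies and above which the coincidence set is large enough that the cruder estimate \eqref{obstacle2a} is both true and sufficient; choosing $\lambda_0$ amounts to balancing the width of the noncoincidence collar against a fixed fraction of $\mathrm{dist}(\cdot,\partial\Omega)$, and then $c_2,\ldots,c_5$ are fixed accordingly, depending only on $\Omega$.
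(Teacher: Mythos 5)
Your overall architecture matches the paper's: cast the problem as an obstacle problem and get the quantitative estimates from explicit barriers that are roughly quadratic in $d(x)$. The framing of the construction is different, though, and there is a genuine gap in your argument for \eqref{obstacle2}.

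On the construction: you work dually, minimizing $\mathcal E_\lambda(\mu)$ over probability measures and reading off $-m(\lambda)$ as the Lagrange multiplier. The paper instead treats $m$ as a free second parameter: it minimizes $I(\varphi)=\frac12\int|\nabla\varphi|^2+\varphi^2$ over $\{\varphi\in H^1_0:\varphi\ge-\lambda\xi_\ep-m\}$, sets $f(\lambda,m)=\int_\Omega(-\Delta+1)\varphi_{\lambda,m}$, shows via the barrier lemma that $m\mapsto f(\lambda,m)$ is strictly decreasing on its support, compactly supported, with $f(\lambda,0)=\lambda|\Omega|(1+o(\hex^{-1/4}))>1$ precisely because $\lambda>(|\Omega|-\hex^{-1/4})^{-1}$, and concludes that $f(\lambda,m)=1$ has a unique solution $m(\lambda)$. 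The two pictures are equivalent by duality, but the paper's version makes the monotonicity in $m$ available as a tool, and in particular it reveals that the hypothesis $\lambda>(|\Omega|-\hex^{-1/4})^{-1}$ is spent exactly to guarantee $f(\lambda,0)>1$ so that $m(\lambda)>0$ exists; your reading that it is about dominating the $\hex^{-1/3}$ scale is somewhat tangential.

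The gap is in your treatment of \eqref{obstacle2}. "Testing the complementarity against a competitor measure such as normalized Lebesgue on a ball" does not obviously produce a lower bound for a Lagrange multiplier, and it is not at all clear how it yields the $(|\Omega|-1/\lambda)^2$ scaling; Lagrange multipliers are not energies, and energy comparisons control the value of $\mathcal E_\lambda$, not $m(\lambda)$, without further argument. The paper's route is more direct and uses nothing beyond the same barriers you already introduce for \eqref{obstacle3}: the truncated quadratics $\eta_{\delta,m}=f_{\delta,m}\circ d$ are upper barriers for $\delta=2\sqrt{m/\lambda}$ and lower barriers for $\delta=\sqrt{m/\lambda}$, giving the sandwich $\Omega\setminus\calN_{2\sqrt{m/\lambda}}\subset\Sigma_{\lambda,m}\subset\Omega\setminus\calN_{\sqrt{m/\lambda}}$. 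One then estimates $f(\lambda,m)=\int_{\Sigma_{\lambda,m}}(\lambda w_\ep-m)$ from both sides, solves $f(\lambda,m)=1$, and obtains $c(|\Omega|-\lambda^{-1})^2\le m(\lambda)\le C(|\Omega|-\lambda^{-1})^2$. You should replace your competitor-measure idea with this coincidence-set sandwich plus the explicit computation of $f$.

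Two smaller points: your barrier argument for \eqref{obstacle3} is essentially the paper's but is stated slightly loosely — you need the barrier to satisfy the right inequality on the boundary of the collar \emph{and} to fall into the lower-barrier hypothesis of the comparison lemma (the paper's Lemma \ref{L.barrier} requires the correct sign of $(-\Delta+1)\eta+\lambda w_\ep$ on $\{\eta>-m\}$ and $\eta\le 0$ on $\partial\Omega$), not just a one-dimensional comparison in the normal direction. And for \eqref{obstacle2a} the paper uses the upper barrier $\eta_{\delta,\theta\lambda}$ to force $m(\lambda)\ge\theta_0\lambda$ plus $\zeta_\lambda\ge\lambda\xi_\ep$ together with Hopf's lemma for the linear-in-$d$ bound, which is what your sketch intends but should be spelled out.
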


The new part of the lemma, apart from the  dependence on $\ep$ (which is very mild and
hence mostly suppressed in our notation),
consists of conclusions \eqref{obstacle2}, \eqref{obstacle3}, which of course
imply lower bounds on the distance between $\Sigma_\lambda$ and $\partial\Omega$.

Following ideas from \cite{SandSerfCMP}, Appendix A
we obtain $\vp_\lambda$ from an obstacle problem, stated as follows.
For $\lambda, m\ge 0$, define
\begin{align*}
\calO_{\lambda, m} &:=  \{ \vp \in H^1_0(\Omega) : \vp \ge - \lambda \xi_\ep - m  \} , \\
I(\vp) &:= \frac 12\int_{\Omega} |\nabla\vp|^2+\vp^2 \, dx . 
\end{align*}
Since $I(\cdot)$ is strictly convex and $\calO_\lam$ is convex and nonempty, we may define
\begin{align*}
\vp_\lam& := \mbox{ the unique minimizer of $I(\cdot)$ in $\calO_\lam$} , \\
\zeta_\lam &:=  \lambda \xi_\ep +\vp_\lam ,  \\
\Sigma_{\lambda, m} &:= \{ x\in \Omega :  \zeta_\lam(x) = - m \}.
\end{align*}
Below we will define $\vp_\lambda = \vp_\lam$ for a suitable choice $m=m(\lambda)$.

%
Well-known results about the obstacle problem (see for example \cite{KS}), 
guarantee that $\vp_\lam$ is $C^{1,1}$ and that
\begin{equation}\label{vplam1}
(-\Delta+1)\vp_\lam = \mathbf 1_{\Sigma_\lam}\cdot (\lambda w_\ep - m) \ge 0, \qquad
\mbox{ for }
w_\ep := (\Delta-1)\xi_\ep := 1 + o(\hex^{-1/4}).
\ee
It follows that  $\zeta_\lam\in C^{1,1}(\Omega)$, and
\begin{equation}\label{zetalam1}
(-\Delta+1)\zeta_\lam = - \lambda  w_\ep + \mathbf 1_{\Sigma_\lam}\cdot (\lambda w_\ep - m)  \ge -\lambda w_\ep.
\end{equation}
This equation allows us to control certain aspects of $\zeta_\lam$ by constructing sub- and supersolutions.

\begin{lemma}\label{L.barrier}
Assume that $\eta\in C^{1,1}(\Omega)$ and that $\eta\ge -m$ everywhere in $\Omega$.

\medskip

{\em Upper barrier}: If $\eta \ge0$ on $\partial \Omega$, 
and $(-\Delta+1)\eta \ge -\lambda w_\ep$  {\em a.e.} in $\Omega,$
then $\eta \ge \zeta_\lam$ in $\Omega$, and thus 
$\{ \eta = -m\} \subset \Sigma_\lam $.

\medskip

{\em Lower barrier}: If $\eta \le0$ on $\partial \Omega$, 
and $(-\Delta+1)\eta \le -\lambda w_\ep +  \mathbf 1_{ \{ \eta = -m \} }\cdot (\lambda w_\ep - m)$  {\em a.e.} in $\Omega,$
then $\eta \le \zeta_\lam$ in $\Omega$, and thus
$\Sigma_\lam \subset \{ \eta = -m\} $.
\end{lemma}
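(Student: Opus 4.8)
The statement to prove is Lemma \ref{L.barrier}, a comparison principle for the obstacle-problem solution $\zeta_\lam$. The plan is to treat the upper and lower barriers separately, each by a standard maximum-principle argument applied to the difference of the candidate barrier and $\zeta_\lam$, exploiting the variational inequality satisfied by $\zeta_\lam$ on the coincidence and non-coincidence sets.

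For the \emph{upper barrier}: set $\eta$ as in the hypothesis and consider $g := \zeta_\lam - \eta \in C^{1,1}(\Omega)$. On $\partial\Omega$ we have $\zeta_\lam = \lambda\xi_\ep + \vp_\lam = 0$ (since $\xi_\ep, \vp_\lam \in H^1_0$) and $\eta \ge 0$, so $g \le 0$ on $\partial\Omega$. I would argue $g \le 0$ in all of $\Omega$ by contradiction: suppose $\{g > 0\}$ is nonempty; it is an open set on whose boundary $g = 0$. On the set $\{g>0\}$ I claim $(-\Delta+1)g \le 0$ a.e. Indeed, from \eqref{zetalam1}, $(-\Delta+1)\zeta_\lam = -\lambda w_\ep + \mathbf 1_{\Sigma_\lam}(\lambda w_\ep - m)$; on $\Sigma_\lam$ one has $\zeta_\lam = -m \le \eta$ (using $\eta \ge -m$ everywhere), so $g \le 0$ there and hence $\{g > 0\} \subset \Omega\setminus\Sigma_\lam$, where $(-\Delta+1)\zeta_\lam = -\lambda w_\ep$. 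Combined with the hypothesis $(-\Delta+1)\eta \ge -\lambda w_\ep$, this gives $(-\Delta+1)g \le 0$ a.e. on $\{g>0\}$, i.e. $g$ is a subsolution of $(-\Delta+1)$ there. The weak maximum principle on the open set $\{g>0\}$, with $g=0$ on its boundary, forces $g \le 0$ throughout $\{g>0\}$ — a contradiction unless the set is empty. Hence $\zeta_\lam \le \eta$ in $\Omega$. The inclusion $\{\eta = -m\} \subset \Sigma_\lam$ is then immediate: if $\eta(x) = -m$ then $-m \le \zeta_\lam(x) \le \eta(x) = -m$, so $\zeta_\lam(x) = -m$, which by \eqref{obs.output} (equivalently the definition of $\Sigma_{\lambda,m}$) means $x \in \Sigma_\lam$.

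For the \emph{lower barrier}: this is the symmetric argument. Put $h := \eta - \zeta_\lam$. On $\partial\Omega$, $\eta \le 0 = \zeta_\lam$, so $h \le 0$ there. Suppose $\{h > 0\}$ is nonempty; on it $\eta > \zeta_\lam \ge -m$, so in particular the set $\{h>0\}$ is disjoint from $\{\eta = -m\}$, and there the hypothesis on $\eta$ reads $(-\Delta+1)\eta \le -\lambda w_\ep$, while \eqref{zetalam1} gives $(-\Delta+1)\zeta_\lam \ge -\lambda w_\ep$. Hence $(-\Delta+1)h \le 0$ a.e. on $\{h>0\}$, and the maximum principle with $h=0$ on $\partial\{h>0\}$ again forces $h \le 0$ there, a contradiction. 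So $\eta \le \zeta_\lam$ in $\Omega$. For the inclusion $\Sigma_\lam \subset \{\eta = -m\}$: if $x \in \Sigma_\lam$ then $\zeta_\lam(x) = -m$, so $-m \le \eta(x) \le \zeta_\lam(x) = -m$, giving $\eta(x) = -m$.

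The only subtle point — and the step I would be most careful about — is the regularity and the precise form of the maximum principle being invoked. Since $\zeta_\lam \in C^{1,1}(\Omega)$ and $\eta \in C^{1,1}(\Omega)$, the differences $g, h$ lie in $W^{2,\infty}_{loc}$, so the inequalities $(-\Delta+1)g \le 0$ hold a.e., and one may apply the weak maximum principle for $W^{2,p}_{loc}$ supersolutions/subsolutions (e.g. Gilbarg–Trudinger, Theorem 9.1, noting the zeroth-order coefficient has the favorable sign). A clean alternative that avoids even a.e.-subtleties is to test the variational inequality defining $\vp_\lam$ directly against the admissible competitor $\min(\vp_\lam, \eta - \lambda\xi_\ep)$ (for the upper barrier) and use strict convexity of $I$; but the barrier/maximum-principle route is shorter to write and is the one suggested by the phrasing of the lemma. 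I do not anticipate any genuine obstacle beyond bookkeeping the sign conventions in \eqref{zetalam1}.
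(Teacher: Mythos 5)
Your proof is correct and follows essentially the same approach as the paper: a maximum-principle comparison of $\zeta_\lam$ and $\eta$ on the region where the obstacle is not active, invoking the regularity $C^{1,1}\subset W^{2,p}_{loc}$ to justify the pointwise differential inequalities. The only cosmetic difference is that you apply the maximum principle on the positivity sets $\{g>0\}$, $\{h>0\}$, while the paper works directly on $\Omega\setminus\Sigma_\lam$ and $\{\eta>-m\}$; and note that on the piece of $\partial\{g>0\}$ that touches $\partial\Omega$ one only has $g\le 0$ rather than $g=0$, which still suffices for the argument.
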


\begin{proof}
For an upper barrier $\eta$, it is clear that 
$ \zeta_\lam-\eta  =  - m-\eta \le 0$ on $\Sigma_\lam$, 
so the claim follows by applying the 
strong maximum principle (which in $2$ dimensions
requires only $W^{2,p}$ regularity for $p\ge 2$, see for example \cite{GT},  Theorem 9.1) to  $\zeta_\lam-\eta \le 0$  in $\Omega\setminus \Sigma_\lam$.
The other case similarly follows by applying the strong maximum principle to
to $\eta-\zeta_\lam$ on $\{ x\in \Omega : \eta(x)>-m\}$.
\end{proof}


\begin{proof}[Proof of Lemma \ref{L.obs}]
{\em Step 1}. Let
\[
f(\lambda, m) := \int_\Omega (-\Delta+1)\vp_\lam dx \ \overset{\eqref{vplam1}}= 
\int_{\Sigma_\lam}( \lambda w_\ep - m)\, dx.
\]
Lemma \ref{L.barrier}  and \eqref{zetalam1} immediately imply that
\[
\mbox{ if }0< m_1 < m_2, \quad\mbox{ then }
\frac{m_1}{m_2}\zeta_{\lambda, m_2} \ge  \zeta_{\lambda, m_1} \mbox{ and hence }\Sigma_{\lambda ,m_2}\subset \Sigma_{
\lambda ,m_1}.
\]
For $m_1<m_2$, since  $(\lambda w_\ep - m)\ge 0$ on $\Sigma_\lam$, 
it follows that
\begin{multline*}
f(\lambda, m_2)
=\int_{\Sigma_{\lambda, m_2}}( \lambda w_\ep - m_2)\, dx
\le
\int_{\Sigma_{\lambda, m_2}}( \lambda w_\ep - m_1)\, dx 
\le
\int_{\Sigma_{\lambda, m_1}}( \lambda w_\ep - m_1)\, dx
=
f(\lambda, m_1),
\end{multline*}
with strict inequality if $f(\lambda, m_2)>0$.

When $m=0$, it is clear that $\eta = 0$ is both an upper and lower barrier, and hence
that $\zeta_{\lambda,0} = 0$ and $\Sigma_{\lambda,0} = \Omega$.
Thus
\[
f(\lambda,0) \ = \lambda\int_\Omega w_\ep\, dx = \lambda |\Omega|( 1+
o(\hex^{-1/4})). 
\]
In particular, $f(\lambda, 0)>1$ if $\frac 1\lambda < |\Omega|- \hex^{-1/4}$
and $\hex$ is large enough. 

Similarly, if $m > \lambda\|\xi_\ep\|_\infty$, then $\eta = \lambda\xi_\ep$
is both an upper and lower barrier. Hence  $\zeta_\lam =  \lambda\xi_\ep$
(that is, $\vp_\lam = 0$), so $\Sigma_\lam = \emptyset$ and
\[
f(\lambda,m) = 0. 
\]
Since $f(\lambda, \cdot)$ is strictly decreasing
on its support, and compactly supported, there must then exist a unique
$m(\lambda)$ such that $f(\lambda, m(\lambda)) = 1$.

We now define $\vp_\lambda = \vp_{\lambda, m(\lambda)}$, and similarly
we set $\zeta_\lam = \zeta_{\lambda, m(\lambda)}$ and so on.
We have just shown that \eqref{probmeas} holds. The
regularity of $\vp_\lambda$ and properties
\eqref{obs.output}  follow directly from the construction of $\zeta_\lam$
and facts such as \eqref{zetalam1} about the obstacle problem.

{\em Step 2.} It remains to prove \eqref{obstacle2}, \eqref{obstacle3}. 
In doing so, we will first assume that
\begin{equation}\label{lambda.range}
(|\Omega|-\hex^{-1/4})^ {-1} \le\lambda \le \lambda_0
\end{equation}
for $\lambda_0 \in  (\frac 1{|\Omega|} ,\frac 2{|\Omega|})$ to be chosen below, independent
of $\ep \in (0,\ep_0]$.

For $r>0$ we will write 
\[
\calN_r := \{ x\in \Omega : d(x) < r\}.
\]
Fix $d_0$, depending on $\Omega$, such that the boundary distance function $d$ 
is smooth on $\calN_{2d_0}$,
and for $0<\delta \le d_0$ define
\[
\eta_{\delta, m} := f_{\delta, m}\circ d, \qquad\mbox{ for }
f_{\delta, m}(s) =  \begin{cases}
-\frac{2m s}\delta +  \frac {m s^2} {\delta ^2}  &\mbox{ if }s\le \delta\\
-m &\mbox{ if }s> \delta \, .
\end{cases}
\]
Then $\eta_{\delta, m}$ is $C^{1,1}$, and
\[
(-\Delta+1)\eta_{\delta, m} = 
\begin{cases}
-(\frac {2m}{\delta^2} + f_{\delta, m}'(d(x)) \, \Delta d(x) )  + f_{\delta, m}(d(x))
  &\mbox{ in }\calN_\delta\\
-m &\mbox{ in }\Omega\setminus {\overline\calN}_\delta.
\end{cases}
\]
We claim that there exist $m_0$ and $K_1$, depending only on 
$\Omega$, such that  if $0<m<m_0$ and $\hex\ge K_1$, then
\begin{equation}\label{etamdelta}
\eta_{\delta, m} \mbox{ is a }\begin{cases}
\mbox{ upper barrier  if }\delta = 2\sqrt{m/\lambda} \le d_0 \\
\mbox{ lower barrier  if }\delta = \sqrt{m/\lambda} \le d_0\ .
\end{cases}
\end{equation}
For example, if  $\delta  = \sqrt{m/\lambda}$, clearly $|f_{\delta, m}'| \le \frac {2m}\delta  =  2\sqrt{m\lambda}$ and $|f_{m,\delta}|\le m$, so (as long as $\delta\le d_0$)
\[
 (-\Delta+1)\eta_{\delta,m}   \le -2\lambda
 +  2\sqrt{m\lambda} \kappa +m \ \ \mbox{ in }\calN_\delta, \quad\mbox{ for }
\kappa := \|\Delta d\|_{L^\infty(\calN_{d_0})}.
\]
We require $K_1$ be large enough  that $w_\ep  < 3/2$ whenever $\hex\ge K_1$.
Then (recalling that $\lambda > |\Omega|^{-1}$) on sees that $\eta_{\delta, m}$ is a lower
barrier if $m_0$ satisfies
\[
m_0 \le\frac 14  |\Omega|^{-1}\ , \qquad 
2\sqrt{m_0}\kappa \le\frac 14  |\Omega|^{-1/2} , \qquad 
m_0 \le |\Omega|^{-1}d_0^2 \ ,
\]
where the first two conditions guarantee that $2\sqrt{m\lambda}\kappa+m \le \frac 12 \lambda$,
and the last condition guarantees that $\delta<d_0$. The case of an upper barrier
is essentially identical. Thus we have proved \eqref{etamdelta}.

\medskip

Next, we assume that $\hex\ge K_1$ and $0<m<m_0$, and we estimate $f(\lambda,m)$.
It follows from \eqref{etamdelta} that 
\[
\Omega \setminus \calN_{2\sqrt{m/\lambda}} \ \ 
\subset \ \  \Sigma_\lam \ \  \subset \ \ 
\Omega \setminus \calN_{\sqrt{m/\lambda}}
\]
Recalling \eqref{lambda.range}, we infer that there exist
$c < C$, depending only on $\Omega$, such that 
\[
|\Omega|-C\sqrt m \le |\Sigma_\lam|  \le  |\Omega| - c\sqrt m.
\]
Since $\lambda w_\ep - m = \lambda - m + o(\hex^{-1/4})$, it follows that
\[
f(\lambda, m) = \int_{\Sigma_\lam}(\lambda w_\ep-m) \,dx \le (\lambda - m)
(|\Omega| -  c \sqrt{m}) +o(\hex^{-1/4}).
\]
Thus if $c\sqrt m  =  2(|\Omega| - \lambda^{-1}) \overset{\eqref{lambda.range}}\ge 2 \hex^{-1/4} $, then
\[
f(\lambda, m) \le (\lambda - m) (\lambda^{-1} - \hex^{-1/4}) +o(\hex^{-1/4}) < 1
\]
after increasing $K_1$ if necessary.
Similarly, if $C\sqrt m = \frac 12 ( |\Omega| -\lambda^{-1}) \ge \frac 12 \hex^{-1/4}$, then
\[
f(\lambda , m) \ge  (\lambda - m )(\lambda^{-1} +C\sqrt m) +o(\hex^{-1/4}) >1
\]
after adjusting $K_1$.
Since $f$ is a decreasing function of $m$, we
 conclude that when $\hex \ge K_1$ and $0<m<m_0$, if $f(\lambda, m) = 1$, then there exist
constants $C>c>0$ such that
\[
c(|\Omega|-\lambda^{-1})^2 \le m \le C(|\Omega|-\lambda^{-1})^2 \, .
\]
In other words, 
\begin{equation}\label{mlambda}
c(|\Omega|-\lambda^{-1})^2 \le m(\lambda) \le C(|\Omega|-\lambda^{-1})^2.
\end{equation}
In particular \eqref{obstacle2} holds if \eqref{lambda.range} is satisfied.
(The condition $m<m_0(\Omega)$ translates to
the upper bound $\lambda\le \lambda_0$ in \eqref{lambda.range}.)

It now follows from \eqref{etamdelta} that 
$\eta_{\delta,m}$, with $m = m(\lambda)$ and $\delta = \sqrt{m/\lambda}$,
is a lower barrier for $\zeta_\lambda = \zeta_{\lambda, m(\lambda)}$. Since $m\mapsto \eta_{\delta, m}$
is a decreasing function, we conclude from this  and \eqref{mlambda} that
$\zeta_\lambda \ge \eta_{\delta, m}$ when $m = C(|\Omega|-\lambda^{-1})^2$
and $\delta = \sqrt{m/\lambda}$, as long as $d(x) < \frac c{\sqrt \lambda}(|\Omega|-\lambda^{-1})$. This is exactly conclusion \eqref{obstacle3}.

{\em Step 3}. 
Finally, we prove \eqref{obstacle2}, \eqref{obstacle3} for
$\lambda> \lambda_0 $. 

First for any $\delta\in (0,d_0)$ and $m>0$, we compute as above that
\[
(-\Delta+1) \eta_{\delta, m} \ge - \frac{2m}{\delta^2} - \frac {2 m}{\delta}\kappa - m
= - \left( \frac 2{\delta^2} +\frac{2\kappa}{\delta} +1\right)m.
\]
It follows that for any $\delta$ as above, there exists $\theta(\delta)>0$ such that
\[
(-\Delta+1) \eta_{\delta, \theta\lambda} \ge - \frac 12 \lambda \qquad\mbox{ whenever }0<\theta<\theta(\delta)
\]
and hence that $\eta_{\delta, \theta\lambda}$ is an upper barrier for
$\zeta_{\lambda, \theta\lambda}$. Here we are using the
assumption that $w_\ep \ge \frac 12$ everywhere, which we have
already imposed as a condition on $K_1$.
Then arguing as above, we estimate
\[
f(\lambda, \theta\lambda) \ge (|\Omega| - |\calN_{\delta}|)( 1 - \theta + o(\hex^{-1/4}) )\lambda .
\]
Since $\lambda_0 >|\Omega|^{-1}$, we may 
fix $\delta_0$ and $\theta_0<\theta(\delta_0)$  so small that 
$(|\Omega| - |\calN_{\delta_0}|)( (1 - \theta_0 )\lambda_0   >1$, and hence also 
$f(\lambda, \theta_0\lambda) >1$ when $\lambda\ge\lambda_0$.

The monotonicity of $f(\lambda, \cdot) $
then implies that $m(\lambda)\ge \theta_0\lambda$ whenever $\lambda \ge\lambda_0$.

It is also clear that $\vp_\lam\ge 0$ for all choices of parameters, and
hence that $\zeta_\lambda \ge  \lambda\xi_\ep$. Hopf's lemma implies that 
there exists a positive constant $c$ such that $\xi_0(x) \le -c d(x)$ for all
$x$, and it follows that $\xi_\ep(x) \le -(c/2)d(x)$ for all sufficiently large
$\hex$. This proves \eqref{obstacle2a}.

\end{proof}



\subsection{ Proof of Proposition \ref{main.prop} }

We now complete the proof of the proposition
by studying  near-minimizers of $\uH_\ep^N$, defined in \eqref{uH.def}. Here, we turn the delicate problem of estimating deviations of the energy due to small variations in the position of a single vortex. We reduce this problem, which is clearly nonlocal, to an almost local one by a screening procedure. This is a quantitative version of an argument\footnote{attributed by the authors of \cite{RNS} to unpublished work of Lieb} for a discrete energy similar to ours 
but simpler in some respects,
to bound from below minimum neighbor distances in minimizers. 


\begin{lemma}
There exists $c_0, c_1, \tep>0$ such that 
if $N, \hex$ satisfy \eqref{scaling1}, \eqref{scaling2}
and 
\[
a\in \uM^*_{\ep, N}:= \{ a = (a_1,\ldots, a_N)\in \Omega^N : \uH_\ep^N(a) \le \inf_{\Omega^N} \uH_\ep^N + \tep\},
\]
then\begin{align}\label{contain0}
\dist(a_i,\partial \Omega) &\ge  c_0  \hex^{-1/4} \mbox{ for all i}.
\\
\label{sep0}|a_i - a_j| &\ge  c_1\hex^{-1/2} \mbox{ for all }i\ne j 
\end{align}
\label{L.uMepNstar}\end{lemma}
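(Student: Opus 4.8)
The plan is to reduce the $N$-body near-optimality built into $\uM_{\ep,N}^*$ to $N$ decoupled one-body minimization problems, and then to analyze those via the obstacle-problem picture of Lemma \ref{L.obs}. Fix $a\in\uM_{\ep,N}^*$ and an index $k$. Collecting in $\uH_\ep^N(a)$ (see \eqref{uH.def}) the terms that involve $a_k$ gives
\[
\uH_\ep^N(a)=\Phi_k+2\pi\,\phi_k(a_k),\qquad \phi_k(x):=\hex\,\xi_\ep(x)+2\pi\sum_{j\ne k}G(x,a_j),
\]
where $\Phi_k$ depends only on $(a_j)_{j\ne k}$. The function $\phi_k$ is continuous on $\Omega\setminus\{a_j:j\ne k\}$, tends to $0$ on $\partial\Omega$ and to $+\infty$ at each $a_j$, and is bounded below (since $G\ge 0$ and $\xi_\ep\in L^\infty$), so it attains its minimum at an interior point; and replacing $a_k$ by any competitor $b\in\Omega$ shows $\inf_{\Omega^N}\uH_\ep^N\le\Phi_k+2\pi\min_\Omega\phi_k$. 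Subtracting this from $\uH_\ep^N(a)$ and using $\uH_\ep^N(a)\le\inf_{\Omega^N}\uH_\ep^N+\tep$ yields $\phi_k(a_k)\le\min_\Omega\phi_k+\tep/2\pi$. Hence it suffices to show that every $(\tep/2\pi)$-approximate minimizer of $\phi_k$ is at distance $\ge c_0\hex^{-1/4}$ from $\partial\Omega$ and at distance $\ge c_1\hex^{-1/2}$ from each $a_j$, $j\ne k$.

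For the boundary bound \eqref{contain0} I would compare the nonlocal part $2\pi\sum_{j\ne k}G(\cdot,a_j)=2\pi\,G\ast(\mu_a-\delta_{a_k})$, $\mu_a:=\sum_j\delta_{a_j}$, with the mean-field potential $2\pi N\vp_\lambda=2\pi N\,G\ast\bigl((-\Delta+1)\vp_\lambda\bigr)$, where $\vp_\lambda,\zeta_\lambda,m(\lambda),\Sigma_\lambda$ are the outputs of Lemma \ref{L.obs} at the scale $\lambda\asymp\hex/N$ chosen so that \eqref{scaling2a} is precisely the condition putting $\lambda$ in the admissible range $\lambda\ge(|\Omega|-\hex^{-1/4})^{-1}$. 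With such a comparison one gets $\phi_k=c\,N\,\zeta_\lambda+(\text{error})$ for an explicit $c>0$. Since $\zeta_\lambda\ge-m(\lambda)$ with equality exactly on $\Sigma_\lambda$, approximate minimality of $a_k$ becomes $\zeta_\lambda(a_k)+m(\lambda)\lesssim(\tep+\text{error})/N$. The barriers $\eta_{\delta,m}$ of the proof of Lemma \ref{L.obs} together with \eqref{obstacle2}, \eqref{obstacle3} (resp. \eqref{obstacle2a} when $\lambda\ge\lambda_0$) give, on one hand, $\dist(\Sigma_\lambda,\partial\Omega)\ge c\hex^{-1/4}$, and on the other, that $\zeta_\lambda+m(\lambda)$ exceeds a positive constant on the collar $\{\dist(\cdot,\partial\Omega)<\tfrac12\dist(\Sigma_\lambda,\partial\Omega)\}$ and, more sharply, $\zeta_\lambda(x)+m(\lambda)\gtrsim\lambda\,\dist(x,\Sigma_\lambda)^2$ near $\Sigma_\lambda$. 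Thus $\dist(a_k,\Sigma_\lambda)\lesssim\sqrt{(\tep+\text{error})/(N\lambda)}\lesssim\hex^{-1/2}$, whence $\dist(a_k,\partial\Omega)\ge c\hex^{-1/4}-C\hex^{-1/2}\ge c_0\hex^{-1/4}$ for $\hex$ large; this is \eqref{contain0}.

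Once \eqref{contain0} holds for every index, all regular parts $S(a_i,a_i),S(a_i,a_j)$ are bounded by $C|\log\hex|$, so the logarithmic repulsion in $\phi_k$ becomes exploitable: for fixed $j\ne k$, $\phi_k(x)\ge-\log|x-a_j|+\hex\xi_\ep(x)+2\pi\sum_{j'\ne k,j}G(x,a_{j'})-C$, and comparing with the (now essentially sharp) value $\min_\Omega\phi_k=-cNm(\lambda)+o(1)$ first yields a crude bound $|a_k-a_j|\ge\hex^{-C}$. Feeding this separation back into the mean-field comparison — $\mu_a$ being a sum of separated masses of local density $\asymp\hex$ on $\Sigma_\lambda$ — improves the error term and pins the local field $2\pi\sum_{j'\ne k}G(\cdot,a_{j'})$ to its mean-field value closely enough to force $|a_k-a_j|\ge c_1\hex^{-1/2}$, the mean-field inter-vortex scale; this is \eqref{sep0}. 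One then fixes $\tep$ small and $\ep_0$ small (equivalently $K_1$ large) so that all the accumulated errors are dominated by the relevant fractions of $c_0\hex^{-1/4}$ and $c_1\hex^{-1/2}$.

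The step I expect to be the main obstacle is the error control underlying the last two paragraphs: a naive discrete-to-mean-field estimate of $2\pi\sum_{j\ne k}G(\cdot,a_j)$ against $2\pi N\vp_\lambda$ loses $O(N)$, which would overwhelm the barrier gap $\sim\tep/N$. Following \cite{RNS} (the argument is attributed there to Lieb), I would \emph{screen} the long-range part of the potential: in each ball $B_r(a_j)$ with $r\sim\hex^{-1/2}$ the mean-field spacing, replace the true potential by a screened one with the same boundary values, so that the discrepancy between $\phi_k$ and its surrogate $cN\zeta_\lambda$ is supported in $\bigcup_{j\ne k}B_r(a_j)$ and can be estimated ball by ball; summing, the total error is $o(1)$ under \eqref{scaling1}, \eqref{scaling2a}, hence negligible against $\tep/N$. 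Making this quantitative — tracking how the radii and the errors depend on $\dist(a_j,\partial\Omega)$ and on the very separation being established, which couples the last two steps into a bootstrap — is the bulk of the work, and is where \eqref{scaling2a} is essential: it keeps the obstacle problem of Lemma \ref{L.obs} in the regime where $\Sigma_\lambda$ carries the full unit of mean-field mass and remains $\gtrsim\hex^{-1/4}$ away from $\partial\Omega$.
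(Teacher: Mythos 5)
Your first paragraph is correct and coincides with the paper's first move: freezing $(a_j)_{j\ne k}$ and using near-optimality of $a_k$ for the one-body potential $\phi_k(\cdot)=\hex\xi_\ep(\cdot)+2\pi\sum_{j\ne k}G(\cdot,a_j)$ gives $\phi_k(a_k)\le\min_\Omega\phi_k+\tep/2\pi$ (in the paper, after dividing by $2\pi N$, this is \eqref{contain1} with $\lambda=\hex/2\pi N$). You have also correctly identified Lemma \ref{L.obs} and the set $\Sigma_\lambda$ as the key structural input, and correctly diagnosed that a naive discrete-to-continuum bound on the error loses far too much. However, the route you then take has a genuine gap, and the paper resolves it in a completely different and much more economical way.

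The gap. Writing $\phi_k/(2\pi N)=\lambda\xi_\ep+\vp_\lambda+U=\zeta_\lambda+U$ with $U:=-\vp_\lambda+\tfrac1N\sum_{j\ne k}G(\cdot,a_j)$, what you call the ``error'' is exactly $2\pi N\,U$. To conclude $\zeta_\lambda(a_k)\le\min\zeta_\lambda+C\tep/N$ from $\phi_k(a_k)\le\min\phi_k+\tep/2\pi$, the approximation $\phi_k\approx 2\pi N\zeta_\lambda$ would have to be accurate to $O(\tep/N)$ \emph{uniformly}, i.e.\ $\|U\|_\infty\lesssim\tep/N^2$. The screening-by-balls estimate you invoke from \cite{RNS} does not give this: it controls an averaged/local energy, and even the optimistic total bound ``$o(1)$'' you quote for $2\pi N U$ is $N$ times too weak. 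Worse, $U$ has $\delta$-singularities at each $a_j$, $j\ne k$, so there is no hope of bounding it pointwise at all; and the bootstrap you sketch for \eqref{sep0} (``crude bound $\hex^{-C}$, then feed back'') relies on the very quantitative control of $U$ that is unavailable. As written the argument does not close.

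What the paper actually does. It never estimates $U$. The point of subtracting $\vp_\lambda$ is that $U$ then solves
\[
(-\Delta+1)U=-\mu_\lambda+\tfrac1N\sum_{j\ne k}\delta_{a_j}\quad\text{in }\Omega,\qquad U=0\text{ on }\partial\Omega,
\]
with $\mu_\lambda=(-\Delta+1)\vp_\lambda$ supported on $\Sigma_\lambda$. Integrating shows $\inf_\Omega U<0$ (the right-hand side has total mass $-1/N<0$). Any interior minimum of $U$ with a negative value cannot lie at an $a_j$ (where $U=+\infty$) nor at a point of $\Omega\setminus\Sigma_\lambda$ (where $(-\Delta+1)U=0$, so $\Delta U=U<0$, precluding a local min). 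Hence $\min_\Omega U$ is attained on $\Sigma_\lambda$, which is \emph{exactly} the set where $\zeta_\lambda$ attains its minimum $-m(\lambda)$. Therefore
\[
\min_\Omega(\zeta_\lambda+U)=\min_\Omega\zeta_\lambda+\min_\Omega U
\]
\emph{with no error whatsoever}, and \eqref{contain1} immediately becomes $\zeta_\lambda(a_k)\le\min_\Omega\zeta_\lambda+\tep/4\pi^2N$, from which the barriers \eqref{obstacle2}--\eqref{obstacle2a} give \eqref{contain0}. The separation bound \eqref{sep0} is likewise a maximum-principle argument, not a bootstrap: one decomposes $U=U_{near}+U_{far}$ where $U_{near}$ collects $\tfrac1N G(\cdot,a_j)$ and the portion of $-\vp_\lambda$ generated by $B_r(a_j)$; one checks $U_{near}$ rises by $\gtrsim 1/N$ between $\bar B_{r/2}(a_j)$ and $\partial B_r(a_j)$ (the $\log$ wins against $\psi_1,\psi_2$, whose oscillation on $B_r(a_j)$ is $O(1/N)$ by \eqref{psi1b}, \eqref{psi2b} once \eqref{contain0} is known); and then, if $a_k\in B_{r/2}(a_j)$, the near-minimality $U(a_k)\le\min_\Omega U+\tep/4\pi^2N$ forces $U_{far}$ to have a negative interior local minimum in $B_r(a_j)$, contradicting $(-\Delta+1)U_{far}\ge 0$ there. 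In short: you need the sign and support structure of the PDE for $U$, not a size estimate for $U$.
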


Proposition \ref{main.prop} is an immediate corollary. Indeed,
It follows from \eqref{vep2} that $H_\ep^N = \uH_\ep^N$
in $M_{\ep,N}$ (defined in \eqref{Mepn.def}).
Thus for $a\in M_{\ep,N}^*\subset M_{\ep, N}$, 
\[
\uH_\ep^N(a) = H_\ep^N(a) 
\overset{\eqref{Mepnstar.def}}\le
 \inf_{M_{\ep,N}} H_\ep^N+\tep
=
 \inf_{M_{\ep,N}} \uH_\ep^N+\tep
\overset{\eqref{contain0}}=
\inf_{\Omega^N} \uH_\ep^N+\tep .
\]
Hence $a\in \uM_{\ep,.N}^*$, so Lemma \ref{L.uMepNstar} implies that $a$
satisfies \eqref{contain0}, \eqref{sep0}, proving Proposition \ref{main.prop}.

\begin{proof}[Proof of Lemma \ref{L.uMepNstar}]
Assume that $a = (a_1,\ldots, a_N)\in \uM_{\ep,N}^*$.

For the proof, we will write 
\[
\mu_\lambda := (-\Delta+1)\vp_\lambda\, dx   = \mathbf 1_{\Sigma_\lambda}(\lambda w_\ep
- m(\lambda)) \, dx
\]
that is, the measure on $\Omega$ whose density with respect to Lebesgue measure is
$(-\Delta+1)\vp_\lambda$. Recall also that $\mu_\lambda$ is a probability measure,
by \eqref{probmeas}.

\bigskip

{\em Proof of \eqref{contain0}}.
Let
\[
\lambda = \frac\hex {2\pi N} \overset{\eqref{scaling2}}\ge \left( |\Omega| - \hex^{-1/4} \right) ^{-1}.
\]
Then for any $\tilde a_1\in \Omega$, we
use the functions $\vp_\lambda$ and  $\zeta_\lambda = \lambda\xi_\ep +\vp_\lambda$ from Lemma \ref{L.obs}
to write
\begin{align*}
&\frac 1{4\pi^2N} \left[ \uH_\ep^N(a_1,a_2,\ldots, a_n) -
\uH_\ep^N(\tilde a_1,a_2,\ldots, a_n) \right] \\
&\qquad\qquad\qquad\qquad =
\lambda[\xi_\ep (a_1) - \xi_\ep(\tilde a_1)] +  \frac 1N \sum_{j=2}^N\big( G(a_1,a_j) - G(\tilde a_1, a_j)\big)  \\
&\qquad\qquad\qquad\qquad =   \left[  \zeta_\lambda(a_1) +U(a_1)\right] - 
 \left[ \zeta_\lambda(\tilde a_1) +U(\tilde a_1)\right]
 \end{align*}
for 
\[
U(x) = -\vp_\lambda(x) +\frac 1N \sum_{j=2}^N G(x,a_j), \quad\mbox{ so that }
\begin{cases}
(-\Delta+1)U = -\mu_\lambda +\frac 1N\sum_{j=2}^N \delta_{a_j}
&\mbox{ in }\Omega\\
U = 0 &\mbox{ on }\partial \Omega.
\end{cases}
\]
Since $a\in \uM_{\ep,N}^*$, it follows that 
\begin{equation}\label{contain1}
[\zeta_\lambda+U](a_1) \le \min_\Omega [\zeta_\lambda+U] + \frac{ \tep}{4\pi^2 N}.
\end{equation}

We claim that 
\begin{equation}
\inf_\Omega U <0.
\label{infU}\end{equation}
To prove this, assume toward a contradiction that $U\ge 0$ in $\Omega$. Then
\[
-\int_{\partial \Omega} \nu\cdot \nabla U
=
-\int_\Omega  \Delta U \\
\le
\int_\Omega(- \Delta +1) U \\
=
-1 + \frac {N-1}N <0,
\]
using the  equation for $(-\Delta+1)U$ and the fact that 
$\mu_\lambda$ is a probability measure. 
It follows that $\nu\cdot\nabla U(x) >0$ at some $x\in \partial\Omega$.
But since $U=0$ on $\partial \Omega$, this contradicts our assumption that 
$U\ge 0$ in $\Omega$, proving \eqref{infU}.

Now the boundary condition for $U$ implies that $\min_\Omega U<0$ is attained.
This cannot occur at any $a_j, j=2,\ldots, N$ (where $U=+\infty$)
or any any other point of  $\Omega\setminus \supp(\mu_\lambda)$, where $(-\Delta+1)U=0$.
Thus all minima of $U$ are contained in 
in $\Sigma_\lambda = \supp(\mu_\lambda)$, which is exactly the  set where 
$\zeta_\lambda$ attains its minimum. It follows that
\[
\min_\Omega [\zeta_\lambda+U]
=
\min_\Omega \zeta_\lambda+\min_\Omega U
\le 
\min_\Omega \zeta_\lambda+ U(a_1).
\]
Thus we infer from \eqref{contain1} that
\begin{equation}\label{contain2}
\zeta_\lambda(a_1) \le \min_\Omega \zeta_\lambda + \frac{\tep}{4\pi^2N}.
\end{equation}
Now \eqref{contain0} follows from Lemma \ref{L.obs}. To prove this
we consider two cases:

{\bf Case 1}:  $\frac 1{\lambda_0} \le \frac 1\lambda = \frac{2\pi N}\hex \le |\Omega| - \hex^{-1/4}$.

In this case, \eqref{obstacle2} implies that
\[
m(\lambda)\ge c_2 (|\Omega| - \frac 1 \lambda)^2 \ge c_2 \hex^{-1/2} \ge c_2(2\pi\lambda_0 N)^{-1/2} \ge 2 \frac \tep {4\pi^2 N}
\]
for any $N\ge 1$, if $\tep$ is small enough. 
As a result,
\[
\zeta_\lambda(a_i) \le \min \zeta_\lambda + \frac \tep {4\pi^2 N} = - m(\lambda)+ \frac \tep {4\pi^2 N} \le  - \frac 12 m(\lambda) \le -\frac 12 c_2 (|\Omega| -  \lambda^{-1})^2.
\]
On the other hand, it is clear from \eqref{obstacle3} that 
$\zeta_\lambda(x) \ge - 2\sqrt{c_3\lambda}(|\Omega|-\lambda^{-1})d(x)$ for
all $x$. In particular,
\[
d(a_i)  \ge - C\zeta_\lambda(a_i) (|\Omega|-\lambda^{-1})^{-1}  \ge c(|\Omega|-\lambda^{-1})
\ge c\hex^{-1/4},
\]
proving \eqref{contain0}.

\medskip

{\bf Case 2}:  $\frac 1{\lambda_0} \ge \frac 1\lambda = \frac{2\pi N}\hex $.

\smallskip

Then \eqref{contain0} follows by similar arguments, using 
\eqref{obstacle2a} in place of \eqref{obstacle2}, \eqref{obstacle3}.

\bigskip

{\em Proof of \eqref{sep0}}.
We now wish to prove a lower bound for the distance from $a_1$ to another point, say $a_2$. 
First, note that
\begin{equation}
U(a_1) \le \min_\Omega U + \frac{\tep}{4\pi^2N}  \overset{\eqref{infU}} < \frac{\tep}{4\pi^2N} 
\label{sep1}
\end{equation}
exactly as with \eqref{contain2}.
Next, we let
\[
r = c_6\hex^{-1/2}
\]
for $c_6$ to be fixed below, and we decompose
\[
U = 
 \left(-\vp_{near} + \frac 1 N G(\cdot, a_2) \right) +
 \left(-\vp_{far}  +   \frac 1N \sum_{j=3}^N G(x, a_j) \right)
=:U_{near} + U_{far}
\]
where $\vp_{near}$ and $\vp_{far}$ vanish on $\partial \Omega$ and satisfy
\[
\begin{aligned}
(-\Delta+1)\vp_{near}& = \mathbf 1_{B(r,a_2)}\cdot(\lambda w_\ep -m) , \\
(-\Delta+1)\vp_{far} &=  - \mathbf 1_{\Sigma_\lambda\setminus B(r,a_2)} \cdot(\lambda w_\ep -m)
\end{aligned}
\]
where $m= m(\lambda)$.
We will show below that if  $c_6$ is sufficiently small, then
\begin{equation}
0\le \max_{\partial B_{r}(a_2)} U_{near} 
< \min_{\bar B_{r/2(a_2)}} U_{near} - \frac{\tep}{4\pi^2 N}.
\label{Ungrow}\end{equation}
Accepting this for the moment, we
assume toward a contradiction that $|a_1 - a_2| \le \frac r 2$. 
Then
\begin{align*}
U_{far}(a_1) 
= U(a_1)-U_{near}(a_1)
&\overset{\eqref{sep1}}\le
(\min_{\partial B_r(a_2)}U + \frac{\tep}{4\pi^2 N} ) - U_{near}(a_1) \\
&\le
\min_{\partial B_r(a_2)}U_{far}
+
\max_{\partial B_r(a_2)}U_{near} + \frac{\tep}{4\pi^2 N} 
 - U_{near}(a_1) \\
&\overset{\eqref{Ungrow}}< 
\min_{\partial B_r(a_2)}U_{far} 
\end{align*}
and
\[
U_{far}(a_1) 
= U(a_1)-U_{near}(a_1)
< 0 \quad\mbox{ by }\eqref{sep1}, \eqref{Ungrow}.
\]
It follows that $U_{far}$ attains a negative local minimum in $B_r(a_2)$.
But this cannot happen, by the maximum principle, since
\[
(-\Delta +1)U_{far} =
 \mathbf 1_{\Sigma_\lambda\setminus B(r,a_2)} \cdot(\lambda w_\ep -m)
+ \sum_{j\ge 3, \ \  |a_j - a_2| < r} \delta_{a_j}  \ge 0\quad\mbox{ in }B_r(a_2).
\]
Thus $|a_1-a_2| >\frac r2$, proving \eqref{sep0}.

\medskip

{\em Proof of \eqref{Ungrow}}.
Let $r= c_6\hex^{-1/2} \le \frac 12$. 

Recall from \eqref{S.def}
that 
\[
G(x,y) 
= \frac 1{2\pi} \left( - \log|x-y| + S(x,y)\right) .
\]
In addition, it follows from  \eqref{splitG}, \eqref{K2log}
that $S$ can be written as the sum of a radial $C^{1,\alpha}$ function 
and a function $\tilde S(x,y)$ that satisfies certain estimates recorded in
\eqref{tildeS.bounds}, \eqref{nytS}.
These imply that 
\begin{equation}\label{SOS}
|S(x,y)| \le C  -   \log d(y) , \qquad
|\nabla_x S(x,y)|\le C d(x)^{-1}.
\end{equation}
Since\begin{align*}
\vp_{near}(x) = \int_\Omega G(x,y)(-\Delta+1)\vp(y)\, dy =
 \int_{B(r,a_2)} G(x,y)(\lambda w_\ep(y)-m) \, dy,
\end{align*}
we can write $U_{near}$ in the form
\be 
U_{near}(x)
= - \frac 1{2\pi N}\log|x-a_2| +
\psi_1(x) +\psi_2(x)
\label{rewriteUnear}\ee
 where
 \begin{align*}
 \psi_1(x) &:= 
\frac 1 {2\pi} \int_{B(r,a_2)}\log|x-y| (\lambda w_\ep(y)-m) \, dy\,, \\
\psi_2(x) &:= \frac 1{2\pi N}S(x,a_2) - \left(  \frac 1 {2\pi}  \int_{B(r,a_2)}  S(x,y) (\lambda w_\ep(y)-m) \, dy 
 \right).
\end{align*}
We have arranged that $\hex$ is large enough that 
$w_\ep \le 2$.
Noting that $\log|x-y|<0$
for $x,y\in B_r(a_2)$,  we therefore have
\begin{equation}\label{psi1a}
 \left|\psi_1(x) \right|
 \le
\left|\frac \lambda {\pi} \int_{B(r,a_2)}\log|x - y| \, dy\right| 
\le \lambda r^2 (|\log r| + \frac 12)
\le \frac {c_6^2}{\pi N} (\log \hex +C) \ \ \ \mbox{ for }x\in B(r,a_2)
\end{equation}
where we have used the classical fact that the  integral  is maximized when $x=a_2$,
together with the choices of $\lambda$ and $r$.
Similarly
\begin{equation}\label{psi1b}
|\nabla\psi_1(x)| \le2\lambda r,\qquad\mbox{ and thus }|\psi_1(x)-\psi_1(y)|\le 4\lambda r^2 = \frac {2c_6^2}{N\pi}\ \ \ 
\mbox{ for }x,y\in B_r(a_2).
\end{equation}

Also,  it follows easily from \eqref{SOS} and \eqref{contain0} that for $x\in B_r(a_2)$, 
\begin{equation}\label{psi2a}
|\psi_2(x)| \le  \left( \frac 1{2\pi N} + \frac{c_6^2}{2N} \right)(1+\log \hex^{1/4})
\end{equation}
and
\begin{equation}\label{psi2b}
|\nabla\psi_2(x)| \le \frac C N (1+c_6^2)  \hex^{1/4}\qquad\mbox{ and thus }|\psi_2(x)-\psi_2(y)|\le \frac CN \hex^{-1/4}
\mbox{ for }x,y\in B_r(a_2).
\end{equation}

It follows from \eqref{rewriteUnear}, \eqref{psi1a}, \eqref{psi2a} that if $|x-a_2|= r = c_6\hex^{-1/2}$, then
\[
U_{near}(x) \ge \frac 1{2\pi N}( \log \hex^{1/2} - \log \hex^{1/4})  - 
 \frac {c_6^2} N (1+\log\hex)
 = \frac 1{8\pi N}\log \hex -  \frac {c_6^2} N (1+\log\hex).
\]
This is clearly positive if $c_6$ is small enough, whence the first inequality in \eqref{Ungrow}. Similarly, if $|x-a_2| = r$ and $|y-a_2|\le r/2$, then we deduce from 
\eqref{rewriteUnear}, \eqref{psi1b}, \eqref{psi2b} that 
\[
U_{near}(y) - U_{near}(x)  \ge \frac 1{2\pi N}\log 2  - \frac CN(c_6^2 + o(\hex^{-1/4})).
\]
By choosing $K_1$ large, $c_6$ small and $\tep$ small, we can therefore clearly arrange
that 
\[
U_{near}(y) - U_{near}(x) > \frac \tep{4\pi^2N},
\]
completing the proof of \eqref{Ungrow}.
\end{proof}

\section{Lower bounds and localization} \label{sec:3}

In this section, we prove some results about pairs $(u,A)$ for which there exists
$N\in \N$ and $a = (a_1,\ldots, a_N)\in \Omega^N$ such that
\begin{equation}\label{close}
\left\| J(u) - \sum_{j=1}^N \pi \delta_{a_j} \right\|_{\dot{W}^{-1,1}(\Omega)} \leq \sigma_\ep 
\end{equation}
and
$\sigma_\ep \ll \rho_a = \frac 14 \min\{ \min_{i\ne j}|a_i-a_j|, \min_i\dist(a_i,\partial \Omega) \}$.
These are all adapted from results in \cite{JSp2}  about the simplified Ginzburg-Landau functional without magnetic field.

Our first result provides very precise lower bounds for $GL_\ep(u,A)$ when
\eqref{close} holds and $\sigma_\ep$ is small enough, for suitable values
of other parameters such as $N, \rho_a$, etc.

\begin{prop}\label{P.gstab}
Let  $\Omega$ be a
bounded, open simply connected subset of $\R^2$ with $C^1$ boundary. Then there
exists absolute constants $C$ and $C_1$ with the following property:

Assume that $(u,A)\in H^1(\Omega; \C) \times H^1(\Omega; \R^2)$, 
and that the Coulomb gauge condition \eqref{coulomb} holds.
If there exist
 $a = ( a_1, \ldots, a_N ) \in \Omega^{N*}$, for some $N\ge 1$,
such that  \eqref{close} holds and
\begin{equation}  
4\ep\sqrt{\ln(\rho_a/\ep)} \le  \ 4 \sigma_\ep  \ 
 \le \  \sigma^*  \ :=  \left[ \frac{\rho_a}{N^{3}}(\sigma_\ep + \ep E_\ep(u))\right]^{1/2}
 \le \frac{\rho_a}{N C_1}
\label{gstab.h1}\end{equation}
then
\begin{equation}
GL_\ep(u,A) \ge
H_\ep^N(a) +\kappa_\ep^{GL}
- C\left[ \frac{N^{5}}{\rho_a}
( \sigma_\ep + \ep {E_\ep(u)} ) \right]^{1/2} -  \varpi , 
\label{gstab.c1}\end{equation}
where 
\begin{align}\label{varpi.def}
\varpi  = \varpi(u) &= C\left(\ep E_\ep(u)^2 + \ep \hex^4 + \sigma_\ep^{\frac 78}(E_\ep(u)+\hex^2)^{\frac 5 8}\right)\,, \ \ \mbox{ and }
\\
\label{kappaGL}
\kappa_\ep^{GL} &= 
 \hex^2 F(\xi_0) + N (\pi\ln \frac 1\ep + \gamma) .
\end{align}
Here $F(\cdot)$ was defined in \eqref{F.def}, and the definition of $\gamma$ 
 is discussed following \eqref{BBHsurplus} below.

\end{prop}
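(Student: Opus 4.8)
The plan is to reduce the statement for $GL_\ep$ to the known corresponding estimate for the simplified functional $E_\ep$ (proved in \cite{JSp2}), by splitting $GL_\ep(u,A)$ into a ``magnetic'' part governed by the induced field $B = (\nabla^\perp)^{-1}A$ and a ``vortex'' part governed by $E_\ep$ of a gauge-modified map. First I would write $GL_\ep(u,A) = \frac12\int_\Omega |(\nabla - iA)u|^2 + |\Delta B - \hex|^2 + \frac{(1-|u|^2)^2}{2\ep^2}$, using \eqref{B.def}. The guiding heuristic is that the optimal $B$ is, to leading order, $\hex$ times the solution of a linear problem with source the vorticity; concretely one expects $B \approx \hex \xi_0 + (\text{vortex corrections})$, and the quadratic structure means $\frac12\int |\Delta B - \hex|^2 + (\text{coupling})$ contributes exactly $\hex^2 F(\xi_0)$ at main order plus the Green's-function interaction terms $2\pi\hex\,\xi_0(a_i)$ and $2\pi^2 G(a_i,a_j)$ that appear in $H_\ep^N(a)$. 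I would make this rigorous by completing the square: introduce the ``optimal'' test field $B_a$ solving $(-\Delta+1)$-type equation with right side built from $\pi\sum\delta_{a_i}$ and $\hex$, expand $GL_\ep$ around it, and note the cross term is controlled because $\|Ju - \pi\sum\delta_{a_i}\|_{\dot W^{-1,1}} \le \sigma_\ep$.

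The second main step is to handle the genuinely nonmagnetic part. After the gauge/field decomposition one is left with a term of the form $E_\ep(\tilde u)$ for a map $\tilde u$ whose vorticity still satisfies \eqref{close} (the Jacobian is gauge-invariant up to controlled errors), and one applies the $E_\ep$-version of the lower bound from \cite{JSp2}: this is exactly where the constant $N(\pi\ln\frac1\ep + \gamma)$ — the self-energy of $N$ vortices plus the renormalized core constant $\gamma$ (the $\gamma$ from the Bessel-potential/BBH analysis referenced after \eqref{BBHsurplus}) — and the term $\pi S(a_i,a_i)$ emerge, and where the error $C[N^5\rho_a^{-1}(\sigma_\ep + \ep E_\ep(u))]^{1/2}$ comes from. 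Hypothesis \eqref{gstab.h1} is precisely the smallness condition under which the \cite{JSp2} machinery (vortex ball construction, lower bounds on annuli, the $\sigma^*$ threshold) applies, so I would verify that the rescaled parameters meet those hypotheses and invoke the result essentially verbatim.

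The third step is to collect the remaining discrepancies into $\varpi$. These are: (i) the error in replacing the true minimizing $B$ by $B_a$ and in the quadratic expansion, which scales like $\ep E_\ep(u)^2 + \ep\hex^4$ (the $\hex^4$ because $B \sim \hex$ and the field energy is quadratic, the $\ep$ because these are ``core-scale'' corrections), and (ii) the cross-term between the vortex field and the magnetic field, estimated by a $\dot W^{-1,1}$-versus-Lipschitz duality pairing giving something like $\sigma_\ep^{7/8}(E_\ep(u)+\hex^2)^{5/8}$ — the fractional exponents coming from an interpolation between the $L^\infty$ bound on the field (which costs a power of $E_\ep(u)$ or $\hex^2$) and the $\dot W^{-1,1}$ smallness $\sigma_\ep$, optimized. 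I would carry out this interpolation carefully since the exact exponents must match.

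The main obstacle I expect is the coupling/cross term between the magnetic and vortex contributions: unlike the pure $E_\ep$ case, $GL_\ep$ does not split cleanly, and one must show the interaction between the induced magnetic potential and the gradient of the phase of $u$ either reconstructs the desired terms of $H_\ep^N$ or is absorbable into $\varpi$. This requires a quantitative control of $\|B - B_a\|$ in an appropriate norm together with the $\dot W^{-1,1}$ bound on $Ju - \pi\sum\delta_{a_i}$, and getting the error exponents sharp enough that the final bound is useful under \eqref{scaling1}, \eqref{scaling2}. A secondary technical point is tracking gauge invariance: one must confirm that the Coulomb gauge condition \eqref{coulomb} lets one pass between $(\nabla - iA)u$ and $\nabla$ of a modified map without introducing uncontrolled errors, which is standard but must be done with the large parameters $N$ and $\hex$ in view.
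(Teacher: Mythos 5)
Your proposal follows essentially the same route as the paper's: split $GL_\ep$ via the algebraic identity $|(\nabla - iA)u|^2 = |\nabla u|^2 - 2\,j(u)\cdot A + |u|^2|A|^2$ into $E_\ep(u)$ plus a magnetic functional in $B = (\nabla^\perp)^{-1}A$, identify $\min_B[\Phi(B) - 2\pi\sum_i B(a_i)]$ with the $\xi_0$- and $G$-dependent terms of $H_\ep^N - W^N$, invoke the $E_\ep$-lower bound of \cite{JSp2} (Theorem 2), and absorb the $R(u,A)$ and $\dot W^{-1,1}$-duality remainders into $\varpi$ by exactly the $H^2$-bound-on-$B$ plus $L^1$/$\dot W^{-1,1}$-interpolation argument you sketch. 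One minor inaccuracy: there is no gauge-modified $\tilde u$ --- in the Coulomb gauge $A = \nabla^\perp B$ is not a gradient, so the identity leaves $E_\ep(u)$ (with $u$ itself) intact and the vorticity hypothesis needs no adjustment; the cross term $-2\int_\Omega B\,Ju$ is simply decomposed into $-2\pi\sum_i B(a_i)$ plus the $\dot W^{-1,1}$-controlled error.
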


The next proposition 
shows that if  $(u,A)$ satisfies \eqref{close} and
nearly attains the energy lower bound in
\eqref{gstab.c1}, then in fact \eqref{close} can be strengthened
significantly, after possibly adjusting the points
$a\in \Omega^N$ slightly.

\begin{prop}\label{P.localization}
Let $\Omega$ be a bounded, open, simply connected subset of $\R^2$
with $C^1$ boundary. Then there exist constants $C$ and $C_2$,
depending on diam$(\Omega)$, with the following property:

Assume that $(u,A)\in H^1(\Omega; \C) \times H^1(\Omega; \R^2),$ 
and that the Coulomb gauge condition \eqref{coulomb} holds.
If there exist
 $a = ( a_1, \ldots, a_N ) \in \Omega^{N*}$, for some $N\ge 1$,
such that \eqref{close} holds with
\begin{equation}\label{loc.h1}
\sigma_\ep \le \frac {\rho_a}{8C_2N^5},
\end{equation}
and if in addition 
\begin{equation}
\mbox{$E_\ep(u) \ge 1$ and }
\quad \left[ \frac{N^5}{\rho_a}
E_\ep(u) + \frac {N^{10}}{\rho_a^2}\sqrt{E_\ep(u)}
\right] \le \frac 1 \ep,
\label{Ebds}\end{equation}
then there exist $(\xi_1,\ldots, \xi_N)\in \Omega^{N*}$ such that
$|\xi_i - a_i|\le  \frac{\rho_a}{2 C_2 N^4} $ for all $i$, and 
\begin{align}
&\left\| J(u) - \pi \sum _{i=1}^N \delta_{\xi_i} \right\|_{\dot W^{-1,1} }  
\label{p2.c1}
\\
&\hspace{5em}\le \
C\ \ep\left[N (C +\widetilde \Sigma_\ep^{GL})^2
e^{\frac 1\pi\widetilde\Sigma_\ep^{GL}} + 
(C +\widetilde \Sigma_\ep^{GL}) \frac{N^5}{\rho_a}+ {E_\ep(u)}
\right]\nonumber
\end{align} 
where
\be\label{SigmaGL}
\Sigma_\ep^{GL}: = \Sigma_\ep^{GL} (u,A,a)
= GL_\ep(u,A) - \kappa_\ep^{GL} - H_\ep^N(a),
\qquad 
\widetilde \Sigma_\ep^{GL}: = \Sigma_\ep^{GL}(u,A,a)+\varpi(u).
\ee
\end{prop}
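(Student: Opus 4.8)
The plan is to adapt the localization argument of \cite{JSp2} to the magnetic setting, viewing $GL_\ep(u,A)$ as a perturbation of the non-magnetic functional $E_\ep$ acting on a suitably ``gauged'' map. The first step is to exploit the Coulomb gauge \eqref{coulomb} and the representation $A = \nabla^\perp B$ from \eqref{B.def} to rewrite $GL_\ep(u,A)$ in a form where the magnetic field decouples at leading order: concretely, split $B$ into the piece $\hex \xi_0$ generated by the induced magnetic field in the vortexless state and a correction driven by $Ju$, and use the London-type equation for the gauge to express the energy as $\hex^2 F(\xi_0)$ plus $E_\ep(u)$-like terms plus cross terms controlled by $\|Ju - \pi\sum\delta_{a_i}\|_{\dot W^{-1,1}}$ and by $\ep E_\ep(u)$, $\ep\hex^4$. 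This is precisely the bookkeeping that produces $\kappa_\ep^{GL}$ in \eqref{kappaGL} and the error $\varpi(u)$ in \eqref{varpi.def}; the factor $\hex^4$ arises from the $L^\infty$ bound on $\xi_0$ scaled by $\hex^2$ squared in the quadratic remainder, and the $\sigma_\ep^{7/8}(E_\ep(u)+\hex^2)^{5/8}$ term is a Hölder-interpolation bound on the cross term between the magnetic potential and the vorticity defect.

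Once the problem is reduced to the non-magnetic one, the second step is to invoke the core localization mechanism from \cite{JSp2}: given that $(u,A)$ satisfies \eqref{close} with $\sigma_\ep$ small relative to $\rho_a/N^5$, and given the a priori energy bound \eqref{Ebds} ensuring $\ep$ is small compared to the relevant powers of $N$ and $\rho_a$, one performs a vortex-ball construction (Jerrard--Sandier lower bounds) on annuli around each $a_i$ of radius comparable to $\rho_a/(C_2 N^4)$. The excess energy $\widetilde\Sigma_\ep^{GL}$ measures how far $(u,A)$ is from saturating the lower bound \eqref{gstab.c1}; a clearing-out / energy-concentration argument then forces $|u|$ to be close to $1$ outside small balls, with the total radius of the bad set controlled exponentially by $\widetilde\Sigma_\ep^{GL}$ (hence the $N(C+\widetilde\Sigma_\ep^{GL})^2 e^{\widetilde\Sigma_\ep^{GL}/\pi}$ factor, which is the standard ``$e^{\mathrm{excess}/\pi}$'' blow-up of the ball radius from a $\pi\log(1/r)$ lower bound). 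One then \emph{recenters}: replace each $a_i$ by the weighted centroid $\xi_i$ of $Ju$ restricted to the corresponding ball, which moves $a_i$ by at most $\rho_a/(2C_2 N^4)$, and re-estimates the $\dot W^{-1,1}$ distance of $Ju$ to $\pi\sum\delta_{\xi_i}$ using that $Ju$ is $L^1$-close to $\pi$ times a sum of approximate point masses; the $\dot W^{-1,1}$ norm of $Ju$ minus its centroid-approximation on each ball is bounded by the radius times the mass, giving the $\ep(C+\widetilde\Sigma_\ep^{GL})N^5/\rho_a$ and $\ep E_\ep(u)$ contributions.

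The main obstacle I expect is the honest propagation of the magnetic error terms through the lower-bound / localization machinery of \cite{JSp2}, which was written for $E_\ep$ alone. In particular one must check that (i) the gauge-reduction step produces errors that are genuinely of the form $\varpi(u)$ with the stated exponents, which requires care with the Hölder interpolation $\|A\|_{L^q}\lesssim \|A\|_{L^2}^{1-\theta}\|\nabla\times A\|_{L^2}^{\theta}$ or a similar Gagliardo--Nirenberg estimate for the cross term, together with $\|\nabla\times A - \hex\|_{L^2}^2 \le 2GL_\ep$; and (ii) that after reduction, the a priori bound \eqref{Ebds} on $E_\ep(u)$ is exactly what is needed to keep all the ball-construction error terms (which scale like $\ep$ times powers of $N/\rho_a$ and $E_\ep(u)$) below the threshold where the annular lower bounds are valid. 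Verifying that no new power of $\hex$ sneaks in through the $B$-field correction — i.e.\ that the correction to $\hex\xi_0$ driven by $N\sim$ vortices is small in the relevant norm because $N \ll \hex$ under \eqref{scaling2} — is the delicate quantitative point. I would handle this by isolating the map $w := u\, e^{-i\phi}$ for a real phase $\phi$ chosen so that $\nabla\phi$ absorbs the smooth part of $A$, reducing $|(\nabla-iA)u|^2$ to $|\nabla w|^2$ modulo controlled remainders, and then quoting Proposition \ref{P.gstab} itself (already proved in the excerpt's framework) to supply the lower bound against which $\Sigma_\ep^{GL}$ is measured, so that the remaining work is purely the recentering estimate for the Jacobian.
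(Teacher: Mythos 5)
Your overall strategy -- reduce to the non‑magnetic functional and then invoke the localization machinery of \cite{JSp2} -- is the same as the paper's, and your first paragraph correctly anticipates the bookkeeping that produces $\kappa_\ep^{GL}$ and $\varpi$. However, the concrete mechanism you propose for the reduction, namely writing $w := u e^{-i\phi}$ with $\phi$ chosen so that $\nabla\phi$ absorbs the smooth part of $A$, is a genuine gap. In the Coulomb gauge \eqref{coulomb} the potential $A$ is already divergence‑free and tangential, so there is no gradient part to absorb; and, more fundamentally, replacing $u$ by $u e^{-i\phi}$ with a non‑constant $\phi$ changes the Jacobian (one has $Jw = Ju - \tfrac12\nabla\times(|u|^2\nabla\phi)$), so the constraint \eqref{close} on $Ju$ -- which is the entire object the proposition is about -- is no longer available for $w$. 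The paper keeps $u$ fixed throughout and never touches $Ju$: it starts from the purely algebraic identity $GL_\ep(u,\nabla^\perp B) = E_\ep(u) - 2\int_\Omega B\,Ju + \Phi(B) + R(u,\nabla^\perp B)$, estimates the cross and remainder terms to obtain $\bigl|\min_A\Sigma_\ep^{GL}(u,A,a) - \Sigma_\ep^{BBH}(u,a)\bigr| \le \varpi(u)$ (Lemma~\ref{L.reduction}), and then quotes \cite{JSp2}, Theorem~3, verbatim.

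Two further points. First, the reduction quietly relies on the exact identity \eqref{WHrelation}, namely $H_\ep^N(a) + \hex^2 F(\xi_0) = \min_{B}\bigl[\Phi(B)-2\pi\sum_i B(a_i)\bigr] + W^N(a)$, which is proved by an explicit PDE computation (Lemma~\ref{lem.rewrite1}); your decomposition of $B$ into $-\hex\xi_0$ plus a vortex‑driven piece is the right idea, but without this exact identity one does not know that the two notions of excess energy, $\Sigma_\ep^{GL}$ and $\Sigma_\ep^{BBH}$, differ only by $\varpi$ rather than by a further configuration‑dependent term. Second, you spend most of the second paragraph re‑deriving the vortex‑ball, clearing‑out, and recentering steps of \cite{JSp2}, Theorem~3. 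That route is not wrong, but it is unnecessary and much longer: once one has the one‑sided inequality $\Sigma_\ep^{BBH}(u,a)\le\widetilde\Sigma_\ep^{GL}(u,A,a)$ (which is all the proposition needs, since the right‑hand side of \eqref{p2.c1} is increasing in $\widetilde\Sigma_\ep^{GL}$), the hypotheses of Proposition~\ref{P.localization} and of \cite{JSp2}, Theorem~3 coincide exactly, so the conclusion follows with no new vortex‑ball analysis. If you fix the reduction step so it does not alter $Ju$, the rest of your argument becomes a correct, if redundant, re‑proof of the cited theorem.
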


Results very much like Proposition \ref{P.gstab} 
are proved in \cite{KSp}, Theorem 4.1,
but with the leading terms on the right-hand side of
\eqref{gstab.c1} appearing in a different form 
that is not well-suited to our purposes. 
There does not seem to be a counterpart of Proposition
\ref{P.localization} in \cite{KSp}.


\subsection{ A reduction}

Both propositions are proved by reducing them 
to results in \cite{JSp2}. These
relate 
the simplified Ginzburg-Landau energy $E_\ep$
to the {\em renormalized energy}  $W^N:\Omega^{N*}\to \R$,
introduced in the pioneering
book of Bethuel, Brezis and H\'elein \cite{BBH}, and
defined in our context by
\begin{equation}
W^N(a) = -\pi\sum_{i\neq j} \log |a_i - a_j|  + \pi\sum_{i , j} R(a_i, a_j).
\label{W.def0}\end{equation}
Here $R(\cdot, \cdot)$ is the regular part of the Green's function for
the Laplace operator on $\Omega$, {\em i.e.}
\[
R(x,y) = 2\pi \Gamma(x,y) + \log|x-y|,
\]
where $\Gamma  = \Gamma(x,y)$ is characterized by
\[
-\Delta_x\Gamma = \delta_y\mbox{ for }x\in\Omega,\qquad
\Gamma(x,y)= 0\mbox{ for }x\in \partial \Omega\, .
\]
The results we will use from \cite{JSp2} involve
the quantity
\begin{equation}\label{BBHsurplus}
\Sigma_\ep^{BBH} = \Sigma_\ep^{BBH} (u,a) = E_\ep(u) - \kappa_\ep^{BBH} - W^N(a), \qquad \mbox{ for } \ \kappa_\ep^{BBH} := N(\pi \log\frac 1 \ep +\gamma).
\end{equation}
where $\gamma$ is the same constant appearing in \eqref{kappaGL},
whose definition (which we will not actually need) can be
found in \cite{BBH}, Lemma IX.1, where the constant was first introduced.

The reduction of Propositions \ref{P.gstab} and \ref{P.localization}
to results from \cite{JSp2} will be carried out by proving,
roughly speaking,  that $\Sigma_\ep^{GL}(u,A,a)
\approx \Sigma_\ep^{BBH}(u,a)$ in the regimes we are interested in.
More precisely, we will prove

\begin{lemma}
Assume that $u\in H^1(\Omega;\C)$ satisfies
\eqref{close} and that $N \le  C \hex$.
\begin{align}
\left|\min_{A {\scriptsize \ satisfying \eqref{coulomb}}} \Sigma^{GL}_\ep(u,A,a)
- 
\Sigma^{BBH}_\ep(u,a) \right|
\le
\varpi(u) . 
\label{GLandE0}
\end{align}
\label{L.reduction}
\end{lemma}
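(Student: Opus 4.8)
\textbf{Proof plan for Lemma \ref{L.reduction}.}
The goal is to compare the full Ginzburg-Landau surplus $\Sigma^{GL}_\ep(u,A,a)$, minimized over gauge fields $A$ satisfying \eqref{coulomb}, with the magnetic-field-free surplus $\Sigma^{BBH}_\ep(u,a)$, and to show they differ by at most $\varpi(u)$. The natural first step is to fix $u$ and carry out the minimization over $A$ explicitly. Writing $A = \nabla^\perp B$ as in \eqref{B.def}, the magnetic terms in $GL_\ep(u,A)$ become a quadratic functional in $B$, and the cross term couples $B$ to the current $j(u) = (iu,\nabla u)$; minimizing in $B$ produces an Euler--Lagrange equation of the form $(-\Delta + |u|^2)B = \text{curl}\,j(u) + \hex$ (schematically), whose solution I would decompose as $B = \hex B_0 + (\text{correction})$, where $B_0$ solves the linearized problem associated with the Meissner state. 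Substituting back, $\min_A GL_\ep(u,A)$ splits as $E_\ep(u) + \hex^2 F(\xi_0) + (\text{vortex--field interaction}) + (\text{error})$, where the interaction term is what must be matched against the $2\pi\hex\xi_0(a_i)$ contributions inside $H_\ep^N$.

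The second step is to evaluate that vortex--field interaction term using the hypothesis \eqref{close}, which says $J(u) \approx \pi\sum\delta_{a_i}$ in $\dot W^{-1,1}$. The interaction term is, up to lower-order pieces, a pairing of $Ju$ (or equivalently $\text{curl}\,j(u)$) against $B_0$, i.e. against $\hex\xi_0$; replacing $Ju$ by $\pi\sum\delta_{a_i}$ costs an error controlled by $\|Ju - \pi\sum\delta_{a_i}\|_{\dot W^{-1,1}}\cdot\|\nabla(\hex\xi_0)\|_\infty \lesssim \sigma_\ep\hex$, which should be absorbed into $\varpi$ via the $\sigma_\ep^{7/8}(E_\ep(u)+\hex^2)^{5/8}$ term (after interpolation, using $\sigma_\ep \le C\ep^{1/2}$). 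The remaining discrepancies between $H_\ep^N(a)$ — which uses the Bessel Green's function $G$ and regular part $S$ — and the $W^N(a)$ appearing in $\Sigma^{BBH}$ — which uses the Laplace Green's function $\Gamma$ and $R$ — are genuinely present but are exactly the $2\pi^2\sum_{i\ne j}G(a_i,a_j)$ versus $W^N$ type terms; these are handled by noting that $G = \frac{1}{2\pi}(-\log|\cdot| + S)$ mirrors $\Gamma = \frac{1}{2\pi}(-\log|\cdot| + R)$, so the logarithmic singularities cancel in both $H_\ep^N$ and $W^N$, and what is left is a smooth bounded discrepancy that, crucially, is \emph{designed} to be part of the definition of $H_\ep^N$ rather than an error — in other words one checks that the $B_0$-substitution reproduces precisely $2\pi\hex\xi_0(a_i) + \pi S(a_i,a_i) + 2\pi^2\sum_{i\ne j}G(a_i,a_j)$ up to $\varpi$.

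The third step is to bound the genuine error terms generated by the $A$-minimization: the correction to $B$ beyond $\hex B_0$ is driven by $\text{curl}\,j(u)$ and by $(1-|u|^2)$, and its energy cost is quadratic, giving contributions like $\ep E_\ep(u)^2$ and $\ep\hex^4$ (the latter from the $\hex^2$-scale terms interacting with the $O(\ep)$-scale defect region); these are exactly the first two terms in $\varpi$ as defined in \eqref{varpi.def}. One also needs $|u|\le 1+o(1)$ or an $L^4$-type control on $u$, which follows from the standard a priori bound $E_\ep(u)\le C\hex^2$ established elsewhere in the paper, together with the hypothesis $N\le C\hex$ to keep the number of vortices from inflating the combinatorial factors. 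I expect the main obstacle to be the careful accounting in step two: separating the "structural" part of the vortex--field interaction (which is absorbed into the definition of $H_\ep^N$ and must come out with exactly the right constants $2\pi\hex$ and $\pi$ and $2\pi^2$) from the "error" part (which must be dominated by $\varpi$), without double-counting and while tracking that the replacement of $Ju$ by the atomic measure is done in the correct norm with the correct test function regularity. This is a bookkeeping-heavy computation adapted from \cite{JSp2}, and the delicate point is ensuring all the $\hex$- and $N$-dependent factors land inside the stated form of $\varpi$ rather than producing a spurious larger error.
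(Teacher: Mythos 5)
Your overall plan is in the right spirit, but the route you propose diverges from the paper's in a way that creates a real obstacle, and one of your error bounds is not quite the right tool.

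The paper never solves a $u$-dependent Euler--Lagrange equation for $B$. Instead it starts from the purely algebraic identity $GL_\ep(u,\nabla^\perp B) = E_\ep(u) - 2\int B\,Ju + \Phi(B) + R(u,\nabla^\perp B)$, where $\Phi(B) = \frac12\int|\nabla B|^2 + (\Delta B + \hex)^2$ is $u$-independent and $R(u,A) = \frac12\int(|u|^2-1)|A|^2$ is treated as a pure error. This lets the paper compare $GL_\ep(u,\nabla^\perp B)$ with $E_\ep(u) + \Phi(B) - 2\pi\sum B(a_i)$, up to the two error terms $R(u,\nabla^\perp B)$ and $2\int B(Ju - \pi\sum\delta_{a_i})$, and then (Lemma~\ref{lem.rewrite1}) compute $\min_B[\Phi(B) - 2\pi\sum B(a_i)]$ \emph{exactly}, since this is a fixed linear--quadratic problem whose minimizer solves a fourth-order PDE with $\delta$-sources. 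This exact computation is how the $2\pi\hex\xi_0(a_i)$, $\pi S(a_i,a_i)$, and $2\pi^2 G(a_i,a_j)$ terms emerge with the correct constants, and also how the $R$ vs.\ $S$, $\Gamma$ vs.\ $G$ bookkeeping is settled --- by checking a PDE identity $B_1 = \sum_j (R(\cdot,a_j) - S(\cdot,a_j))$, not by informal cancellation. Your plan of solving the genuine EL equation ``$(-\Delta + |u|^2)B = \mathrm{curl}\,j(u) + \hex$'' (which is in fact fourth order once you work in $B$, and nonlinear through $|u|^2$) and decomposing $B = \hex B_0 + \text{correction}$ is substantially harder: you would then need quantitative control on the correction, and on the pairing of $Ju - \pi\sum\delta_{a_i}$ against that correction, which is exactly the part you leave vague. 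The paper's comparison-functional trick sidesteps all of this.

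The second concrete gap is your duality bound $\|Ju - \pi\sum\delta_{a_i}\|_{\dot W^{-1,1}}\cdot\|\nabla(\hex\xi_0)\|_\infty \lesssim \sigma_\ep\hex$. Pairing against the smooth $\hex\xi_0$ only is fine, but the function you must actually pair against is the full $B$ achieving the minimum, which is only in $H^2$ (so just barely \emph{not} in $W^{1,\infty}$ in two dimensions). The paper resolves this via interpolation: $\|Ju - \pi\sum\delta_{a_i}\|_{\dot W^{-1,m/(m-1)}} \le \sigma_\ep^{1-2/m}\|Ju - \pi\sum\delta_{a_i}\|_{L^1}^{2/m}$, combined with the $L^1$ bound $\|Ju\|_{L^1} \le C E_\ep(u)$ and the Sobolev embedding $H^2 \hookrightarrow W^{1,m}$ for finite $m$. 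Choosing $m = 16$ yields precisely the $\sigma_\ep^{7/8}(E_\ep(u)+\hex^2)^{5/8}$ term in $\varpi$; a naive $\dot W^{1,\infty}$ pairing would not produce this form and would fail to be controllable for the nonsmooth parts of $B$.

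So: your proposal identifies the right three building blocks (minimize in $B$, interaction-term bookkeeping, error control), but you should replace the nonlinear EL route by the algebraic split and the $u$-independent comparison functional $\Phi(B) - 2\pi\sum B(a_i)$, and replace the $\dot W^{1,\infty}$ duality by the interpolation argument. These two changes are not cosmetic --- they are the mechanism by which the paper avoids errors that would otherwise be uncontrolled.
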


If this is known, both propositions follow essentially by transcribing results from \cite{JSp2}:


\begin{proof}[Proof of Proposition \ref{P.gstab}.]
This is a direct consequence of  Lemma \ref{L.reduction} and  \cite{JSp2}, Theorem 2.
Indeed, the hypotheses of Proposition \ref{P.gstab} imply those of \cite{JSp2}, Theorem 2, and one of the conclusions of that result is the lower bound
\[
E_\ep(u) \ge
W^N(a) +\kappa_\ep^{BBH} +\mbox{ other positive terms}
- C\left[ \frac{N^{5}}{\rho_a} 
( \sigma_\ep + \ep {E_\ep(u)} ) \right]^{1/2} .
\]
This and \eqref{GLandE0} immediately imply \eqref{gstab.c1}.
(In fact they imply a stronger result, one that we have not recorded
here, as we do not need the extra positive terms. See \cite{JSp2} or 
\cite{KSp} for more.)
\end{proof}

\begin{proof}[Proof of Proposition \ref{P.localization}.]
This follows immediately from Lemma \ref{L.reduction} and  \cite{JSp2}, Theorem 3. Indeed, Proposition \ref{P.localization} and the cited result from \cite{JSp2}
have exactly the same hypotheses. The conclusion of the result in \cite{JSp2}
is \eqref{p2.c1},  but with $\Sigma_\ep^{BBH}(u,a)$ in place of 
$\widetilde \Sigma_\ep^{GL}(u,A,a)$. Since Lemma \ref{L.reduction}
implies that $\Sigma_\ep^{BBH}(u,a) \le  \widetilde \Sigma_\ep^{GL}(u,A,a)$,
the result follows.\end{proof}

\subsection{Proof of Lemma \ref{L.reduction}}

The proof has two main steps. The first is to show that
$ \Sigma_\ep^{BBH}(u,a)$ is 
close to
\begin{equation}\label{wtSigma}
\widetilde \Sigma_\ep^{GL}(u,a)
: = 
GL_\ep^{min}(u) - \min_{B\in H^2\cap H^1_0} \left( \Phi(B)  - 2\pi \sum_{i=1}^N B(a_i) \right)
- W^N(a)
-
\kappa_\ep^{BBH} ,
\end{equation}
where 
\be
GL_\ep^{min}(u) := 
\min_{A\mbox{ \scriptsize satisfying \eqref{coulomb}}} GL_\ep(u,A)
,\quad\mbox{ and }\quad\Phi(B )
:=  \frac 12 \int_\Omega  |\nabla^\perp B|^2 + (\Delta B  + \hex )^2.
\label{some.defs}
\ee
We will then check that 
\[
\min_{A\mbox{ \scriptsize satisfying \eqref{coulomb}}}
 \Sigma_\ep^{GL}(u,A,a)
=
\widetilde \Sigma_\ep^{GL}(u,a).
\]

\begin{lemma}\label{L.tildered}Assume that $u\in H^1(\Omega;\C)$ satisfies
\eqref{close} and that $N \le  \hex|\Omega|/2\pi$. Then
for any $\theta\in (0,1)$
\begin{align}\label{GLandE}
\left|\widetilde \Sigma^{GL}_\ep(u,a)
- 
\Sigma^{BBH}_\ep(u,a) \right|
\le \varpi\, . 
\end{align}
The implicit constants in \eqref{GLandE} depend on $\Omega, \theta$ and the assumed bound
on  $N/\hex$.
\end{lemma}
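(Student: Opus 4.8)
\textbf{Proof strategy for Lemma \ref{L.tildered}.}
The plan is to express $GL_\ep^{min}(u)$, the minimum of $GL_\ep(u,\cdot)$ over $A$ in Coulomb gauge, in a form that decouples the ``energy of $u$ alone'' from the ``magnetic field correction,'' and then to show that the latter is approximated, up to the stated error $\varpi$, by the purely potential-theoretic expression $\min_B(\Phi(B) - 2\pi\sum_i B(a_i))$ appearing in \eqref{wtSigma}. Concretely, writing $A = \nabla^\perp B$ with $B\in H^2\cap H^1_0(\Omega)$ as in \eqref{B.def}, one has $|(\nabla - iA)u|^2 = |\nabla u|^2 - 2 A\cdot (iu,\nabla u) + |A|^2|u|^2$, where $(iu,\nabla u) = u^1\nabla u^2 - u^2\nabla u^1$ is the usual current. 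The key identity is that $\nabla\times (iu,\nabla u) = 2 Ju$, so after an integration by parts the cross term becomes $-2\int_\Omega B\, \nabla\times(iu,\nabla u)\, /\, 2 = -2\int_\Omega B\, Ju$ up to boundary terms that vanish because $B\in H^1_0$. Replacing $Ju$ by $\pi\sum_i\delta_{a_i}$ produces exactly the term $-2\pi\sum_i B(a_i)$, and the error incurred is controlled by $\|Ju - \pi\sum\delta_{a_i}\|_{\dot W^{-1,1}}\le\sigma_\ep$ paired against $\|\nabla B\|_\infty$ (or a Lipschitz bound on $B$), which is where a piece of $\varpi$ — the $\sigma_\ep^{7/8}(E_\ep(u)+\hex^2)^{5/8}$ term — will come from, after interpolating the $W^{1,\infty}$-type norm of the minimizing $B$ in terms of its natural $H^2$/energy bound.

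Next I would handle the remaining quadratic-in-$A$ discrepancies. The terms $|A|^2|u|^2$ and the difference between $|A|^2|u|^2$ and $|A|^2$ (equivalently $|A|^2(|u|^2-1)$) are lower-order: the first gets absorbed into the definition of $\Phi$ once we also note $|\nabla\times A - \hex|^2 = |\Delta B + \hex|^2$, so that $\frac12\int |\nabla^\perp B|^2 + |\Delta B + \hex|^2 = \Phi(B)$ exactly; the second is an error bounded using $\||u|^2-1\|_{L^2}\lesssim \ep\sqrt{E_\ep(u)}$ and an $L^\infty$ or $L^4$ bound on the optimal $A$, giving contributions of the form $\ep E_\ep(u)^2$ and $\ep\hex^4$ — precisely the first two summands of $\varpi$ in \eqref{varpi.def}. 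Collecting everything, $GL_\ep^{min}(u) = E_\ep(u) + \min_B\big(\Phi(B) - 2\pi\sum_i B(a_i)\big) + O(\varpi)$, and subtracting $W^N(a) + \kappa_\ep^{BBH}$ from both sides and recalling the definitions \eqref{wtSigma} and \eqref{BBHsurplus} yields \eqref{GLandE}. The hypothesis $N\le\hex|\Omega|/2\pi$ enters to ensure the various $N$-versus-$\hex$ trade-offs in the error terms stay under control, consistent with $\varpi$ as written.

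There is one subtlety I would be careful about: the minimization over $A$ in $GL_\ep^{min}(u)$ is genuinely coupled to $u$ through the $|A|^2|u|^2$ term, so the minimizing $A$ is not literally the minimizer of $\Phi(B) - 2\pi\sum B(a_i)$. The clean way around this is to use the minimization in two directions — plug the minimizer of $\Phi(B) - 2\pi\sum B(a_i)$ into $GL_\ep(u,\cdot)$ for the upper bound on $GL_\ep^{min}(u)$, and for the lower bound drop the nonnegative coupling term $|A|^2(|u|^2-1)$ only where it has a favorable sign, estimating the unfavorable part by the $\ep$-small quantities above. Both directions land within $O(\varpi)$, giving the two-sided bound.

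\textbf{Main obstacle.} The hard part will be obtaining the a priori $W^{1,\infty}$ (or Lipschitz) control on the minimizing $B$ that is needed to pair against the $\dot W^{-1,1}$ bound $\sigma_\ep$, and doing so with the right powers of $E_\ep(u)$ and $\hex^2$ so that the resulting error matches the $\sigma_\ep^{7/8}(E_\ep(u)+\hex^2)^{5/8}$ term in \eqref{varpi.def} rather than something weaker; this requires elliptic regularity for the equation satisfied by $B$ (essentially $\Delta^2 B$ controlled by the current and $\hex$) combined with an interpolation between the energy bound and higher regularity. Everything else is bookkeeping with integrations by parts and Cauchy–Schwarz.
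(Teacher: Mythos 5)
Your overall architecture is the same as the paper's: start from the algebraic identity $GL_\ep(u,\nabla^\perp B) = E_\ep(u) - 2\int_\Omega B\,Ju + \Phi(B) + R(u,\nabla^\perp B)$, then obtain a two-sided comparison between $\widetilde\Sigma^{GL}_\ep$ and $\Sigma^{BBH}_\ep$ by evaluating against two competitors --- for the upper bound, the minimizer $\beta$ of $\Phi(B)-2\pi\sum_i B(a_i)$ (which the paper shows satisfies $\Phi(\beta)\le C\hex^2$ via \eqref{gdi}, \eqref{Phi-sumB}); for the lower bound, the actual minimizer $B_*$ of $B\mapsto GL_\ep(u,\nabla^\perp B)$ (controlled in $H^2$ by $\sqrt{E_\ep(u)}+\hex$ via \eqref{BH2}, \eqref{need.soon}). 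Your estimate of $R(u,\nabla^\perp B)$ as $O(\ep(E_\ep(u)^2+\hex^4))$ via Cauchy--Schwarz and the Sobolev embedding $H^2\hookrightarrow W^{1,4}$ is exactly \eqref{R.est}.

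The gap is precisely the one you flag as the ``main obstacle,'' and it is genuine. Your plan pairs $\|Ju - \pi\sum_i\delta_{a_i}\|_{\dot W^{-1,1}}\le\sigma_\ep$ against $\|\nabla B\|_\infty$, but in two dimensions $H^2$ does \emph{not} embed in $W^{1,\infty}$, and no such bound is available for $B_*$ (whose elliptic equation is coupled to $u$ through the $|u|^2|A|^2$ term); the vague appeal to ``higher regularity and interpolation'' is not something one gets for free from the energy. The paper's resolution moves the interpolation to the other side of the pairing: instead of upgrading $B$, one downgrades the Jacobian discrepancy. The key observation (see \eqref{enough}--\eqref{Jdiff}) is that one also has the cheap bound $\|Ju-\pi\sum_i\delta_{a_i}\|_{L^1}\le CE_\ep(u)+\pi N$, and interpolating between the $\dot W^{-1,1}$ and $L^1$ norms gives, for any $m>2$,
\[
\big\|Ju-\pi\textstyle\sum_i\delta_{a_i}\big\|_{\dot W^{-1,m/(m-1)}}\le \sigma_\ep^{1-2/m}\big(CE_\ep(u)+\pi N\big)^{2/m},
\]
which pairs with the $H^2\hookrightarrow W^{1,m}$ bound on $B$ (valid for every finite $m$). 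With $m=16$ and $N\lesssim\hex$ this produces exactly the $\sigma_\ep^{7/8}(E_\ep(u)+\hex^2)^{5/8}$ term in $\varpi$. So your strategy is structurally correct, but the proof of the Jacobian-pairing error term as written cannot be completed; replacing the $W^{1,\infty}$ estimate on $B$ by the interpolation of the Jacobian discrepancy is the missing idea.
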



\begin{proof}
{\em Step 1. Preliminaries.}
We start from the algebraic identity
\begin{equation}
GL_\ep(u,A) 
\ = \ E_\ep(u)
\ - \ \int_{\Omega} j(u)\cdot A 
+\frac 12 \int_\Omega  |A|^2 + |\nabla\times A - \hex |^2
+ R(u,A)
\label{G.split}\end{equation}
where
\begin{equation}
R(u,A):= \frac 12 \int_\Omega (|u|^2-1)|A|^2.
\label{R.def}\end{equation}
We rewrite in terms of $B = (\nabla^\perp)^{-1}A$,
see \eqref{B.def},
then integrate by parts to find that
 \begin{equation}\label{GsplitB}
GL_\ep(u,\nabla^\perp B) 
\ = \ E_\ep(u)
\ - \ 2 \int_{\Omega}B  \, Ju
\ + \ 
\Phi(B ) + R(u, \nabla^\perp B).
\end{equation}
Thus
\[
GL_\ep(u,\nabla^\perp B) - \left[ E_\ep(u) + \left( \Phi(B) - 2\pi \sum_{i=1}^N B(a_i)
\right)\right]
= - R(u,\nabla^\perp B) -  2\int_\Omega B\left(Ju  - \pi \sum \delta_{a_i} \right).
\]

\medskip

{\em Step 2: lower bound for $\widetilde\Sigma_\ep^{GL}(u,a) $ }. 
We now prove that
\begin{equation}\label{SversusS1}
\widetilde\Sigma_\ep^{GL}(u,a)  - \Sigma_\ep^{BBH}(u,a) \gtrsim 
- \ep \left(E_\ep(u)^2 + \hex^4 \right) -   \sigma_\ep ^{1-\frac 2m}(E_\ep(u)+\hex^2)^{\frac 12 + \frac 2m}.
\end{equation}
For this, let $A_* = \nabla^\perp B_*$ minimize $B\mapsto GL_\ep(u,\nabla^\perp B)$,
so that $GL_\ep^{min}(u) = GL_\ep(u, \nabla^\perp B_*)$.
Then
\be\label{BH2}
E_\ep(u) + \frac 12|\Omega| \hex^2 = GL_\ep(u,0) \ge GL_\ep(u,\nabla^\perp B_*) \ge \int_\Omega |\Delta B_* +\hex|^2.
\ee
Since $B_*=0$ on $\partial \Omega$, basic elliptic estimates (see for example \cite{Evans}, Section 6.3.2, Theorem 4 and the remark that follows)
imply that
\begin{equation}\label{need.soon}
\|  B_* \|_{H^2} \le C \|\Delta B_* \|_{L^2} \le C (\| \Delta B_* + \hex\|_{L^2} + \hex)
\le C ( \sqrt{E_\ep(u) }+ \hex).
\ee
Since $W^{1,m}$ embeds into $H^2$ for every $m<\infty$ (and since we always assume $\hex\ge 1$), it follows that
\be\label{R.est}
|R(u, \nabla^\perp B_*)| \le   \int_\Omega \frac{(|u|^2-1)^2}{\ep} + \ep |\nabla^\perp B_*|^4\le 
C \ep \left( E_\ep(u)^2+  \hex^4\right).
\ee
For the remaining term, for any $m>2$ we estimate
\be\label{enough}
\left|\int B_* \ \left( Ju - \pi \sum_{i=1}^N \delta_{a_i} \right) \right|
\le
 \| Ju -  \pi \sum_{i=1}^N \delta_{a_i} \|_{\dot W^{-1,\frac m{m-1}}} \ \|  B_* \|_{\dot W^{1,m}_0}.
\ee
For any $m>2$, an interpolation inequality and \eqref{close} yield
\begin{align}
 \| Ju -  \pi \sum_{i=1}^N \delta_{a_i} \|_{\dot W^{-1,\frac m{m-1}}}
& \le
\| Ju -  \pi \sum_{i=1}^N \delta_{a_i} \|_{\dot W^{-1,1}}^{1 - \frac 2m}
\| Ju -  \pi \sum_{i=1}^N \delta_{a_i} \|_{L^1}^{\frac 2m}\nonumber \\
& \le
\sigma_{\ep}^{1-\frac 2m} \ 
\| Ju -  \pi \sum_{i=1}^N \delta_{a_i} \|_{L^1}^{\frac 2m}\nonumber\\
&\le
C \sigma_\ep^{1-\frac 2m} \left( E_\ep(u) + \pi N\right)^{\frac 2m},
\label{Jdiff}\end{align}
since 
\[
\| Ju -  \pi \sum_{i=1}^N \delta_{a_i} \|_{L^1} \le
\| Ju \|_{L^1} + \|  \pi \sum_{i=1}^N \delta_{a_i} \|_{L^1} \le C E_\ep(u) + \pi N.
\]
Since $N\le \hex|\Omega|/2\pi$,
it follows that 
\begin{equation}\label{laters}
\begin{aligned}
GL_\ep^{min}(u) = GL(u, \nabla^\perp B_*)& \ge E_\ep(u) + \left( \Phi(B_*)-2\pi \sum_{i=1}^N B_*(a_i)\right) \\
&\qquad - C\ep \left(E_\ep(u)^2 + \hex^4 \right) -  C \sigma_\ep ^{1-\frac 2m}(E_\ep(u)+\hex^2)^{\frac 12 + \frac 2m}.
\end{aligned}
\end{equation}
Subtracting $W^N(a)+\kappa_\ep^{BBH}$ from both sides and rearranging,
we obtain \eqref{SversusS1}

\medskip

{\em Step 3: upper bound for $\widetilde\Sigma_\ep^{GL}(u,a) $}.
The opposite inequality is proved in a very similar way.
First, for any $B$, 
\[
\| B\|_{H^2} \le C \|\Delta B\|_{L^2} \le C (\| \Delta B + \hex\|_{L^2} + \hex)
\le C ( \sqrt{\Phi(B)} + \hex).
\]
Thus, using  the bound $N\le \hex|\Omega|/2\pi$,
\begin{equation}\label{gdi}
\Phi(B)-2\pi \sum B(a_i) \ge \Phi(B) - NC  ( \sqrt{\Phi(B)} + \hex) . 
\ee
It follows from this and elementary inequalities, together with our standing assumption $N\le C|\Omega|$, that
\be\label{Phi-sumB}
\Phi(B)-2\pi \sum B(a_i) \ge  - C\hex^2 .
\ee
Thus standard lower semicontinuity arguments imply that the minimum
of the left-hand side is attained. Let $\beta$ denote a minimizer. 
It is clear that $\Phi(\beta)-2\pi \sum \beta(a_i)\le 0$, since
otherwise we could decrease the value of the functional by taking $\beta=0$.
Then 
it follows from \eqref{gdi} that 
\be\label{beta.est1}
\Phi(\beta) \le C \hex^2, \qquad \mbox{ and hence } \quad \| \beta\|_{H^2}^2 \le C\hex^2.
\ee
Thus $\beta$ satisfies the same estimate as $B_*$ in \eqref{need.soon} --- a slightly stronger estimate actually, although we will not use the  improvement. 
We can thus estimate the error terms exactly as above
to conclude that
\[
\begin{aligned}
GL_\ep^{min}(u) \le GL(u, \nabla^\perp \beta) &\le E_\ep(u) +  \Phi(\beta) - 2\pi \sum_{i=1}^N \beta(a_i)\\
&\qquad 
+ C\ep \left(E_\ep(u)^2 + \hex^4 \right)  +  C \sigma_\ep ^{1-\frac 2m}(E_\ep(u)+\hex^2)^{\frac 12 + \frac 2m}.
\end{aligned}
\]
Recalling the choice of $\beta$ and rewriting as above, this implies that 
\[
\widetilde\Sigma_\ep^{GL}(u,a)  - \Sigma_\ep^{BBH}(u,A) \lesssim 
- \ep \left(E_\ep(u)^2 + \hex^4 \right) -   \sigma_\ep ^{1-\frac 2m}(E_\ep(u)+\hex^2)^{\frac 12 + \frac 2m}.
\]
Choosing $m = \frac 1 {16}$ completes the proof.
\end{proof}

To finish the proof of Lemma \ref{L.reduction},
we will rewrite $\widetilde\Sigma_\ep^{GL}(u,a)$, which was defined in
\eqref{wtSigma}, \eqref{some.defs}. 
Toward this end, 
we fix $a= (a_1,\ldots, a_N)\in \Omega^N$, and we
let $\beta \in H^2\cap H^1_0(\Omega)$ denote the unique minimizer
of
\[
B\mapsto   \Phi(B ) -2 \pi \sum_{i=1}^N  B(a_i) .
\]
We want to find a simple expression for $\Phi(\beta) - 2\pi \sum_{i=1}^N \beta(a_i)$.

\begin{lemma}$\beta$ belongs to $W^{3,p}(\Omega)$ for every $p<2$, and
satisfies
\begin{align*}
\Delta^2 \beta -\Delta \beta &= 2 \pi \sum_{i=1}^N \delta_{a_i}\mbox{ in }\Omega,
 \qquad &\beta=0\mbox{ on }\partial \Omega,
 \qquad &-\Delta \beta=\hex\mbox{ on }\partial \Omega.
\end{align*}
\end{lemma}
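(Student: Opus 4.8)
The goal is to identify the Euler--Lagrange equation satisfied by the minimizer $\beta$ of the functional
\[
J(B) := \Phi(B) - 2\pi \sum_{i=1}^N B(a_i) = \frac 12 \int_\Omega |\nabla^\perp B|^2 + (\Delta B + \hex)^2 \, dx - 2\pi \sum_{i=1}^N B(a_i)
\]
over $B \in H^2 \cap H^1_0(\Omega)$. First I would note that $\Phi$ is a strictly convex, coercive quadratic form on $H^2 \cap H^1_0$ (coercivity follows, e.g., from \eqref{beta.est1}-type estimates, since $\|\Delta B\|_{L^2}$ controls $\|B\|_{H^2}$ for $B \in H^1_0$), and the linear functional $B \mapsto 2\pi \sum_i B(a_i)$ is bounded on $H^2 \hookrightarrow C^0(\bar\Omega)$ in two dimensions; hence $\beta$ exists and is unique, as already asserted. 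The main computation is to take the first variation: for any $\psi \in H^2 \cap H^1_0(\Omega)$,
\[
0 = \frac{d}{dt}\Big|_{t=0} J(\beta + t\psi) = \int_\Omega \nabla^\perp \beta \cdot \nabla^\perp \psi + (\Delta \beta + \hex)\Delta\psi \, dx - 2\pi \sum_{i=1}^N \psi(a_i).
\]
Using $\int \nabla^\perp\beta\cdot\nabla^\perp\psi = \int\nabla\beta\cdot\nabla\psi = -\int (\Delta\beta)\,\psi$ (valid since $\psi=0$ on $\partial\Omega$), this reads $\int_\Omega (\Delta^2\beta - \Delta\beta)\psi\,dx = 2\pi\sum_i \psi(a_i)$ in the distributional sense, which is exactly $\Delta^2\beta - \Delta\beta = 2\pi\sum_i \delta_{a_i}$ in $\Omega$; the boundary condition $\beta = 0$ on $\partial\Omega$ is built into the space $H^1_0$.

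The slightly more delicate point is the \emph{natural} boundary condition $-\Delta\beta = \hex$ on $\partial\Omega$. This comes from the boundary term that appears when one integrates by parts the $(\Delta\beta+\hex)\Delta\psi$ piece twice without assuming $\partial_\nu\psi = 0$ (the space $H^2\cap H^1_0$ only forces $\psi$, not $\partial_\nu\psi$, to vanish on $\partial\Omega$). Concretely, for smooth $\psi$ vanishing on $\partial\Omega$,
\[
\int_\Omega (\Delta\beta+\hex)\Delta\psi\,dx = \int_\Omega \Delta(\Delta\beta+\hex)\,\psi\,dx + \int_{\partial\Omega}\Big[(\Delta\beta+\hex)\,\partial_\nu\psi - \partial_\nu(\Delta\beta+\hex)\,\psi\Big]\,d\sigma.
\]
The last integral reduces to $\int_{\partial\Omega}(\Delta\beta+\hex)\,\partial_\nu\psi\,d\sigma$ because $\psi|_{\partial\Omega}=0$. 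Since $\partial_\nu\psi|_{\partial\Omega}$ is an arbitrary trace as $\psi$ ranges over $H^2\cap H^1_0$, stationarity forces $(\Delta\beta+\hex)|_{\partial\Omega}=0$, i.e.\ $-\Delta\beta = \hex$ on $\partial\Omega$. One subtlety: this boundary-term manipulation presupposes enough regularity of $\beta$ to make sense of the trace of $\Delta\beta$ on $\partial\Omega$; I would handle this by first deriving the weak Euler--Lagrange identity, then invoking elliptic regularity to bootstrap. Writing $\beta = \beta_1 + \beta_2$ where $-\Delta\beta_2 = \hex$ in $\Omega$, $\beta_2 = 0$ on $\partial\Omega$ (so $\beta_2$ is smooth), the remainder $\beta_1$ solves $\Delta^2\beta_1 - \Delta\beta_1 = 2\pi\sum_i\delta_{a_i} + (\text{smooth})$ with $\beta_1 = 0$ and $\Delta\beta_1 = 0$ on $\partial\Omega$; since $\sum_i\delta_{a_i} \in W^{-1,p}(\Omega)$ for every $p < 2$, standard elliptic estimates for this fourth-order problem give $\beta_1 \in W^{3,p}(\Omega)$ for all $p < 2$, hence $\beta \in W^{3,p}$ as claimed, and $\Delta\beta$ has a well-defined $W^{1,p}$ trace on $\partial\Omega$, justifying the above.

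\textbf{Expected main obstacle.} The routine part is the first variation and reading off the interior equation; the part needing care is making the natural boundary condition rigorous, i.e.\ arguing that the relevant boundary term $\int_{\partial\Omega}(\Delta\beta+\hex)\partial_\nu\psi\,d\sigma$ genuinely vanishes for all admissible variations and that $\Delta\beta+\hex$ has a trace for which this conclusion is meaningful. This is a standard feature of fourth-order variational problems with only the lower-order boundary condition imposed, so I would keep the exposition brief, referring to elliptic regularity for the fourth-order operator $\Delta^2 - \Delta$ (e.g.\ via the decomposition above and $L^p$ theory for the biharmonic-type operator) and to the trace theorem to legitimize the boundary integral.
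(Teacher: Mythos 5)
The paper omits this proof with the remark that it is standard, so there is no argument of record to compare against. Your first-variation computation is correct: the identity $\nabla^\perp\beta\cdot\nabla^\perp\psi=\nabla\beta\cdot\nabla\psi$, the interior PDE from testing against $C^\infty_c(\Omega)$, and the fact that $\delta_{a_i}\in W^{-1,p}$ for every $p<2$ in two dimensions are all right, and the decoupling of $\Delta^2-\Delta$ into two second-order problems under Navier conditions is the right way to get $W^{3,p}$.

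The one place you should tighten is the circularity you yourself flagged. As written, the decomposition $\beta=\beta_1+\beta_2$ with $\Delta\beta_1=0$ on $\partial\Omega$ already presupposes the natural boundary condition $-\Delta\beta=\hex$ on $\partial\Omega$, which is part of what is being proved; and invoking ``elliptic regularity for the fourth-order problem with Navier BC'' likewise assumes you already know which boundary conditions $\beta$ satisfies. A clean way to break the loop is to introduce $w:=-\Delta\beta-\hex\in L^2(\Omega)$ and rewrite the weak Euler--Lagrange identity
\[
\int_\Omega \nabla\beta\cdot\nabla\psi+(\Delta\beta+\hex)\Delta\psi\,dx = 2\pi\sum_{i=1}^N\psi(a_i)
\qquad\text{for all }\psi\in H^2\cap H^1_0(\Omega)
\]
as
\[
\int_\Omega w\,(-\Delta+1)\psi\,dx = 2\pi\sum_{i=1}^N\psi(a_i) - \hex\int_\Omega\psi\,dx .
\]
Taking $\psi=(-\Delta+1)^{-1}\chi\in H^2\cap H^1_0$ for $\chi\in C^\infty_c(\Omega)$ and using the definitions of $G$ and $\xi_0$ (so that $\psi(a_i)=\int G(a_i,\cdot)\chi$ and $\int\psi=-\int\xi_0\chi$) yields, after density,
\[
w = 2\pi\sum_{i=1}^N G(\cdot,a_i) + \hex\,\xi_0 \quad\text{a.e. in }\Omega .
\]
This identity gives everything at once without any bootstrapping: since $G(\cdot,a_i)$ and $\xi_0$ belong to $W^{1,p}_0(\Omega)$ for $p<2$, the trace of $w$ on $\partial\Omega$ vanishes, which is exactly $-\Delta\beta=\hex$ on $\partial\Omega$; and $w\in W^{1,p}$ together with $-\Delta\beta=w+\hex$ and $\beta\in H^1_0$ gives $\beta\in W^{3,p}$ for $p<2$ by second-order elliptic regularity. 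It also makes the later decomposition \eqref{rewrite1.1}, $\beta=-\hex\xi_0+B_1$ with $-\Delta B_1=2\pi\sum_i G(\cdot,a_i)$, immediate. So your proof is essentially sound; this reorganization just removes the apparent chicken-and-egg between the natural boundary condition and the regularity needed to justify it.
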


We omit the proof, which is standard.

We now define  $B_1:\Omega\to \R$ as the solution of the boundary value problem
\begin{align*}
\Delta^2 B_1 -\Delta B_1 &= 2 \pi \sum_{i=1}^N \delta_{a_i}\mbox{ in }\Omega,
 \qquad &B_1=0\mbox{ on }\partial \Omega,
 \qquad &-\Delta B_1=0\mbox{ on }\partial \Omega.
\end{align*}
With this notation we can state

\begin{lemma}
\[
\min_{B\in H^1_0\cap H^2}\Big[ \Phi(B) -2\pi \sum_i B(a_i)\Big]
= \hex^2 F(\xi_0)  + 2\pi \hex \sum_i \xi_0(a_i) - \pi \sum_i B_1(a_i).  \ \ \ \
\]
where $\xi_0$ is defined in \eqref{xi0.def}.
\label{lem.rewrite1}\end{lemma}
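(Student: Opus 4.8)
The plan is to decompose the minimizer $\beta$ into an explicit ``magnetic background'' piece and a ``vortex'' piece, exploiting linearity of the biharmonic-type operator $\Delta^2 - \Delta$ subject to the boundary conditions $B = 0$, $-\Delta B = \hex$ on $\partial\Omega$. Concretely, I would first identify a particular solution accounting for the inhomogeneous Neumann-type data $-\Delta B = \hex$ and zero right-hand side, and check that it is exactly $\hex$ times the solution of $(-\Delta+1)(-\Delta B) = 0$ in $\Omega$ with $-\Delta B = \hex/\hex = 1$... more precisely, set $B_0 := \hex \, \xi_0^{(2)}$ where $\xi_0^{(2)}$ solves $\Delta^2 B_0 - \Delta B_0 = 0$, $B_0 = 0$, $-\Delta B_0 = \hex$ on $\partial\Omega$; one checks directly that $-\Delta B_0 = \hex(\xi_0 + 1)$ where $\xi_0$ is as in \eqref{xi0.def}, because $(-\Delta + 1)(\hex(\xi_0+1)) = \hex((-\Delta+1)\xi_0 + 1) = \hex(-1+1) = 0$ in $\Omega$ and $\hex(\xi_0+1) = \hex$ on $\partial\Omega$. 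Then $\beta = B_0 + B_1$ by uniqueness, since $B_1$ carries exactly the vortex forcing $2\pi\sum\delta_{a_i}$ with homogeneous boundary data $B_1 = 0$, $-\Delta B_1 = 0$ on $\partial\Omega$, and the two boundary-value problems add up correctly.

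Next I would evaluate $\Phi(\beta) - 2\pi\sum_i \beta(a_i)$ using the first-order optimality/Euler--Lagrange identity for $\beta$: testing the weak formulation of $\Delta^2\beta - \Delta\beta = 2\pi\sum\delta_{a_i}$ against $\beta$ itself and integrating by parts (carefully tracking boundary terms coming from $-\Delta\beta = \hex$ on $\partial\Omega$) gives a relation of the shape $2\Phi(\beta) - (\text{boundary term}) = 2\pi\sum_i\beta(a_i) + (\text{constant depending only on }\hex)$, which lets one replace the quadratic-plus-linear functional at its minimum by a manageable combination. Equivalently, and more cleanly, I would substitute $\beta = B_0 + B_1$ directly into $\Phi(\cdot) - 2\pi\sum_i(\cdot)(a_i)$, expand the quadratic form, and use that $B_0$ and $B_1$ are ``orthogonal'' with respect to the bilinear form associated with $\Phi$ up to explicit boundary contributions — the cross terms collapse because $B_1$ solves a homogeneous problem while $B_0$'s forcing is zero in the interior. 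The pure-$B_0$ term should produce exactly $\hex^2 F(\xi_0) + 2\pi\hex\sum_i \xi_0(a_i)$: indeed $\Phi(B_0) = \frac12\int |\nabla^\perp B_0|^2 + (\Delta B_0 + \hex)^2 = \frac12\int |\nabla B_0|^2 + \hex^2(\xi_0+1+ \ldots)$... one rewrites $|\Delta B_0 + \hex|^2 = \hex^2|\xi_0 + 1 - 1|^2$? — here I need to be careful: $\Delta B_0 + \hex = -\hex(\xi_0+1) + \hex = -\hex\xi_0$, so $(\Delta B_0 + \hex)^2 = \hex^2\xi_0^2$, and combined with $\frac12\int|\nabla B_0|^2$ and an integration by parts this is precisely $\hex^2 F(\xi_0)$ (using that $\nabla B_0$ relates to $\hex\nabla$(something) — this is the step needing the identity $(-\Delta+1)(-\Delta B_0) = 0$ together with $B_0|_{\partial\Omega} = 0$). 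The linear term $-2\pi\sum_i B_0(a_i) = -2\pi\sum_i \hex \cdot(\text{value})$ must be reconciled with the claimed $+2\pi\hex\sum_i\xi_0(a_i)$, which forces $B_0(a_i)$ to equal $-\hex^{-1}\cdot$... so I expect $B_0$ and $\hex\xi_0$ to differ by a harmonic-type correction whose contribution is absorbed; I will need to check signs here with care.

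The remaining $B_1$-terms should give $-\pi\sum_i B_1(a_i)$: the quadratic form $\Phi$ restricted to the vortex piece satisfies (by testing $\Delta^2 B_1 - \Delta B_1 = 2\pi\sum\delta_{a_i}$ against $B_1$, with all boundary terms vanishing) the identity $\int |\Delta B_1|^2 + |\nabla B_1|^2 = 2\pi\sum_i B_1(a_i)$, so $\Phi$-part of the $B_1$ contribution is $\pi\sum_i B_1(a_i)$, and subtracting the linear term $-2\pi\sum_i B_1(a_i)$ yields net $-\pi\sum_i B_1(a_i)$, matching the statement. The cross terms between $B_0$ and $B_1$ in $\Phi$ — namely $\int \nabla^\perp B_0\cdot\nabla^\perp B_1 + (\Delta B_0 + \hex)(\Delta B_1)$ — together with the linear cross contributions need to be shown to vanish or to reorganize into the already-accounted pieces; integrating by parts turns this into $\int B_1(\Delta^2 B_0 - \Delta B_0) + \hex\int\Delta B_1 + (\text{bdry}) = 0 + \hex\int\Delta B_1 + (\text{bdry})$, and $\int_\Omega \Delta B_1 = \int_{\partial\Omega}\partial_\nu B_1$, which I would show cancels against the boundary term — this bookkeeping of boundary integrals is the main obstacle and the only place where the $C^1$ boundary regularity and the $W^{3,p}$ regularity of $\beta$ (hence of $B_1$, $B_0$) are genuinely used to justify the integrations by parts and the pointwise evaluation $B_1(a_i)$. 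Once all boundary terms are accounted for, collecting the three surviving contributions gives exactly $\hex^2 F(\xi_0) + 2\pi\hex\sum_i\xi_0(a_i) - \pi\sum_i B_1(a_i)$, completing the proof.
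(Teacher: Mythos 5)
Your decomposition is exactly the one the paper uses: the paper writes $\beta = -\hex\xi_0 + B_1$, which is the same as your $\beta = B_0 + B_1$, and the pure-$B_1$ and cross-term analyses proceed along the same lines. The $B_1$ piece of your argument is correct and matches the paper: testing $\Delta^2 B_1 - \Delta B_1 = 2\pi\sum\delta_{a_i}$ against $B_1$, with the boundary terms vanishing because $B_1 = \Delta B_1 = 0$ on $\partial\Omega$, gives $\frac12\int(|\nabla B_1|^2 + |\Delta B_1|^2) = \pi\sum B_1(a_i)$, so after subtracting $2\pi\sum B_1(a_i)$ one gets net $-\pi\sum B_1(a_i)$.

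However, there is a concrete gap where your proposal stalls unnecessarily. You correctly derive $-\Delta B_0 = \hex(\xi_0+1)$ in $\Omega$ with $B_0 = 0$ on $\partial\Omega$, but then treat the relationship between $B_0$ and $\hex\xi_0$ as something requiring a ``harmonic-type correction'' to be reconciled. In fact, since $\Delta\xi_0 = \xi_0 + 1$, you have $-\Delta(B_0 + \hex\xi_0) = 0$ in $\Omega$ with $B_0 + \hex\xi_0 = 0$ on $\partial\Omega$, so $B_0 = -\hex\xi_0$ \emph{exactly}, by uniqueness for the Dirichlet problem. Once this is noted, the linear term is immediate: $-2\pi\sum_i B_0(a_i) = 2\pi\hex\sum_i\xi_0(a_i)$, with no correction to absorb. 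Your failure to close this loop is what makes the rest of the bookkeeping seem murkier than it is. Relatedly, the cross-term cancellation is cleaner if you use this identification from the start: since $\Delta\beta + \hex = -\hex\xi_0 - \Delta B_1 = \hex\xi_0 + w_1$ with $w_1 := -\Delta B_1$ (here I write $\Delta\beta + \hex = -\hex\xi_0 + \Delta B_1$, so its square is $(\hex\xi_0+w_1)^2$), the cross contribution in $\Phi(\beta)$ reduces to $\hex\int(-\nabla\xi_0\cdot\nabla B_1 + \xi_0 w_1)$, which vanishes by a single integration by parts using only $\xi_0\in H^1_0(\Omega)$ and $w_1 = -\Delta B_1$, with no boundary integrals over $\partial\Omega$ to track. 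Your route via $\int B_1(\Delta^2 B_0 - \Delta B_0)$ plus boundary contributions can be made to work, but it introduces boundary bookkeeping (involving $\Delta B_0 = -\hex$ on $\partial\Omega$ and $\int_{\partial\Omega}\partial_\nu B_1$) that the paper's computation sidesteps entirely. In short: right decomposition and right structure, but you left the key elementary identity $B_0 = -\hex\xi_0$ on the table, which is what closes the argument.
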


\begin{proof}
By differentiating the equation satisfied by $\xi_0$, one finds that
\begin{align*}
\Delta^2 \xi_0 -\Delta \xi_0 &= 0 \mbox{ in }\Omega,
 \qquad &\xi_0=0\mbox{ on }\partial \Omega,
\quad  &-\Delta \xi_0=-1\mbox{ on }\partial \Omega.
\end{align*}
It follows that
\begin{equation}
\beta= -\hex\xi_0+B_1.
\label{rewrite1.1}\end{equation}
Defining $w_1 = -\Delta B_1$, 
it follows that $\Delta \beta = - \hex (\xi_0+1)- w_1$,  
and hence that 
\begin{equation}
(\hex +\Delta \beta)^2 = (\hex \xi_0 + w_1)^2.
\label{rewrite1.2}\end{equation}
Furthermore,
\begin{equation}
-\Delta(w_1+B_1) = (-\Delta+1) w_1 = (\Delta^2-\Delta)B_1 = 2\pi\sum_i \delta_{a_i}.
\label{rewrite1.3}\end{equation}
Using \eqref{rewrite1.1} and \eqref{rewrite1.2}, we  rewrite
\begin{align*}
\Phi(\beta)
&=
\frac 12\int |\nabla \beta|^2 + (\Delta \beta + \hex)^2\\
&=
\frac 12 \int  |\nabla( \hex \xi_0 - B_1)|^2 + (\hex \xi_0 + w_1)^2\\
&=
\hex^2\left(\frac 12 \int |\nabla \xi_0|^2 + \xi_0^2 \right)
+ \hex \left( 
\int -\nabla \xi_0 \cdot \nabla B_1  + \xi_0 w_1 \right)
+ \left(\frac 12 \int |\nabla B_1|^2 + w_1^2\right).
\end{align*} 

For the second term on the right-hand side, note that
\begin{align*}
\int \nabla \xi_0 \cdot \nabla B_1  - \xi_0 w_1 
&=0, \qquad
\mbox{ since $\xi_0\in H^1_0$ and $-\Delta B_1 = w_1$.}
\end{align*}
For the final term on the right-hand side, since $B_1=w_1=0$ on $\partial \Omega$ and $w_1=-\Delta B_1$, we integrate by parts and use the equations (in particular \eqref{rewrite1.3}) to find that
\begin{align*}
\frac 12 \int |\nabla B_1|^2 + w_1^2
&=
\frac 12 \int  - B_1 \Delta B_1 - w_1 \Delta B_1\\
&=
-\frac 12 \int  (B_1 + w_1)\Delta B_1\\
&=
\pi \sum B_1(a_i).
\end{align*}
The conclusion now follows by using these facts and \eqref{rewrite1.1}
to rewrite $\Phi(\beta) -2\pi \sum_i \beta(a_i)$.
\end{proof}

Finally, we complete the

\begin{proof}[Proof of Lemma \ref{L.reduction}]

In view of Lemma \ref{L.tildered}, we must show that
\[
\min_{A\mbox{ \scriptsize satisfying \eqref{coulomb}}}
 \Sigma_\ep^{GL}(u,A,a)
=
\widetilde \Sigma_\ep^{GL}(u,a).
\]
By comparing the definitions,
we see that this is the same as
\be\label{WHrelation}
H_\ep^N(a) + \hex^2 F(\xi_0) = 
\min_{B\in H^1_0\cap H^2}\Big[ \Phi(B) -2\pi \sum_i B(a_i)\Big]
+ W^N(a).
\ee
Using the previous lemma and various definitions,
see \eqref{S.def}, \eqref{Hep.newdef}, \eqref{W.def0}, 
this reduces to checking that
\[
\pi \sum_{i,j=1}^N S(a_i,a_j) = 
\pi \sum_{i,j=1}^N R(a_i,a_j)  - \pi \sum_{i=1}^N B(a_i)  .
\]
So we only need to prove that
\[
B_1(x)  \ = \  \sum_j  R(x,a_j) - S(x,a_j)
\]
for all $x$.
To do this, we use the equations for $G$ and $\Gamma$ to compute
\[
-\Delta \Big(  \sum_j  R(x,a_j) - S(x,a_j)\Big)
= - \Delta_x \Big( 2\pi \sum_j \Gamma(x,a_j) - G(x,a_j) \Big)
= 2\pi \sum_j G(x, a_j) . 
\]
Thus, applying $(-\Delta+1)$ to both sides, we 
find that $\sum_j R(\cdot, a_j) - S(\cdot, a_j)$ satisfies the
boundary-value problem that characterizes $B_1$.
Thus we have completed the proof.
\end{proof}


\section{Energy-minimizers in $\calA_\ep^N$}\label{EM.Localization:UB, LB}

The proposition below completes our argument;
once it is known, the proof of Theorem \ref{T.main} follows by
{\em exactly} the argument given in Section \ref{sec:PMT}.

Recall that  $\tep$ is a constant that appears in the definition of $\calA_\ep^N$ and
was fixed in the proof of Lemma \ref{L.uMepNstar}.

\begin{prop}\label{interior}
Assume that $(u_\ep,A_\ep)$ minimizes $GL_\ep$ in $\calA_\ep^N$
and that $A_\ep$ satisfies the
Coulomb gauge condition \eqref{coulomb}.
Then
\be\label{GLupper}
GL_\ep (u_\ep,A_\ep) \le \min_{M_{\ep,N}}H_\ep^N +\kappa_\ep^{GL}  +\frac \tep3 ,
\ee
and in addition,  there exists $\xi \in M_{\ep,N}$ such that
\be
\left\| Ju_\ep - \pi\sum_{i=1}^N \delta_{\xi_i} \right\|_{\dot W^{-1,1}}\le 
\frac 12 \sigma_\ep
\label{xi.vnear}\ee
and 
\be
GL_\ep(u_\ep,A_\ep) \ge  H_\ep^N(\xi) +\kappa_\ep^{GL}   - \frac \tep3 .
\label{Hxi.upper}\ee
\end{prop}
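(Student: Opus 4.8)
The proposition has three parts: the energy upper bound \eqref{GLupper}, the improved localization \eqref{xi.vnear}, and the energy lower bound \eqref{Hxi.upper}. These correspond exactly to the three ingredients assembled in Section~\ref{sec:PMT}, so the task is to verify that the hypotheses of Propositions~\ref{P.gstab} and \ref{P.localization} are met under the scaling assumptions \eqref{scaling1}, \eqref{scaling2}, and to make the final choice of $\sigma_\ep$. I would organize the proof in four steps.

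\textit{Step 1: the upper bound \eqref{GLupper} and an energy bound for $u_\ep$.} Fix a minimizer $\tilde a \in M_{\ep,N}$ of $H_\ep^N$ over $M_{\ep,N}$ (which exists since $M_{\ep,N}$ is compact and $H_\ep^N$ is lower semicontinuous there), and note that Proposition~\ref{main.prop} applies to $\tilde a$, giving $\rho_{\tilde a} \ge c\,\hex^{-1/2}$. Then construct an explicit competitor $(u,A)$ with vortices at $\tilde a$: take $u$ of the standard form $\prod_i \frac{x-a_i}{|x-a_i|}\rho(x)$ away from small disks around the $a_i$, with $|u|$ interpolating to $0$ inside disks of radius $\sim\ep$, and take $A = \nabla^\perp B$ with $B$ the minimizer of $\Phi(B) - 2\pi\sum_i B(a_i)$ from Lemma~\ref{lem.rewrite1}. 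A direct computation — this is the construction underlying \eqref{pmt2}, and is where the ``$+C|\log\ep|$'' accuracy of earlier works enters — shows $GL_\ep(u,A) \le H_\ep^N(\tilde a) + \kappa_\ep^{GL} + o(1)$, with the error controlled by $\varpi$ and powers of $\ep, \hex$ that are $\le \tep/3$ under \eqref{scaling1}, \eqref{scaling2}. This also needs $\|Ju - \pi\sum\delta_{a_i}\|_{\dot W^{-1,1}} \le \sigma_\ep$ so that $(u,A) \in \calA_\ep^N$; this is immediate since the vorticity is supported in disks of radius $\sim\ep \ll \sigma_\ep$. Minimality of $(u_\ep,A_\ep)$ then gives \eqref{GLupper}. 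From \eqref{GLupper} and $H_\ep^N(\tilde a)\ge -C\hex^2$ (using \eqref{Hep.newdef}, $\xi_0 \le 0$, $S$ bounded below on the relevant region, $N \le C\hex$), together with $\kappa_\ep^{GL} \le C\hex^2 + CN|\log\ep| \le C\hex^2$, one gets $GL_\ep(u_\ep,A_\ep) \le C\hex^2$, hence $E_\ep(u_\ep) \le C\hex^2$ after discarding the nonnegative magnetic terms (or rather after noting $E_\ep(u) \le GL_\ep(u,A) + C\hex^2$ via the same algebraic split \eqref{G.split}).

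\textit{Step 2: apply the lower bound and localization.} By definition of $\calA_\ep^N$ there is $a_\ep \in M_{\ep,N}^*$ with \eqref{wherearethey}, i.e. \eqref{close} with this $\sigma_\ep$; Proposition~\ref{main.prop} gives $\rho_{a_\ep} \ge c_1 \hex^{-1/2}$. Combining \eqref{GLupper} with $H_\ep^N(a_\ep) \ge \min_{M_{\ep,N}} H_\ep^N$ yields $\Sigma_\ep^{GL}(u_\ep,A_\ep,a_\ep) \le \tep/3$. Now check the hypotheses of Proposition~\ref{P.localization}: \eqref{loc.h1} is $\sigma_\ep \lesssim \rho_{a_\ep}/N^5 \sim \hex^{-1/2}/N^5$, and \eqref{Ebds} asks $\frac{N^5}{\rho_{a_\ep}}E_\ep(u_\ep) + \frac{N^{10}}{\rho_{a_\ep}^2}\sqrt{E_\ep(u_\ep)} \le 1/\ep$, i.e. (using $E_\ep(u_\ep) \le C\hex^2$, $\rho_{a_\ep}\sim\hex^{-1/2}$) roughly $N^5\hex^{5/2} + N^{10}\hex^2 \lesssim 1/\ep$; these are precisely the constraints encoded in \eqref{scaling2} (the $k_2\ep^{-1/10}\hex^{-1/5}$ bound on $N$ is tailored to make the $N^{10}\hex^2$ term work, and $\hex \le k_1\ep^{-1/4}$ handles the $\hex^{5/2}$ term). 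Proposition~\ref{P.localization} then produces $\xi \in \Omega^{N*}$ with $|\xi_i - a_{\ep,i}| \le \rho_{a_\ep}/(2C_2N^4)$ and the bound \eqref{p2.c1} on $\|Ju_\ep - \pi\sum\delta_{\xi_i}\|_{\dot W^{-1,1}}$. The right-hand side of \eqref{p2.c1} involves $\widetilde\Sigma_\ep^{GL}$, which by \eqref{GLandE0} is $\le \Sigma_\ep^{GL}(u_\ep,A_\ep,a_\ep) + \varpi(u_\ep) \le \tep/3 + \varpi \le \tep$ (with $\varpi$ small by Step~1 and \eqref{scaling1}, \eqref{scaling2}); so the exponential $e^{\widetilde\Sigma_\ep^{GL}/\pi}$ is bounded, and the whole right-hand side is $\lesssim \ep\big(N + N^5\hex^{1/2} + \hex^2\big)$, which under \eqref{scaling2} is $\ll \ep^{1/2} \lesssim \sigma_\ep$ (this is what forces $\sigma_\ep \gtrsim \ep^{99/100}$ and $\lesssim \ep^{49/100}$ and fixes the precise \eqref{sigmaep.def}). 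Choosing $\sigma_\ep$ accordingly gives \eqref{xi.vnear}. Also note $\xi \in M_{\ep,N}$: since $a_\ep \in M_{\ep,N}$ means $\dist(a_{\ep,i},\partial\Omega)\ge\hex^{-1/3}$, and $|\xi_i-a_{\ep,i}|\le \rho_{a_\ep}/(2C_2N^4) \le \frac14\hex^{-1/3}$, we keep $\dist(\xi_i,\partial\Omega)\ge\hex^{-1/3}$ — wait, more carefully: Proposition~\ref{main.prop} actually gives $\dist(a_{\ep,i},\partial\Omega)\ge c_0\hex^{-1/4}$, so the perturbation, which is $O(\hex^{-1/2}/N^4)$, certainly keeps us at distance $\ge \hex^{-1/3}$, hence $\xi\in M_{\ep,N}$; moreover $\rho_\xi \ge \frac12 \rho_{a_\ep} \ge c\hex^{-1/2}$.

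\textit{Step 3: the lower bound \eqref{Hxi.upper}.} Apply Proposition~\ref{P.gstab} to $(u_\ep, A_\ep)$ with the configuration $\xi$ in place of $a$, using \eqref{xi.vnear} (which is \eqref{close} with $\frac12\sigma_\ep$ in the role of $\sigma_\ep$) and $\rho_\xi \ge c\hex^{-1/2}$. The hypotheses \eqref{gstab.h1} are a chain of inequalities among $\ep$, $\sigma_\ep$, $N$, $\rho_\xi$, $E_\ep(u_\ep)$; each is verified from \eqref{scaling1}, \eqref{scaling2}, the bound $E_\ep(u_\ep)\le C\hex^2$, and the range $\ep^{99/100}\lesssim\sigma_\ep\lesssim\ep^{49/100}$ chosen in Step~2 — in particular $4\ep\sqrt{\ln(\rho_\xi/\ep)} \le 4\sigma_\ep$ holds since $\sigma_\ep \gg \ep\sqrt{|\log\ep|}$, and the remaining middle and right inequalities reduce to polynomial bounds on $N,\hex$ already used above. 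The conclusion \eqref{gstab.c1} reads $GL_\ep(u_\ep,A_\ep) \ge H_\ep^N(\xi) + \kappa_\ep^{GL} - C[N^5\rho_\xi^{-1}(\sigma_\ep + \ep E_\ep(u_\ep))]^{1/2} - \varpi(u_\ep)$. The error term $[N^5\hex^{1/2}(\sigma_\ep + \ep\hex^2)]^{1/2}$ is, under \eqref{scaling2} and the smallness of $\sigma_\ep$, bounded by $\tep/3$ for $\ep$ small; likewise $\varpi(u_\ep) = C(\ep\hex^4 + \ep\hex^4 + \sigma_\ep^{7/8}\hex^{5/4}) \le \tep/3$. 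This gives \eqref{Hxi.upper}.

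\textit{Main obstacle.} The substantive work is the bookkeeping in Steps~1–3: verifying that the elaborate error terms in \eqref{gstab.h1}, \eqref{gstab.c1}, \eqref{Ebds}, \eqref{p2.c1}, and $\varpi$ are all genuinely controlled by the two-sided scaling window \eqref{scaling1}–\eqref{scaling2} and the narrow band $\ep^{99/100}\lesssim\sigma_\ep\lesssim\ep^{49/100}$ — and then choosing $\sigma_\ep$ (formula \eqref{sigmaep.def}) so that \eqref{p2.c1} lands below $\frac12\sigma_\ep$ while $\sigma_\ep$ itself stays below the thresholds \eqref{loc.h1} and those in \eqref{gstab.h1}. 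The constructive upper bound in Step~1, i.e. making \eqref{pmt2}/\eqref{GLupper} quantitative with $o(1)$ (indeed $\le\tep/3$) error, is the only place requiring a genuine construction rather than invocation of a prior result, but it is the classical vortex-profile computation and poses no conceptual difficulty here. The key point is that the exponents in \eqref{scaling1}, \eqref{scaling2} were reverse-engineered precisely so that every one of these inequalities closes; the proof is the verification that this is indeed so.
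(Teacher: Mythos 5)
Your proposal follows essentially the same route as the paper: construct a competitor to get the upper bound, use it to bound $E_\ep(u_\ep)$, apply the localization proposition to produce $\xi$, and then apply the lower-bound proposition at $\xi$. Two remarks on where you are less careful than the paper.

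First, the bound $E_\ep(u_\ep)\lesssim\hex^2$ is \emph{not} a one-line consequence of the algebraic split \eqref{G.split}, and ``discarding the nonnegative magnetic terms'' certainly does not give it: the cross term $-\int j(u)\cdot A$ in \eqref{G.split} has no sign. The paper isolates this as a separate lemma (Lemma~\ref{L.Eep}), whose proof uses the rewritten identity \eqref{GsplitB}, the lower bound $\Phi(B)-2\pi\sum B(a_i)\ge -C\hex^2$ from \eqref{Phi-sumB}, and crucially the vorticity-closeness hypothesis \eqref{close} to control $\int_\Omega B\,(Ju-\pi\sum\delta_{a_i})$; the resulting estimate \eqref{Leap} carries extra error terms depending on $\sigma_\ep$, $\ep\,GL_\ep^2$, etc.\ that must themselves be shown small. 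Your sketch skips this entirely, and as written it contains a false claim.

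Second, for Step~1 the paper does not build the competitor from scratch but imports $u_a$ from Lemma~14 of \cite{JSp2} (which already carries the $\Sigma_\ep^{BBH}\lesssim N\ep^2\rho_a^{-2}$ and Jacobian estimates), then takes $A_a$ minimizing $GL_\ep(u_a,\cdot)$ and uses Lemma~\ref{L.reduction} to pass to $\Sigma_\ep^{GL}$. Your ``by hand'' construction would have to reproduce this estimate, which is the content of that cited lemma, so you should cite it rather than redo the vortex-profile computation. On the other hand, your argument that $\xi\in M_{\ep,N}$ directly from the displacement bound $|\xi_i-a_{\ep,i}|\le\rho_{a_\ep}/(2C_2N^4)\le\tfrac14\dist(a_{\ep,i},\partial\Omega)$ is correct and in fact simpler than the paper's test-function argument via \eqref{xineara}; both are valid. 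The remaining bookkeeping (choice of $\sigma_\ep$, verification of the hypotheses \eqref{gstab.h1}, \eqref{loc.h1}, \eqref{Ebds} under \eqref{scaling1}--\eqref{scaling2}) matches what the paper does in its ``Step 0.''
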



Before we can use results from the previous section
effectively, we need to control $E_\ep(u_\ep)$, which appears in many error terms.
This is the point of our first lemma.

\begin{lemma}\label{L.Eep}
Assume that $(u,A)$
satisfies \eqref{close} for some  $\sigma_\ep>0$ and $a\in \Omega^N$,
with $N\le \hex|\Omega|/2\pi$.
Then
\be\label{Leap}
E_\ep(u) \le GL_\ep(u,A) + C \hex^2 +C\ep\hex^4+ C \ep\, GL_\ep(u,A)^2 + 
C \sigma_\ep
(GL_\ep(u,A)+\hex^2)^{3/2} . 
\ee
for constants depending only on $\Omega$.\end{lemma}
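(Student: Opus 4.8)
\textbf{Proof proposal for Lemma \ref{L.Eep}.}

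The plan is to estimate $E_\ep(u)$ by comparing it with $GL_\ep(u,A)$ through the algebraic identity \eqref{G.split}, which expresses $GL_\ep(u,A)$ as $E_\ep(u)$ minus $\int_\Omega j(u)\cdot A$ plus manifestly nonnegative terms plus the remainder $R(u,A)$ of \eqref{R.def}. Rearranging this identity, one obtains
\[
E_\ep(u) \le GL_\ep(u,A) + \int_\Omega j(u)\cdot A - R(u,A),
\]
after discarding $\frac 12\int_\Omega |A|^2 + |\nabla\times A - \hex|^2 \ge 0$. So everything reduces to bounding the cross term $\int_\Omega j(u)\cdot A$ and the remainder $R(u,A)$ in terms of $GL_\ep(u,A)$, $\hex$, $\sigma_\ep$, and $\ep$. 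The key observation, as in the proof of Lemma \ref{L.tildered}, is that after passing to $B = (\nabla^\perp)^{-1}A$ and integrating by parts the cross term becomes $2\int_\Omega B\,Ju$, which we split as $2\int_\Omega B(Ju - \pi\sum \delta_{a_i}) + 2\pi\sum_i B(a_i)$; the first piece is controlled by \eqref{close} together with the Sobolev embedding $\dot W^{1,m}_0 \hookrightarrow \dot W^{-1,m/(m-1)}$-duality exactly as in \eqref{enough}--\eqref{Jdiff}, and the second piece is controlled pointwise using $H^2 \hookrightarrow L^\infty$ and $N \le \hex|\Omega|/2\pi$.

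The steps, in order, would be: (i) Obtain an a priori $H^2$ bound on $B$. Since $B = 0$ on $\partial\Omega$, elliptic estimates give $\|B\|_{H^2} \le C\|\Delta B\|_{L^2} \le C(\|\Delta B + \hex\|_{L^2} + \hex)$; and $\|\Delta B + \hex\|_{L^2}^2 \le 2\Phi(B) \le 2GL_\ep(u,A) + $ (lower-order), more directly $\int_\Omega |\Delta B + \hex|^2 \le 2GL_\ep(u,A)$ after discarding other nonnegative terms in the definition of $GL_\ep$. Hence $\|B\|_{H^2} \le C(\sqrt{GL_\ep(u,A)} + \hex)$. (ii) Bound $|R(u,A)| = \frac 12|\int_\Omega(|u|^2-1)|A|^2| \le \int_\Omega \frac{(|u|^2-1)^2}{\ep} + \ep|A|^4 \le C\ep(E_\ep(u)^2 + \hex^4)$ using $\|A\|_{L^4}^4 \le C\|B\|_{H^2}^4 \le C(GL_\ep(u,A)^2 + \hex^4)$ and $\int_\Omega(|u|^2-1)^2/\ep \le C\ep E_\ep(u)^2$; but since $E_\ep(u)$ is what we are estimating, we instead bound this term by $C\ep(GL_\ep(u,A)^2 + \hex^4)$ — here one must check that using a weaker but adequate bound on $\int_\Omega(|u|^2-1)^2$ works, namely $\int_\Omega(|u|^2-1)^2 \le C\ep^2 E_\ep(u) \le C\ep^2 GL_\ep(u,A) + \ldots$, which can be arranged by a bootstrap or by simply noting $(|u|^2-1)^2/(4\ep^2) \le E_\ep(u)$ pointwise-integrated gives $\int(|u|^2-1)^2 \le 4\ep^2 E_\ep(u)$, and then the product with $\ep|A|^4$ is a product of two small quantities. (iii) Bound $|2\int_\Omega B(Ju - \pi\sum\delta_{a_i})| \le C\sigma_\ep^{1-2/m}(E_\ep(u)+\hex^2)^{1/2+2/m}\|B\|_{H^2}$ as in \eqref{Jdiff}, choosing $m$ large (or, following the pattern of Lemma \ref{L.tildered}, whatever exponent yields the stated $3/2$ power), and absorb $\|B\|_{H^2} \le C(\sqrt{GL_\ep(u,A)}+\hex)$. (iv) Bound $|2\pi\sum_i B(a_i)| \le CN\|B\|_{L^\infty} \le CN\|B\|_{H^2} \le C\hex(\sqrt{GL_\ep(u,A)}+\hex) \le C(GL_\ep(u,A) + \hex^2)$ using Young's inequality and $N \le C\hex$.

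The main obstacle is the circularity in step (ii): $E_\ep(u)$ appears on the right-hand side through $R(u,A)$ and through the $L^1$-norm bound $\|Ju\|_{L^1} \le CE_\ep(u)$ implicit in \eqref{Jdiff}. The resolution is a standard absorption/bootstrap: collect all occurrences of $E_\ep(u)$ on the right, observe each comes multiplied either by $\ep$ (from $R$) or by $\sigma_\ep$ (from the Jacobian defect term), both of which are small, so that a term like $C\ep E_\ep(u)^2$ or $C\sigma_\ep E_\ep(u)^{1/2+2/m}$ can be absorbed into the left-hand side provided $E_\ep(u)$ is not already enormous — and one handles the genuinely large case separately, or more cleanly one first derives a crude bound $E_\ep(u) \le GL_\ep(u,A) + C\hex^2 + (\text{small})\cdot E_\ep(u) + \ldots$ with a coefficient on $E_\ep(u)$ strictly less than $1$ (achievable since the dangerous coefficients are $O(\ep)$ and $O(\sigma_\ep)$ times powers of the already-present quantities, and one can split $\ep E_\ep(u)^2 \le \frac 12 E_\ep(u) + C\ep^2 E_\ep(u)^3$... this still has $E_\ep$), so more honestly: first prove the crude a priori bound $E_\ep(u) \le 2GL_\ep(u,A) + C\hex^2$ by noting $GL_\ep(u,0) = E_\ep(u) + \frac 12|\Omega|\hex^2 \ge GL_\ep(u,A_\ep^{min}(u))$ is false in general — rather, use that $GL_\ep(u,A) = E_\ep(u) + (\text{nonneg}) + (\text{cross}) + R$, and the cross term and $R$ are each $o(1)$ relative to $E_\ep(u) + \hex^2$ once a preliminary polynomial bound $E_\ep(u) \lesssim GL_\ep(u,A) + \hex^2$ (valid because the negative contributions are lower order) is in hand. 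Once $E_\ep(u)$ is known to be $\lesssim GL_\ep(u,A) + \hex^2$ up to constants, substitute this crude bound back into the error terms from steps (ii)--(iii) to replace every $E_\ep(u)$ by $GL_\ep(u,A) + \hex^2$, yielding the sharp estimate \eqref{Leap}.
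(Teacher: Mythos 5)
Your plan is essentially the paper's: the algebraic decomposition \eqref{GsplitB}, the $H^2$ bound on $B=(\nabla^\perp)^{-1}A$ from \eqref{B.last}, the split of $2\int B\,Ju$ into the Jacobian defect and $2\pi\sum B(a_i)$, the duality/interpolation estimate for the defect, and absorption of $E_\ep(u)$ on the right. Two comments. First, in step (iv) you discard $\Phi(B)\geq 0$ and then estimate $2\pi\sum B(a_i)$ by $CN\|B\|_{L^\infty}\lesssim \hex\|B\|_{H^2}$; the paper instead keeps $\Phi(B)$ in the identity and uses \eqref{Phi-sumB}, $\Phi(B)-2\pi\sum B(a_i)\geq -C\hex^2$, obtained by completing the square between $\Phi(B)$ and $N\sqrt{\Phi(B)}$. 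Retaining $\Phi(B)$ is the tighter move: the point evaluations are paid for out of $\Phi(B)$ and contribute only $C\hex^2$, whereas your route produces an additional $C\hex\sqrt{GL_\ep(u,A)}$ which, after any Young split, leaves a stray multiple of $GL_\ep(u,A)$ on top of the $GL_\ep(u,A)$ coming out of the identity --- adequate for the eventual use in Proposition \ref{interior}, but not matching the form of \eqref{Leap}. Second, your final paragraph about circularity and a possible bootstrap is far more complicated than necessary. The $E_\ep(u)$ appearances on the right-hand side are exactly two: (a) $\int(|u|^2-1)^2/\ep\leq 4\ep\,E_\ep(u)$ inside $R(u,A)$, with the harmless coefficient $4\ep$; and (b) the sublinear factor $(E_\ep(u)+\pi N)^{2/m}$ from \eqref{Jdiff}, which a weighted Young inequality (the paper takes $m=6$) converts into $\tfrac13 E_\ep(u)+C\hex+C\sigma_\ep(GL_\ep+\hex^2)^{3/2}$. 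Both absorb into the left-hand side in one pass; no preliminary crude bound, case analysis, or substitution step is required.
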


\begin{proof} Recall from \eqref{GsplitB} the  identity
\[
E_\ep(u) = GL_\ep(u,\nabla^\perp B) - \left( \Phi(B) - 2\pi \sum_{i=1}^N B(a_i)
\right)
+ R(u,\nabla^\perp B) +  2\int_\Omega B\left(Ju  - \pi \sum \delta_{a_i} \right),
\]
where $R(u,A):= \frac 12 \int_\Omega (|u|^2-1)|A|^2$.
Arguing as in \eqref{BH2}, \eqref{need.soon}, we see that
$B = (\nabla^\perp)^{-1}A$ satisfies
\be\label{B.last}
\| B\|_{H^2}^2 \le  C (GL_\ep(u, A) + \hex^2).
\ee
As in \eqref{R.est}, we deduce that
\[
|R(u,A)|\le C\, \ep \, (GL_\ep(u,A)^2 + \hex^4).
\]
Next, by combining \eqref{enough} and \eqref{Jdiff} with \eqref{B.last} and a Sobolev embedding, we find that for any $m>2$,
\[
\left|\int_\Omega B\left(Ju  - \pi \sum \delta_{a_i} \right)\right|\le
C\sigma_\ep^{1-\frac 2m}(GL_\ep(u, A) + \hex^2)(E_\ep(u) + \pi N)^{\frac 2m} 
\]
Taking $m=6$ and using elementary inequalities, we deduce that the right-hand side is
bounded by
\[
\frac 13 E_\ep(u) + C\hex + C \sigma_\ep
(GL_\ep(u,A)+\hex^2)^{3/2}. 
\]
The lemma follows by combining these estimates with \eqref{Phi-sumB}.
 \end{proof}

\begin{proof}[Proof of Proposition \ref{interior}]

{\em Step 0.}
Our eventual aim  is to use Propositions \ref{P.gstab} and \ref{P.localization}
from the previous section, 
adjusting the parameters in our scaling assumptions both to 
arrange that the hypotheses are satisfied and to control the
error terms. We will go through
the choice of parameters rather carefully, to make it clear that 
there is nothing circular in our argument.
We recall the assumptions:
\[
0<\ep< \ep_0, 
\qquad\qquad
K_1\le  \hex \le k_1 \ep^{-1/4},
\]
\[
1\le N \le  \min
\left\{ \frac{\hex}{2\pi}(|\Omega|-\hex^{-1/4}) , k_2\ep^{-1/10}\hex^{-1/5} \right\}.
\]
For the definition \eqref{calA.def} of $\calA_\ep^N$,  we will
choose
\be\label{sigmaep.def}
\sigma_\ep = \ep^{99/100} \, \max \{ N^5 \hex^{1/2}, \hex^2\}
\ee
If $a\in M_{\ep,N}$ then it follows from Proposition \ref{main.prop}
that
\[
\rho_a \ge c_1\hex^{-1/2}.
\]
Finally, we will only apply Propositions \ref{P.gstab} and
\ref{P.localization} to $(u,A)$ such that
\be\label{C3}
E_\ep(u) \le C_3\hex^2
\ee
for $C_3(\Omega)$ to be determined below.
(In fact we will take $C_3 = \max\{ C_5, C_6\}$, where these
constants are identified below.)
We have already imposed conditions on $K_1$ for example.
We now adjust $\ep_0, k_1, k_2$ as follows.

First, by decreasing $\ep_0$ and $k_1$ as necessary,
\be\label{K2}
\begin{aligned}
(\mbox{right-hand side of \eqref{p2.c1}}) &\le \frac 12 \sigma_\ep \quad&\mbox{ if }\widetilde \Sigma^{GL}_\ep \le \tep\mbox{ and $u$ satisfies \eqref{C3}}, \\
\varpi(u) &\le \frac \tep6\qquad \quad &\mbox{ if $u$ satisfies \eqref{C3}}.
\end{aligned}
\ee
Similarly, by decreasing $k_2$ we may assume that
\be
\mbox{ hypotheses \eqref{loc.h1}, \eqref{Ebds} of Proposition \ref{P.localization} hold}, \quad\mbox{ if
$u$ satisfies \eqref{C3} . }
\ee
Hypothesis \eqref{gstab.h1} of Proposition \ref{P.gstab} involves both an upper and lower bound on
$\sigma_\ep$. The lower bound $\sigma_\ep \ge \ep\sqrt{\log (\rho_0/\ep)}$ 
follows from the form of $\sigma_\ep$, after possibly adjusting $\ep_0$, and the upper bound
is less stringent than \eqref{loc.h1}, already satisfied.
Finally, we claim that by further decreasing $\ep_0$,
we can arrange that $N^5\rho_a^{-1}(\sigma_\ep+\ep E_\ep(u))$ is as
small as we like, and hence, in view of \eqref{K2}, that 
\be\label{e0last}
(\mbox{right-hand side of \eqref{gstab.c1}}) \ge H_\ep^N(a) +\kappa_\ep^{GL} - \frac \tep3
\qquad \mbox{ if  $u$ satisfies \eqref{C3}}.
\ee
To see this, note that
\begin{align*}
N^5\rho_a^{-1}\sigma_\ep \le
C N^5 \hex^{1/2}\sigma
&\le
C \ep^{99/100}\max\{ N^{10}h, N^5 h^{5/2} \} 
\end{align*}
Using $N \le k_2 (\ep\hex^2)^{-1/10}$ for large $\hex$ and $N \le C \hex$ for small $\hex$,
we deduce that
\[
N^5\rho_a^{-1}\sigma_\ep \le \begin{cases}
 C(k_2) \max \{ \ep^{-1/100}h^{-1}, \ep^{49/100}h^{3/2}\} &\mbox{ if }\ep^{-1/12}\le \hex \le \ep^{-1/4}\\
C \ep^{99/100} \hex^{11}&\mbox{ if }\hex\le \ep^{-1/12}
\end{cases}
\]
which can be made as small as we like by a suitable choice of $\ep_0$. 
Similar considerations show that the same holds for $N^5\rho^{-1}\ep E_\ep(u)$,
subject to \eqref{C3}. Thus we may achieve \eqref{e0last}.

{\em Step 1}. 
We now prove \eqref{GLupper}.
We start by noting that for every $N\ge 1$ satisfying \eqref{scaling2},
\be\label{Hneg}
\min_{a\in M_{\ep,N}}H_\ep^N(a) = \min_{a\in \Omega^N} \uH_\ep^N \le 0.
\ee
The  equality on the left is clear from 
Lemma  \ref{L.uMepNstar}, which, for this range of $N$, 
implies that  $\min_{\Omega^N} \uH_\ep^N$ is attained in $M_{\ep, N}$,
in which $\uH_\ep^N = H_\ep^N$.
The inequality follows by an easy induction argument, which relies on the
fact that $\xi_0$ and  $v_\ep$ vanish on $\partial \Omega$,
as does $G(\cdot, a)$, for every $a\in \Omega$.

Let $a$ minimize $H_\ep^N$ in $M_{\ep,N}^*$.
Then we deduce from \eqref{WHrelation}, \eqref{Phi-sumB} and \eqref{Hneg} that 
\be\label{C7}
W^N(a)\le  C_4 \hex^2, \qquad\qquad C_4 = C_4(\Omega).
\ee 
According to Lemma 14 in \cite{JSp2},  
there exists $u_a\in H^1(\Omega)$
such that 
\be\label{Jacua}
\left\| Ju_a - \pi\sum_{i=1}^N \delta_{a_i} \right\|_{W^{-1,1}}\le CN\ep\left( 1 + \ep N^2\rho_{a}^{-2}\right)
\ee
and
\[ 
\Sigma_\ep^{BBH}(u_a,a) = 
E_\ep(u_a)   - \kappa_\ep^{GL} - W^N(a) \le  CN\ep^2\rho_{a}^{-2} .
\] 
In particular, it follows from the above and \eqref{C7} that
\be\label{C8}
E_\ep(u_a)\le C_5\hex^2
\ee
for $C_5$ depending only on $\Omega$.
It then follows from \eqref{K2} that 
\be
\varpi(u_a)\le \frac \tep{6}.
\label{varpiu}
\ee

Now let $A_a$ minimize $A\mapsto GL_\ep(u_a,A)$
among all competitors satisfying the Coulomb gauge condition \eqref{coulomb}.
It follows that for $\ep<\ep_0$ with $\ep_0$ is small enough, then
\[ 
(u_a, A_a)\in \calA_\ep^N, \qquad\qquad \Sigma_\ep^{BBH}(u_a,a)\le  \frac\tep 6
\] 
Then we infer from the above and  Lemma \ref{L.reduction} that
the minimizer $(u_\ep,A_\ep)$ satisfies
\begin{align*}
GL_\ep(u_\ep,A_\ep) 
-\min_{M_{\ep,N}}H_\ep^N - \kappa_\ep^{GL}&\le
GL_\ep(u_a,A_a) 
-\min_{M_{\ep,N}}H_\ep^N - \kappa_\ep^{GL} \\
&=
GL_\ep(u_a,A_a) 
-H_\ep^N(a) - \kappa_\ep^{GL}
\\
&=
\Sigma_\ep^{GL}(u_a, A_a, a)
\\ 
&\le \Sigma_\ep^{BBH}(u_a, a) +\varpi (u_a)\le \frac \tep{3}\, ,
\end{align*}
proving \eqref{GLupper}.

{\em Step 2}. From \eqref{GLupper}, \eqref{Hneg}, and \eqref{Leap}, we see that
\[
E_\ep(u_\ep)\le C_6 \hex^2.
\]
Since $u_\ep\in \calA_\ep^N$, there exists 
$a_\ep\in M_{\ep, N}^*$ such that \eqref{close} holds.
We have arranged above that the other hypotheses \eqref{loc.h1},
\eqref{Ebds} of Proposition \ref{P.localization} hold.
Also, it follows from Step 1 above that
\[
\widetilde \Sigma_\ep^{GL}(u_\ep, A_\ep, a_\ep)
=
\Sigma_\ep^{GL}(u_\ep, A_\ep, a_\ep)
+\varpi(u_\ep)
\le \frac \tep2.
\]
Thus Proposition \ref{P.localization} (see in particular \eqref{p2.c1})
and \eqref{K2} yield $\xi\in \Omega^N$ such that 
\eqref{xi.vnear} is satisfied.
It follows from this and \eqref{close}
that 
\be
\|\pi \sum_{i=1}^N( \delta_{a_i} - \delta_{\xi_i})\|_{\dot W^{-1,1}} \le \frac 32 \sigma_\ep \overset{\eqref{sigmaep.def}} \le  \frac \pi2\hex^{-1/3} \quad\mbox{ for }0<\ep <\ep_0,
\label{xineara}\ee
and this and \eqref{contain0} imply that 
\be
\xi\in M_{\ep, N}, \qquad\mbox{ or in other words $d(\xi_i)  = \dist(\xi_i, \partial \Omega)\ge \hex^{-1/3}$ for every $i$.}
\label{xi.bdy}\ee
To see this, assume toward a contradiction that
$d(\xi_i)< \hex^{-1/3}$ for some $i$,
and define
\[
f(x) := \begin{cases}
0&\mbox{ if }d(x)\le \hex^{-1/3}, \\
d(x) - \hex^{-1/3} &\mbox{ if }\hex^{-1/3} \le 2\hex^{-1/3}, \\
\hex^{-1/3}&\mbox{ if }d(x)\ge 2\hex^{-1/3} . 
\end{cases}
\]
It follows from \eqref{contain0} and the assumption that $d(\xi_i)< \hex^{-1/3}$ for some $i$,
\[
\pi \int_\Omega f(x) \sum_i \delta_{a_i} = N \pi \hex^{-1/3}, \qquad
\pi \int_\Omega f(x) \sum_i \delta_{\xi_i} \le  (N-1) \pi \hex^{-1/3}.
\]
On the other hand,  from \eqref{xineara} and the construction of $f$, it is evident that
\[
\left|\pi \int_\Omega f(x) \sum_i (\delta_{a_i} - \delta_{\xi_i})\right| \le 
\|\pi \sum_{i=1}^N( \delta_{a_i} - \delta_{\xi_i})\|_{\dot W^{-1,1}} 
\| f\|_{\dot W^{1,\infty}} \le \frac \pi 2 \hex^{-1/3}.
\]
(Here we are using the same convention as in \cite{JSp2}, which is that  
$\| f\|_{\dot W^{1,\infty}} =  \|\nabla f\|_\infty\|$, for $f$ vanishing on $\partial \Omega$.)
This is a contradiction, proving \eqref{xi.bdy}, and completing the proof of 
\eqref{xi.vnear}.

{\em Step 3}. Finally, \eqref{Hxi.upper} follows immediately from Proposition
\ref{P.gstab}, in view of \eqref{e0last}. (Note that the hypotheses of the proposition
are satisfied due to \eqref{xi.vnear} and the choice of $\ep_0, k_1 $ {\em etc.}.)

\end{proof}


\end{document}